\numberwithin{equation}{section}
\numberwithin{equation}{section}		
\numberwithin{figure}{section}			
\numberwithin{table}{section}				
\newcommand{\R}{\mathbb{R}}
\newcommand{\N}{\mathbb{N}}
\DeclareFontShape{T1}{lmr}{b}{sc}{<->ssub*cmr/bx/sc}{}
\DeclareFontShape{T1}{lmr}{bx}{sc}{<->ssub*cmr/bx/sc}{}
\newcommandx{\unsure}[2][1=]{\todo[linecolor=red,backgroundcolor=red!25,bordercolor=red,#1]{#2}}
\newcommandx{\change}[2][1=]{\todo[linecolor=blue,backgroundcolor=blue!25,bordercolor=blue,#1]{#2}}
\newcommandx{\info}[2][1=]{\todo[linecolor=OliveGreen,backgroundcolor=OliveGreen!25,bordercolor=OliveGreen,#1]{#2}}
\newcommandx{\improvement}[2][1=]{\todo[linecolor=black,backgroundcolor=black!25,bordercolor=black,#1]{#2}}
\newcommandx{\thiswillnotshow}[2][1=]{\todo[disable,#1]{#2}}
\crefname{proposition}{Proposition}{Propositions}
\crefname{equation}{}{}
\newtheorem{theorem}{Theorem}[section]
\newtheorem{lemma}[theorem]{Lemma}
\newtheorem{proposition}[theorem]{Proposition}
\newtheorem{corollary}[theorem]{Corollary}
\theoremstyle{definition}
\newtheorem{definition}[theorem]{Definition}
\crefname{assumption}{Assumption}{Assumptions}
\crefname{definition}{Definition}{Definitions}
\crefname{corollary}{Corollary}{Corollaries}
\crefname{enumi}{item}{items}
\DeclareMathOperator{\Z}{\mathbb{Z}}
\DeclareMathOperator{\Q}{\mathbb{Q}}
\renewcommand{\i}{\mathrm{i}}
\renewcommand{\d}{\ \mathrm{d}}
\renewcommand{\epsilon}{\varepsilon}
\renewcommand{\i}{\mathrm{i}}
\begin{document}

\title[Title
]{Title}

\title[Convergence of supercell and superspace methods]{Convergence of supercell and superspace methods for computing spectra of quasiperiodic operators}

 \author[B. Davies]{Bryn Davies}
\address{\parbox{\linewidth}{Bryn Davies\\
Mathematics Institute, University of Warwick, Coventry CV4 7AL, UK}}
\email{bryn.davies@warwick.ac.uk}
\thanks{}

\author[C. Thalhammer]{Clemens Thalhammer}
 \address{\parbox{\linewidth}{Clemens Thalhammer\\
 ETH Z\"urich, Department of Mathematics, Rämistrasse 101, 8092 Z\"urich, Switzerland.}}
\email{cthalhammer@student.ethz.ch}

\begin{abstract}
We study the convergence of two of the most widely used and intuitive approaches for computing the spectra of differential operators with quasiperiodic coefficients: the supercell method and the superspace method. In both cases, Floquet-Bloch theory for periodic operators can be used to compute approximations to the spectrum. We illustrate our results with examples of Schr\"odinger and Helmholtz operators.
\end{abstract}

\maketitle

\vspace{3mm}
\noindent
\textbf{Keywords.} quasicrystal, cut and project, fractal spectrum, Cantor set, Fibonacci tiling, almost Mathieu operator\\

\section{Introduction}

Quasicrystals have long been discussed as an exciting new direction for photonic crystals and metamaterials, as their self-similar but aperiodic geometries lead to exotic wave-structure interactions beyond those possible with either periodic or random materials \cite{zolla1998remarkable, Simon1982}. The source of this excitement is mostly the exotic spectral properties that can be exhibited by differential operators with quasiperiodic coefficients, such as fractal and Cantor spectra \cite{Simon1982, avila2009ten}, metal-insulator transitions \cite{jitomirskaya1999metal} and unexpected symmetries in reciprocal space \cite{shechtman1984metallic}. There is also interest in their effective properties \cite{bouchitte2010homogenization, wellander2018two}, such as the ability to achieve isotropy in lattice structures \cite{chen2020isotropic} and negative refraction \cite{morini2019negative}. These properties have, so far, been put to use in applications such as building wave guides \cite{davies2022symmetry, beli2023interface} and rainbow devices \cite{davies2023graded}.

The difficulty with working with differential operators with quasiperiodic coefficients  is that their spectra are notoriously hard to compute. This is the main barrier to the wider uptake of quasicrystalline metamaterials for applications. While some efficient and error-controlled algorithms have been developed , \emph{e.g.} \cite{Colbrook2019, johnstone2022bulk}, the relative challenge of understanding and implementing these new algorithms has meant that their usage in the applied physical community has been limited. Instead, researchers have preferred simpler, intuitive approaches, even if their convergence properties are less well known. 

The most widely used approach in the physical literature for approximating the spectra of non-periodic systems is the \emph{supercell method}. This method constructs periodic versions of the operator's coefficients by taking a large, finite-sized section of the domain and repeating those values of the coefficients periodically. This is convenient as the periodic version of the operator has a well-behaved spectrum, composed of a countable union of continuous spectral bands, which can be computed easily using Floquet-Bloch theory \cite{kuchment2016overview}. As a result, the supercell method underpins many of the breakthroughs in quasiperiodic metamaterials, see \emph{e.g.} \cite{chan1998photonic, florescu2009complete, davies2022symmetry, morini2019negative, davies2023graded, hamilton2021effective}.

An intuitive conjecture about the supercell method is that better approximations can be obtained by increasing the size of the unit cell. However, in what sense the resulting sequence of spectra converges to that of the limiting quasiperiodic operator is far from obvious. The sequence of Floquet-Bloch spectra are each composed of a countable union of spectral bands. Conversely, the limiting spectra can, in general, have many different exotic properties, such as the gaps being dense and the spectrum being a Cantor set \cite{Simon1982}. Thus, it's apparent that taking this limit involves nontrivial behaviour. Fortunately, there is an established theory for approximating almost periodic functions by periodic functions \cite{Shubin1978}, which can be used to establish spectral convergence results \cite{Avron1990, Damanik2016}.

Although the behaviour of the sequence of approximate spectra obtained from supercell approximations with increasingly large unit cells is difficult to understand in general, it has been observed that many of the main spectral gaps emerge relatively early in the sequence and persist. These were coined \emph{super band gaps} by \cite{Morini2018} and have been studied in several settings. For example, they were recently proved to exist in one-dimensional systems based on Fibonacci-tilings \cite{davies2023super} and have been observed in two-dimensional lattices based on Penrose tilings \cite{damanik2023discontinuities}. The convergence theory discussed in this work shows that when a super band gap exists in the sequence of supercell approximations, this is guaranteed to correspond to a gap in the spectrum of the limiting quasiperiodic operator.

The other approach explored in this work is the \emph{superspace method}. This exploits the fact that, although quasicrystals are generally non-periodic, they have underlying periodicity in the sense that they can be obtained by an incommensurate projection from a higher-dimensional periodic space \cite{janot2012book}. Using this approach to compute spectra was proposed by \cite{Rodriguez2008}, although they observed spectral pollution by spurious eigenvalues in many of the spectral gaps and it remains to develop a convergence theory for the method. The superspace principle has also been used successfully by others, to perform direct numerical simulations of forced problems \cite{amenoagbadji2023wave} and to derive effective medium theories through asymptotic homogenisation \cite{bouchitte2010homogenization, wellander2018two}.

Particularly in applied physical and engineering settings, many researchers have overcome the challenges of computing spectra of quasicrystals by resorting to only ever simulating finite-sized structures. In this case, one can either compute the countable collection of eigenvalues \cite{xia2020topological, kraus2012topological, MartSabat2021, ni2019observation} or estimate a local density of states (LDOS) \cite{della2005band, asatryan2001two}. These approaches typically give reliable predictions of the experimental behaviour of devices. In certain cases (especially those which are periodic), the spectra of finite-sized systems are known to converge to their infinite counterparts \cite{ammari2023spectral}, so it's a reasonable expectation that there should be some correspondence between the spectra of quasiperiodic operators on finite and infinite domains. However, spectra are often polluted by edge modes, which need to be properly understood, and it is not always clear how properties will scale or generalise. Further, these methods will never capture the unique mix of long-range aperiodicity and self-similarity present in quasicrystals. The motivation for developing rigorous convergence theories for supercell and superspace methods is that these approaches would facilitate more systematic investigation of quasiperiodic metamaterials, analogous to how any study of periodic systems typically begins by examining the spectral band diagram to understand the system's properties.

We will begin by recalling some mathematical preliminaries in Section~\ref{sec:prelims}, such as the required background on almost periodic functions and tiling rules, before examining the supercell and superspace methods in Sections~\ref{sec: supercell} and ~\ref{sec: superspace}, respectively. Finally, in Section~\ref{sec:interface} we will show how our theory can be applied to study localised interface modes in quasicrystals with defects; we prove the existence of such modes and give estimates for their eigenfrequencies and (exponential) decay rates.

\section{Mathematical preliminaries} \label{sec:prelims}

\subsection{Problem setting}
Suppose we want to compute the spectrum of a self-adjoint elliptic operator $A$ with quasiperiodic coefficients of the form
\begin{equation}\label{eq: operator_definition}
    A= \sum_{k}a_k(x) \frac{d^k}{dx^k},
\end{equation}
where the coefficient $a_k(x)$ are assumed to be quasiperiodic and smooth. By \cite{Shubin1978}, this amounts to finding all pairs $(\lambda, u) \in \R \times \mathcal{C}^\infty_b(\R)$ such that
\begin{equation}\label{eq: eigenvalue_problem}
    Au = \lambda u.
\end{equation}
If the coefficient $a_k$ are periodic, then Floquet-Bloch theory applies and \eqref{eq: eigenvalue_problem} can be reduced to a collection of eigenvalue problems for $(\lambda_\alpha, u_\alpha) \in \R \times \mathcal{C}^\infty((0,T))$ given by
\begin{equation}\label{eq: eigenvalue_problem_periodic}
    \begin{cases}
        Au_\alpha = \lambda_\alpha u_\alpha\\
        u_\alpha(T) = e^{\i\alpha T}u_\alpha(0),
    \end{cases}
\end{equation}
where $T$ is the length of the periodic and $\alpha \in [0, \frac{2\pi}{T})$ is the quasi-momentum. From a computational point of view, \eqref{eq: eigenvalue_problem_periodic} is much simpler than \eqref{eq: eigenvalue_problem}, as elliptic operators on compact domains have a countable set of eigenvalues. While integral decomposition techniques do exist for quasiperiodic (or almost periodic) differential operators \cite{Bellissard:1981ju}, the resulting domains are still infinite so Floquet-Bloch is not directly applicable. Related to this, the spectrum can take on exotic forms. For example, there are instances of elliptic differential operators with almost periodic coefficients whose spectrum is Cantor-like \cite{Moser1981}.

We will consider eigenvalue problems \eqref{eq: eigenvalue_problem} for operators of the form \eqref{eq: operator_definition} with coefficients $a_k(x)$ that are smooth and are either quasiperiodic (in the sense of Definition~\ref{defn:quasiperiodic}) or formed by a tiling rule (as introduced in Section~\ref{subsec: tilings}). In Section~\ref{sec: supercell}, we will consider the supercell method. We will prove how the Floquet-Bloch spectra converge to the spectrum of the limiting operator in the limit of large unit cell length. We will present versions of these results for the specific case of tiling-based operators and present numerical examples for Schr\"odinger and Helmholtz-type operators. In Section~\ref{sec: superspace}, we will consider the superspace method, as proposed in \cite{Rodriguez2008}. We will show that the lifted operator has the same spectrum and present some numerical examples. These examples predict spectra that agree with the supercell predictions, up to some spectral pollution by some spurious eigenvalues. This spectral pollution was also observed in \cite{Rodriguez2008}; we show how it depends on the choice of discretisation (and is a particularly troublesome issue in the case of the plane wave expansion method).

\subsection{Almost periodic functions}
In this section we will provide a brief introduction to almost periodic functions and operators. For a more thorough overview, see \cite{Shubin1978,Simon1982,Corduneanu2009}. The most straightforward way to introduce almost periodic functions is through trigonometric polynomials. We define $\text{Trig}(\R^n)$ as the space of trigonometric polynomials over $\R^n$, i.e. functions that are finite linear combinations of $\exp (\i\langle\lambda, x\rangle)$, where $\lambda \in \R^n$.
\begin{definition}
    We define the set of uniformly almost periodic function $CAP(\R)$ as the closure of $\text{Trig}(\R^n)$ with respect to the supremum norm. We further denote $CAP^k(\R^n) = CAP(\R^n) \cap \mathcal{C}^k(\R^n)$, equipped with the $\mathcal{C}^k$ norm.
\end{definition}
The following Theorem from \cite[Theorem~2.3]{Shubin1978} highlights an important property of almost periodic functions: that they can be approximated arbitrarily well by periodic functions on compact domains.
\begin{theorem}\label{thrm: periodic_approximation}
    Let $\mathcal{F}$ be a precompact subset of $CAP^k(\R^n)$ and, for $R>0$, let $Q_R\subset \R^n$ be the set
    \begin{equation*}
        Q_R = \left\{x \in \R^n\mid 0 \leq x_i  \leq R\right\}.
    \end{equation*}
    Then, for any $\epsilon>0$, $r>0$ and $T_0>0$, there exists $T\geq T_0$ and a continuous linear operator $L$ with finite dimensional image such that $Lf$ is a $T$-periodic trigonometric polynomial and 
    $$\sup_{x \in Q_{rT}}\lvert f(x)- Lf(x)\rvert<\epsilon,$$
    for any $f \in \mathcal{F}$.
\end{theorem}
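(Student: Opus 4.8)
The plan is to build the operator $L$ as a composition $L = R_T \circ S$ of two linear maps: a \emph{uniform trigonometric approximation} $S$ that replaces $f$ by a trigonometric polynomial whose frequencies lie in a fixed finite set, and a \emph{commensuration} $R_T$ that nudges those finitely many frequencies onto the lattice $\tfrac{2\pi}{T}\Z^n$ so that the output becomes $T$-periodic. The three ingredients are total boundedness of $\mathcal F$, a norm-$\le 1$ almost-periodic summation method (Bochner–Fejér means), and simultaneous Diophantine approximation. The role of the growing domain $Q_{rT}$ is essential: we never try to approximate $f$ by a periodic function globally (impossible for genuinely aperiodic $f$), only on a box whose size scales with the period $T$.

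First I would produce $S$. Fix a finite $\delta$-net $g_1,\dots,g_N$ of $\mathcal F$ consisting of trigonometric polynomials (possible since $\mathcal F$ is totally bounded and $\mathrm{Trig}(\R^n)$ is dense), and let $\Gamma$ be the finitely generated $\Z$-module spanned by the frequencies of $g_1,\dots,g_N$. Bochner–Fejér theory then supplies linear operators $\sigma_m$, each a convolution against a positive almost-periodic kernel of unit mean with frequencies in $\Gamma$, so that $\lVert\sigma_m\rVert\le 1$ and $\sigma_m g_i \to g_i$ uniformly for each of the finitely many net points $g_i$. Taking $m$ large and using $\lVert\sigma_m\rVert\le 1$ in the estimate $\lVert f-\sigma_m f\rVert \le \lVert f-g_i\rVert + \lVert\sigma_m g_i-g_i\rVert + \lVert\sigma_m(g_i-f)\rVert$ gives $\sup_{f\in\mathcal F}\lVert f-\sigma_m f\rVert < \epsilon/2$. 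Setting $S:=\sigma_m$ yields a continuous linear operator of finite rank whose output frequencies form a finite set $\Lambda=\{\lambda_1,\dots,\lambda_M\}\subset\Gamma$, with $Sf = \sum_j w_j\,\hat f(\lambda_j)\,e^{\i\langle\lambda_j,x\rangle}$ for weights $w_j\in[0,1]$ and Bohr–Fourier coefficients $\hat f(\lambda_j)$.

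Next I would commensurate the frequencies. For $T>0$ define $R_T$ on the image of $S$ by $e^{\i\langle\lambda_j,x\rangle}\mapsto e^{\i\langle\lambda_j',x\rangle}$, where $\lambda_j' := \tfrac{2\pi}{T}m_j$ and $m_j\in\Z^n$ is the nearest lattice point to $\tfrac{T}{2\pi}\lambda_j$; then $L := R_T\circ S$ is linear with finite-dimensional image, and $Lf$ is a $T$-periodic trigonometric polynomial. The error splits as $\lvert f(x)-Lf(x)\rvert \le \lVert f-Sf\rVert_\infty + \lvert Sf(x)-Lf(x)\rvert$, the first term being below $\epsilon/2$. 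On $Q_{rT}$ the second term is controlled using $\lvert e^{\i\langle\lambda_j,x\rangle}-e^{\i\langle\lambda_j',x\rangle}\rvert \le \lvert x\rvert\,\lvert\lambda_j-\lambda_j'\rvert$ and $\lvert x\rvert\le rT\sqrt n$, giving the bound $\big(\sup_{f\in\mathcal F}\lVert f\rVert_\infty\big)\sum_{j=1}^{M} rT\sqrt n\,\lvert\lambda_j-\lambda_j'\rvert$, whose prefactor is finite because $\mathcal F$ is bounded.

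The crux, and the step I expect to be the main obstacle, is choosing $T$ so that this last quantity falls below $\epsilon/2$ \emph{while} $T\ge T_0$ is allowed to be arbitrarily large. Since $\lvert\lambda_j-\lambda_j'\rvert = \tfrac{2\pi}{T}\,\mathrm{dist}\!\big(\tfrac{T}{2\pi}\lambda_j,\Z^n\big)$, each summand equals $2\pi r\sqrt n\,\mathrm{dist}\!\big(\tfrac{T}{2\pi}\lambda_j,\Z^n\big)$: the factor $T$ cancels, so it suffices to make the simultaneous fractional parts of $\tfrac{T}{2\pi}\lambda_1,\dots,\tfrac{T}{2\pi}\lambda_M$ uniformly small. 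By the Kronecker–Weyl equidistribution theorem the set of $T$ for which all these fractional parts lie below any prescribed threshold is relatively dense in $(0,\infty)$, hence contains values $\ge T_0$; this is precisely where the growth of $Q_{rT}$ with $T$ is reconciled with the aperiodicity of $f$. Choosing such a $T$ completes the construction. The two subtle points to monitor are the uniformity of the approximation across the whole family $\mathcal F$ — which hinges on precompactness rather than on any single function, via the net together with the uniformly bounded Bochner–Fejér means — and the fact that only the \emph{finite} set $\Lambda$, not the full (possibly incommensurate) spectrum of $\mathcal F$, needs to be approximated by the lattice.
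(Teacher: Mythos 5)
The paper does not prove this result itself --- it is quoted verbatim from Shubin (Theorem~2.3 of the cited survey), so there is no in-paper proof to compare against; your argument is, however, essentially the standard proof of that theorem (Bochner--F\'ejer summation to reduce to a fixed finite frequency set, then simultaneous commensuration of those frequencies), and it is correct: the linearity of $L$ is properly secured by using the summation operator rather than a nearest-net-point assignment, the cancellation of $T$ in $rT\cdot\lvert\lambda_j-\lambda_j'\rvert = 2\pi r\,\mathrm{dist}\bigl(\tfrac{T}{2\pi}\lambda_j,\Z^n\bigr)$ is the right key observation, and the existence of arbitrarily large admissible $T$ follows because the set of $T$ with all $\lVert T\lambda_{j,i}/2\pi\rVert$ small is relatively dense. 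The only cosmetic imprecision is attributing that last fact to ``Kronecker--Weyl equidistribution''; what you actually need (and what is true) is the relative density of common $\epsilon$-almost periods of the finitely many exponentials $e^{\i T\lambda_{j,i}}$, i.e.\ Bohr's recurrence statement rather than equidistribution of the orbit.
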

Many fundamental results about almost periodic functions rely on the fact they can be approximated by periodic functions, as stated in Theorem \ref{thrm: periodic_approximation}, and it is the foundation of the supercell method considered here. 

To define the equivalent of $L^p$ spaces for almost periodic functions, we will first need to introduce the mean value functional.
\begin{definition}\label{prop: mean_value}
    For $f \in CAP(\R^n)$, we define the mean value of $f$ by
    \begin{equation*}
        M[f] = \lim_{R \rightarrow\infty} \frac{1}{R^n} \int_{Q_R + s} f(x) \d x,
    \end{equation*}
    where $s \in \R^n$ can be chosen arbitrarily.
\end{definition}
\begin{definition}\label{def: besicovitch_space}
    The Besicovitch space $B^2(\R^n)$ is the closure of the characters $CAP(\R^n)$ with respect to the inner product given by
    \begin{equation*}
        \langle f,g\rangle_B := M[f\Bar{g}].
    \end{equation*}
\end{definition}
From Theorem~\ref{thrm: periodic_approximation} and Definition~\ref{def: besicovitch_space}, it follows that the set $\{\exp(\i\langle \gamma, x\rangle )\}_{\gamma \in \R^n}$ forms an orthonormal basis of $B^2(\R^n)$. Given $f\in B^2(\R^n)$, we can associate to $f$ a generalised Fourier series
\begin{equation*}
    f(x) = \sum_\gamma f_\gamma e^{\i\langle \gamma, x \rangle},
\end{equation*}
where $ f_\gamma = M[fe^{-\i\langle \gamma, x \rangle}]$. We now make a definition of what it means for a function in $B^2(\R^n)$ to be quasiperiodic.

\begin{definition} \label{defn:quasiperiodic}
    We say that a function $f\in B^2(\R^n)$ is quasiperiodic if there exists some 
    $F \in L^2(\mathbb{T}^m)$, with $m>n$, and some $\zeta_1, \dots, \zeta_m \in \R^n$ which are linearly independent over $\Q$ such that 
    \begin{equation}\label{eq: quasiperiodic}
        f(x):= F(\langle x,\zeta_1\rangle,\langle x,\zeta_2\rangle, \dots,\langle x,\zeta_m\rangle).
    \end{equation}
\end{definition}

\begin{proposition}\label{prop: qp_mean}
    Let $F \in C_b(\mathbb{T}^m),m>n$ and let $\zeta_1, \dots, \zeta_m \in \R^n$ be linearly independent. Let $f$ be defined as in $\eqref{eq: quasiperiodic}$. It holds that
    \begin{equation*}
        M[f] = \int_{\mathbb{T}^m} F(y) \d y.
    \end{equation*}
\end{proposition}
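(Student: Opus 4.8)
The plan is to establish the identity by a Weyl-type equidistribution argument: first verify it on the characters of $\mathbb{T}^m$, then pass to general $F$ by density. Throughout I would write $\phi(x) = (\langle x,\zeta_1\rangle,\dots,\langle x,\zeta_m\rangle)$ for the linear map $\R^n \to \mathbb{T}^m$, so that $f = F\circ\phi$, and I would read the hypothesis that the $\zeta_i$ are ``linearly independent'' as rational independence (linear independence over $\Q$), as in Definition~\ref{defn:quasiperiodic}; this is the only sensible reading since $m>n$ precludes $\R$-linear independence. Because $F$ is continuous on the compact torus it is a uniform limit of trigonometric polynomials, so $f$ is a uniform limit of elements of $\text{Trig}(\R^n)$ and hence lies in $CAP(\R^n)$; in particular $M[f]$ is well defined.

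First I would treat the characters $F(y) = e^{\i\langle k,y\rangle}$ with $k \in \Z^m$. For $k=0$ both sides equal $1$ (using the normalised Haar measure on $\mathbb{T}^m$), so assume $k\neq 0$, in which case $\int_{\mathbb{T}^m}F\,\d y = 0$. Substituting $\phi$ gives
\[
 f(x) = \exp\Big(\i\sum_{j=1}^m k_j\langle x,\zeta_j\rangle\Big) = e^{\i\langle \eta,x\rangle}, \qquad \eta := \sum_{j=1}^m k_j\zeta_j \in \R^n .
\]
The rational independence of the $\zeta_j$ guarantees $\eta\neq 0$, which is the structural fact that makes the whole statement true. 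It then remains to show that a single nonzero frequency averages to zero. Here I would compute $M[e^{\i\langle\eta,x\rangle}]$ directly from the definition of the mean value by factorising the integral over $Q_R+s$ into one-dimensional integrals: each coordinate $l$ with $\eta_l=0$ contributes a factor of size $R$, while each coordinate with $\eta_l\neq 0$ contributes $\int e^{\i\eta_l x_l}\,\d x_l$, which is bounded uniformly in $R$ by $2/\lvert\eta_l\rvert$. Since $\eta\neq 0$ at least one coordinate is of the second type, so the full integral is $O(R^{n-1})$; dividing by $R^n$ and letting $R\to\infty$ yields $M[f]=0=\int_{\mathbb{T}^m}F\,\d y$. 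By linearity the identity then holds for every trigonometric polynomial on $\mathbb{T}^m$.

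Finally I would remove the restriction to trigonometric polynomials. Given $\epsilon>0$, Stone--Weierstrass provides a trigonometric polynomial $P$ with $\lvert\lvert F-P\rvert\rvert_\infty<\epsilon$; writing $p=P\circ\phi$, both $M[\cdot]$ and integration against the normalised Haar measure are bounded by the supremum norm, so $\lvert M[f]-M[p]\rvert\le \lvert\lvert F-P\rvert\rvert_\infty<\epsilon$ and $\lvert\int_{\mathbb{T}^m}(F-P)\,\d y\rvert<\epsilon$. Combining these with the already-proven equality $M[p]=\int_{\mathbb{T}^m}P\,\d y$ gives $\lvert M[f]-\int_{\mathbb{T}^m}F\,\d y\rvert<2\epsilon$, and letting $\epsilon\to 0$ closes the argument.

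The only genuine obstacle is the equidistribution estimate of the second step: in particular the bookkeeping of the mixed case where $\eta$ has both vanishing and non-vanishing coordinates, together with the appeal to rational independence to secure $\eta\neq 0$. Everything else is soft functional-analytic density, and the torus normalisation must simply be fixed so that $M[1]=\int_{\mathbb{T}^m}1\,\d y=1$.
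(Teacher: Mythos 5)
The paper states Proposition~\ref{prop: qp_mean} without proof, so there is no in-paper argument to compare against; judged on its own, your proposal is correct and complete. The strategy you use --- verify the identity on the characters $e^{\i\langle k,y\rangle}$ of $\mathbb{T}^m$, where rational independence of the $\zeta_j$ forces the pulled-back frequency $\eta=\sum_j k_j\zeta_j$ to be nonzero for $k\neq 0$, compute $M[e^{\i\langle\eta,x\rangle}]=0$ by factorising the integral over $Q_R+s$ into one-dimensional integrals and noting that at least one factor stays bounded while the others grow like $R$, and then pass to general $F$ by Stone--Weierstrass together with the sup-norm bounds on both functionals --- is the standard Weyl-equidistribution route to this statement, and each step is carried out correctly. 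Your reading of ``linearly independent'' as independence over $\Q$ is the right one: it matches Definition~\ref{defn:quasiperiodic} and is forced by $m>n$, and you correctly isolate it as the only place the hypothesis enters. The single point worth writing out more carefully in a final version is the well-definedness of $M[f]$: rather than first invoking the general $CAP$ theory for existence and then estimating, you can observe that the uniform bound $\lvert R^{-n}\int_{Q_R+s}(f-p)\d x\rvert\le\lVert F-P\rVert_\infty$ shows the limit superior and limit inferior of $R^{-n}\int_{Q_R+s}f\d x$ differ by at most $2\lVert F-P\rVert_\infty$, so the same approximation argument delivers existence of the limit and its value simultaneously. This is a presentational refinement, not a gap.
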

As an example, consider the case $m = 2, n = 1$. We take $\zeta_1 = 1$ and $\zeta_2 = \frac{1 + \sqrt{5}}{2}$. Then, since $\zeta_1$ and $\zeta_2$ are linearly independent over $\Q$, the slice along the vector $(\zeta_1,\zeta_2)^\top$ densely fills $\mathbb{T}^m$ \cite{Rodriguez2008}. This is depicted in Figure \ref{fig: unit_cell_filled}. This, coupled with Proposition~\ref{prop: qp_mean}, serves as the inspiration for the superspace method.

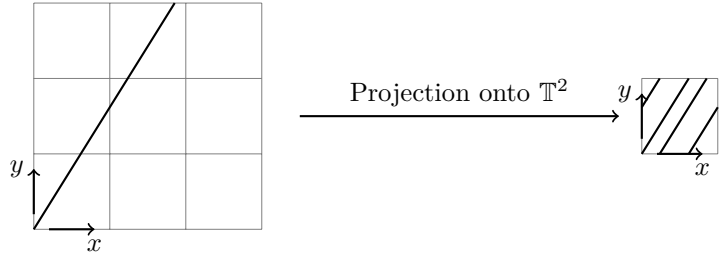
\begin{figure}
    \centering
    \begin{tikzpicture}

\draw[step=1cm,gray,very thin] (0,0) grid (3,3);
\draw[thick] (0,0) -- (3/1.618,3); 
\draw[->, thick] (0.2,0) -- (0.8,0) node[anchor=north] {$x$};
\draw[->, thick] (0,0.2) -- (0,0.8) node[anchor=east] {$y$};

\begin{scope}[shift={(8,1)}]
    \draw[step=1cm,gray,very thin] (0,0) grid (1,1);
    \draw[thick] (0,0) -- (1/1.618,1); 
    \draw[thick] (0.61804697157,0) -- (1,0.618); 
    \draw[thick] (0,0.618) -- (0.23609394314,1); 
    \draw[thick] (0.23609394314,0) -- (0.85414091471,1); 
    \draw[->, thick] (0.2,0) -- (0.8,0) node[anchor=north] {$x$};
\draw[->, thick] (0,0.2) -- (0,0.8) node[anchor=east] {$y$};
    
\end{scope}

\draw[->, thick] (3.5,1.5) -- (7.7,1.5) node[midway,above] {Projection onto $\mathbb{T}^2$};

    \end{tikzpicture}
    \caption{Schematic of the slice along $(\zeta_1,\zeta_2)^\top$ filling the unit cell densely}
    \label{fig: unit_cell_filled}
\end{figure}

An important result of \cite{Shubin1978} that will be important in Section \ref{sec: superspace} is the following:
\begin{theorem}\label{thrm: spectrum_equivalence}
    Let $A$ be a self-adjoint elliptic operator of the form \eqref{eq: operator_definition}. Denote $\sigma_B(A)$ the spectrum of $A$ as an operator on $CAP(\R^n)$ and $\sigma(A)$ the spectrum as an operator on $C^\infty_c(\R^n)$. It holds that
    \begin{equation*}
        \sigma_B(A) = \sigma(A).
    \end{equation*}
\end{theorem}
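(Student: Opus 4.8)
The plan is to use self-adjointness to reduce both spectra to approximate point spectra and then compare Weyl sequences. I would realise the almost periodic operator on the Besicovitch Hilbert space $B^2(\R^n)$, where self-adjointness makes sense and $\sigma_B(A)$ is defined, and realise $\sigma(A)$ on $L^2(\R^n)$ as the closure of $A$ from $C^\infty_c(\R^n)$. For a self-adjoint operator, $\lambda$ lies in the spectrum exactly when it admits a Weyl sequence, i.e. a normalised sequence $(u_j)$ in the domain with $\|(A-\lambda)u_j\|\to 0$. It therefore suffices to convert a Weyl sequence in one space into a Weyl sequence in the other, which I would do by proving the two inclusions separately.

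For $\sigma_B(A)\subseteq\sigma(A)$ I would localise. Take a Weyl sequence $(v_j)\subset B^2(\R^n)$; by density we may assume each $v_j$ is a smooth trigonometric polynomial, so it is bounded together with its derivatives. Fix the rescaled cutoff $\chi_R(x)=\chi(x/R)$, with $\chi=1$ on $Q_1$ and supported in $Q_2$, and set $w_j=\chi_{R}v_j\in L^2(\R^n)$. Because $v_j$ is almost periodic, $\frac{1}{R^n}\int_{Q_R}\abs{v_j}^2\to\|v_j\|_{B^2}^2$, so $\|w_j\|_{L^2}$ grows like $R^{n/2}$; meanwhile $(A-\lambda)w_j=\chi_R(A-\lambda)v_j+[A,\chi_R]v_j$, where every derivative of $\chi_R$ is $O(1/R)$ on a region of volume $O(R^n)$, so the commutator has $L^2$-norm of order $R^{(n-2)/2}$, one power of $R$ below the bulk. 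Letting $j\to\infty$ and $R=R_j\to\infty$ slowly, the bulk term is small since $(v_j)$ is a Besicovitch Weyl sequence and the commutator is negligible after normalisation, producing an $L^2$ Weyl sequence.

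For the reverse inclusion $\sigma(A)\subseteq\sigma_B(A)$ I would start from an $L^2$ Weyl sequence $(u_j)$, which by elliptic regularity and a cutoff may be taken smooth with $\operatorname{supp}u_j\subset Q_{\rho_j}$ and with all Sobolev norms up to the order of $A$ controlled by $\lambda$. The aim is to spread $u_j$ into an almost periodic approximate eigenfunction. Almost periodicity of the coefficients supplies, for each $\epsilon$, a relatively dense set of near-periods $\tau$ with $\sup_x\abs{a_k(x+\tau)-a_k(x)}<\epsilon$; the identity $(A-\lambda)[u_j(\cdot-\tau)]=\big((A-\lambda)u_j\big)(\cdot-\tau)+\sum_k\big(a_k-a_k(\cdot-\tau)\big)u_j^{(k)}(\cdot-\tau)$ then shows each translate is still an approximate eigenfunction up to $O(\epsilon)$. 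Using Theorem~\ref{thrm: periodic_approximation} to replace $A$ by a $T$-periodic operator agreeing with it to within $\epsilon$ on $Q_{rT}$ (with $T\gg\rho_j$), one periodises $u_j$ with period $T$, obtaining a $T$-periodic, hence almost periodic, function whose Besicovitch norm is positive and whose residual is controlled by $\|(A-\lambda)u_j\|$ plus $O(\epsilon)$; sending $\epsilon\to0$ and $T\to\infty$ yields a Weyl sequence in $B^2(\R^n)$.

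The main obstacle is this second inclusion. Collapsing an almost periodic function to $L^2$ by a cutoff is essentially free, but inflating a localised $L^2$ eigenfunction into a globally almost periodic one forces the coefficients to recur, which almost periodicity guarantees only up to $\epsilon$ and only along a relatively dense set of translates. The delicate points are quantifying how the residual $\|(A-\lambda)u_j\|$ degrades under translation by an imperfect period, keeping the superposed or periodised copies from interfering, and checking that the resulting function has nonvanishing Besicovitch norm after normalisation; it is precisely here that the uniform control of Theorem~\ref{thrm: periodic_approximation} and the mean-value structure of $B^2(\R^n)$ must be combined with care.
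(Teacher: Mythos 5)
The paper does not actually prove this statement; it is quoted from \cite{Shubin1978}, where the equality of the two spectra is obtained from the calculus of almost periodic pseudodifferential operators --- the key point being that an almost periodic operator of order zero has the same operator norm on $L^2(\R^n)$ as on $B^2(\R^n)$, so that invertibility of $A-\lambda$ transfers between the two realisations. Your first inclusion $\sigma_B(A)\subseteq\sigma(A)$ is a correct and standard cutoff--commutator argument: the squared $L^2$ mass of $\chi_R v_j$ scales like $R^{n}$ while the squared commutator term scales like $R^{n-2}$, so a diagonal choice $R=R_j$ after passing to trigonometric-polynomial representatives does the job.

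The gap is in the reverse inclusion, and it sits exactly where you suspected. Theorem~\ref{thrm: periodic_approximation} produces a $T$-periodic $La_k$ with $\sup_{Q_{rT}}\abs{a_k-La_k}<\epsilon$ on a \emph{single} cube; it gives no control outside $Q_{rT}$, and no global uniform approximation can exist unless $a_k$ is limit periodic (e.g.\ $\sin x+\sin\theta x$ with $\theta$ irrational is not a uniform limit of periodic functions on $\R$). Consequently, when you periodise $u_j$ with period $T$, the copy placed at $Tm$ sees the true coefficients $a_k(\cdot+Tm)$, and $(A-\lambda)[u_j(\cdot-Tm)]=((A_{Tm}-\lambda)u_j)(\cdot-Tm)$ is small only when $Tm$ is an $\epsilon$-almost period of the coefficients. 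By equidistribution the proportion of $m\in\Z^n$ with this property equals the (typically small) measure of an $\epsilon$-neighbourhood in the hull, so the Besicovitch residual $M[\abs{(A-\lambda)v}^2]$ receives $O(1)$ contributions from a positive-density set of cells and does not tend to zero; comparing with the periodised operator $A_T$ instead does not help, since $A-A_T$ is not small on $B^2$ for the same reason. The natural repair --- placing in the cell at $Tm$ an approximate eigenfunction of the translated operator $A_{Tm}$, which exists because $\sigma(A_{Tm})=\sigma(A)$ by the hull invariance stated after the theorem --- yields a superposition whose residual is small cell by cell, but that function is no longer periodic and there is no reason for it to belong to $B^2(\R^n)$, which is the closure of the trigonometric polynomials and not the set of bounded functions admitting a mean. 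This membership problem is the genuine obstruction, and it is precisely what the operator-algebraic argument of \cite{Shubin1978} is designed to circumvent.
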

\begin{definition}
    Let $A$ be an elliptic self-adjoint operator with almost periodic coefficients. We define
    \begin{equation*}
        H(A) = \left\{\Tilde{A}\mid \Tilde{A}(x) = \lim_k A(x + h_k)\right\},
    \end{equation*}
    where $h_k$ is a sequence such that the limit exists uniformly in x.
\end{definition}
This set is sometimes also refered to as the \textit{frequency module} of $A$. The following alternative holds \cite{Shubin1978}:
\begin{theorem}
    $A$ is invertible or there exists some $\Tilde{A} \in H(A)$ that is not injective with domain $\mathcal{C}^\infty_b(\R^n)$.
\end{theorem}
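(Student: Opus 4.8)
The plan is to prove the contrapositive: if $A$ is not invertible, I construct some $\tilde{A} \in H(A)$ with nontrivial kernel in $\mathcal{C}^\infty_b(\R^n)$. As a self-adjoint operator, $A$ fails to be invertible precisely when $0 \in \sigma_B(A)$, which by Theorem~\ref{thrm: spectrum_equivalence} agrees with $\sigma(A)$. If $A$ itself is non-injective, one takes $\tilde{A} = A \in H(A)$ and is done; otherwise $0$ lies in the continuous spectrum and Weyl's criterion supplies a sequence $(u_n) \subset \mathcal{C}^\infty_b(\R^n)$ with $M[|u_n|^2] = 1$ and $M[|A u_n|^2] \to 0$.

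The key observation for the eventual non-triviality of the kernel element is that the normalisation $M[|u_n|^2] = 1$ forces $\sup_x |u_n(x)|^2 \geq 1$ (the mean of a nonnegative function cannot exceed its supremum), so there exist translation points $h_n \in \R^n$ with $|u_n(h_n)|^2 \geq \tfrac12$. Setting $v_n(x) := u_n(x + h_n)$ and $A_n := A(\cdot + h_n)$, translation-invariance of the mean value gives $M[|A_n v_n|^2] = M[|A u_n|^2] \to 0$, while $|v_n(0)|^2 \geq \tfrac12$ pins a definite amount of mass at the origin that cannot escape as $n \to \infty$.

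Next I extract limits. Almost periodicity of the coefficients is exactly the statement that $\{A(\cdot + h)\}_{h \in \R^n}$ is precompact in the uniform topology, so a subsequence of $A_n$ has uniformly convergent coefficients with limit $\tilde{A} \in H(A)$. Because these coefficients and all their derivatives are uniformly bounded over $\R^n$, elliptic regularity provides $\mathcal{C}^k$ bounds on $v_n$ over compact sets that are uniform in $n$; Arzel\`a--Ascoli together with a diagonal argument then yield a further subsequence converging to some $v_\infty$ in $\mathcal{C}^\infty_{loc}(\R^n)$. Passing to the limit in $A_n v_n \to 0$ gives $\tilde{A} v_\infty = 0$, where $v_\infty \in \mathcal{C}^\infty_b(\R^n)$ by the uniform bounds and $v_\infty(0) \neq 0$ by the lower bound above. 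Hence $\tilde{A} \in H(A)$ is not injective on $\mathcal{C}^\infty_b(\R^n)$, completing the dichotomy.

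The main obstacle is the passage from the mean-square ($B^2$) smallness of $A_n v_n$ to the pointwise, locally uniform control needed both to apply Arzel\`a--Ascoli and to conclude $\tilde{A} v_\infty = 0$ in the classical sense. This requires elliptic estimates whose constants are uniform in space, which are available precisely because almost periodicity bounds the coefficients and their derivatives uniformly, allowing one to upgrade the $B^2$ information on $u_n$ to uniform $\mathcal{C}^k_{loc}$ bounds on the translates $v_n$. Once these uniform-in-$n$ estimates are established, the remaining steps are routine.
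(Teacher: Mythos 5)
The paper itself gives no proof of this alternative --- it is quoted directly from \cite{Shubin1978} --- so your attempt can only be measured against the standard argument. Your skeleton is the right one: pass to the contrapositive, pick concentration points $h_n$ where $\lvert u_n\rvert$ is comparable to its supremum, translate, use Bochner precompactness of the translates of the coefficients to extract a limit operator $\Tilde{A}\in H(A)$, and use uniform interior elliptic estimates plus Arzel\`a--Ascoli to extract a nonzero bounded solution of $\Tilde{A}v_\infty=0$.

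The genuine gap is the step you yourself flag as ``the main obstacle'', and it is not closed by uniform elliptic estimates. Your singular sequence is singular only in the Besicovitch norm: $M[\lvert Au_n\rvert^2]\to 0$. A vanishing mean square controls $Au_n$ on average over all of $\R^n$ but says nothing about its size near any prescribed point; for instance $g_n(x)=n^{-1}\sum_{k\leq n}e^{\i\gamma_k x}$ with rationally independent $\gamma_k$ has $M[\lvert g_n\rvert^2]=1/n\to 0$ while $g_n(0)=1$ for every $n$. Since you choose $h_n$ solely to make $\lvert u_n(h_n)\rvert$ large, nothing prevents $Au_n$ from being large precisely near those points, in which case $A_nv_n\not\to 0$ on compact sets and the limit equation $\Tilde{A}v_\infty=0$ fails. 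A pigeonhole repair (intersecting the relatively dense set of near-maximisers of $\lvert u_n\rvert$ with the set of centres around which $Au_n$ has small local $L^2$ mass) is delicate, because the relative-density constants of $u_n$ depend on $n$ and need not beat $M[\lvert Au_n\rvert^2]$. A second, smaller gap of the same origin: $M[\lvert u_n\rvert^2]=1$ gives $\sup\lvert u_n\rvert\geq 1$ but no upper bound, so the uniform $\mathcal{C}^k_{loc}$ bounds needed for Arzel\`a--Ascoli and for $v_\infty\in\mathcal{C}^\infty_b(\R^n)$ are not available (this one is fixable by renormalising by $\lVert u_n\rVert_\infty$). The standard way to close both gaps at once is to produce the singular sequence in the \emph{uniform} norm, $\lVert u_n\rVert_\infty=1$ and $\lVert Au_n\rVert_\infty\to 0$; this is exactly what Shubin's equivalence-of-spectra machinery on the $\mathcal{C}_b$/H\"older scale supplies, and with sup-norm smallness the passage to the limit is immediate and the rest of your argument goes through verbatim. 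As written, the reduction to Weyl's criterion in $B^2$ does not deliver the local control that the concentration-compactness part of your proof needs.
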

In particular, this implies $\sigma(A) = \sigma(\Tilde{A})$ for all $\Tilde{A} \in H(A)$.

\subsection{Continued fractions} \label{sec:contfrac}

Deciding how to approximate an aperiodic function by a sequence of periodic versions will often be equivalent to approximating an irrational number by a sequence of rational numbers. A natural way to achieve this is by truncating the continued fraction representation of a number. To this end, we briefly recall the theory of continued fractions, and refer the reader to \cite{Khinchin1997} for more details. 

A \emph{simple continued fraction} is an expression of the form
\begin{equation} \label{eq:contfrac}
    a_0+ \frac{1}{a_1+\frac{1}{a_2+\dots}},
\end{equation}
where $a_0,a_1,a_2,\dots$ are positive integers, referred to as the \emph{elements} of the continued fraction. The sequence of elements can be either infinite or finite. When it is finite, we call \eqref{eq:contfrac} a \emph{finite} continued fraction. It turns out that every real number can be written uniquely in this form, as shown in \cite[Theorem~14]{Khinchin1997}:

\begin{theorem}
    To every real number $a$ there corresponds a unique continued fraction
with value equal to $a$. Furthermore, this continued fraction is finite if and only if $a$ is rational.
\end{theorem}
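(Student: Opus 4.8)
The plan is to proceed via the \emph{continued fraction algorithm}, which for a real number $a$ generates the elements by repeatedly separating integer and fractional parts. First I would set $\xi_0 = a$, $a_0 = \lfloor \xi_0 \rfloor$, and then, whenever $\xi_k$ is not an integer, define $\xi_{k+1} = 1/(\xi_k - a_k)$ and $a_{k+1} = \lfloor \xi_{k+1} \rfloor$; the algorithm halts precisely when some $\xi_k$ is an integer. Since $\xi_k - a_k \in [0,1)$, each reciprocal $\xi_{k+1}$ exceeds $1$, so $a_k \geq 1$ for every $k \geq 1$ and the output is a genuine continued fraction of the required form. Establishing existence of a representation then amounts to showing that, whether or not the algorithm terminates, the associated expression has value $a$.

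Next I would introduce the convergents $p_k/q_k$ obtained by truncating after the element $a_k$, which satisfy the standard recurrences $p_k = a_k p_{k-1} + p_{k-2}$ and $q_k = a_k q_{k-1} + q_{k-2}$. From these I would derive the determinant identity $p_k q_{k-1} - p_{k-1} q_k = (-1)^{k-1}$ by induction, which shows that consecutive convergents differ by exactly $1/(q_k q_{k-1})$ and straddle their limit. Because $a_k \geq 1$, the denominators $q_k$ grow at least as fast as the Fibonacci numbers, so $q_k q_{k-1} \to \infty$; the convergents therefore form a Cauchy sequence, and since $a$ itself always lies between two consecutive convergents, a sandwiching argument identifies the common limit as $a$. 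This settles existence in the infinite case.

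For the finiteness dichotomy I would argue both directions. If $a = p/q$ is rational, the algorithm coincides with the Euclidean algorithm applied to $p$ and $q$: the quantities $\xi_k - a_k$ are ratios of strictly decreasing non-negative integer remainders, so the process terminates and the continued fraction is finite. Conversely, a finite continued fraction is built from integers using only addition and reciprocation, so its value is manifestly rational. Combining these yields that the fraction is finite if and only if $a$ is rational; in particular, for irrational $a$ the algorithm never halts, since rationality is preserved in both directions by the maps $\xi \mapsto \lfloor \xi \rfloor$ and $\xi \mapsto 1/(\xi - \lfloor \xi \rfloor)$, and so the convergence result of the previous step applies.

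Finally, uniqueness I would prove by induction on the length. The integer part $a_0$ is forced to equal $\lfloor a \rfloor$, because the tail continued fraction is at least $1$ and hence its reciprocal lies in $(0,1]$; stripping off $a_0$ and inverting then reduces the claim to the tail, completing the induction. The point I expect to require the most care is the familiar ambiguity $a_n = (a_n - 1) + 1/1$ at the end of a finite fraction, which I would eliminate by adopting the normalisation that the final element of a finite continued fraction is at least $2$. Together with the convergence estimates of the second step, which form the other technical heart of the argument, this completes the proof.
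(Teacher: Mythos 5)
Your argument is essentially correct, and it is in fact the standard proof: the paper itself does not prove this statement but imports it verbatim from \cite[Theorem~14]{Khinchin1997}, where the argument runs exactly along the lines you describe (the continued fraction algorithm, the recurrences for $p_k,q_k$, the determinant identity $p_kq_{k-1}-p_{k-1}q_k=(-1)^{k-1}$, Fibonacci-type growth of the $q_k$, the Euclidean algorithm for the rational case, and uniqueness by stripping off $a_0=\lfloor a\rfloor$). Two details deserve the care you already anticipate: the claim that ``$a$ always lies between two consecutive convergents'' should be justified via the identity $a=\bigl(\xi_{k+1}p_k+p_{k-1}\bigr)/\bigl(\xi_{k+1}q_k+q_{k-1}\bigr)$ together with monotonicity in $\xi_{k+1}$, and the uniqueness step needs the tail to be strictly greater than $1$ (not merely $\geq 1$) so that its reciprocal lies in the open interval $(0,1)$ and $a_0$ is genuinely forced to be $\lfloor a\rfloor$; your normalisation that the final element of a finite expansion is at least $2$ is precisely what guarantees this and is the same convention used in the cited reference. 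With those points filled in, the proposal is complete.
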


Given an irrational number $a$, its continued fraction representation will have an infinite sequence of elements $[a_0,a_1,a_2,\dots]$. We can construct a sequence of rational approximants indexed by $k\in\mathbb{N}$ by truncating the sequence of elements and considering the rational numbers with finite continued fractions with elements $[a_0,a_1,\dots,a_k]$. Since any such number must be rational, for each $k$ there exists coprime integers $p_k$ and $q_k$ such that the $k$\textsuperscript{th} rational number in this sequence is equal to $p_k/q_k$. Clearly, $p_k/q_k\to a$ as $k\to\infty$ and the rate of convergence can be estimated by the following result, from \cite[Theorem~10]{Khinchin1997}:

\begin{theorem}
    Let $a\in\mathbb{R}$ be equal to the infinite continued fraction with elements $[a_0,a_1,a_2,\dots]$ and let $p_k$ and $q_k$ be coprime integers such that $p_k/q_k$ is equal to the finite continued fraction with elements $[a_0,a_1,\dots,a_k]$. Then, it holds that
    $$
    \left| a-\frac{p_k}{q_k}\right| < \frac{1}{q_k q_{k+1}}.
    $$
\end{theorem}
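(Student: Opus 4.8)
The plan is to run the classical argument built on the convergents of the continued fraction together with the notion of a complete quotient. First I would set up the numerators and denominators of the convergents: with the conventions $p_{-1}=1$, $p_{-2}=0$, $q_{-1}=0$, $q_{-2}=1$, an induction on $k$ shows that the finite continued fraction with elements $[a_0,\dots,a_k]$ equals $p_k/q_k$, where
\[
p_k = a_k p_{k-1} + p_{k-2}, \qquad q_k = a_k q_{k-1} + q_{k-2}.
\]
The same induction (or a direct two-line computation from the recurrence) gives the determinant identity $p_k q_{k-1} - p_{k-1} q_k = (-1)^{k-1}$, which is the algebraic heart of the estimate; it also certifies that $p_k$ and $q_k$ are coprime, matching the hypothesis. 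Since every $a_j$ is a positive integer, the $q_k$ are positive and the recurrence reads $q_{k+1} = a_{k+1} q_k + q_{k-1}$, a fact I will use at the very end.

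Next I would introduce the complete quotient $\alpha_{k+1} = [a_{k+1}, a_{k+2}, \dots]$, i.e.\ the infinite continued fraction obtained by discarding the first $k+1$ elements of $a$. The key observation is that $a$ is represented by the same Möbius transformation as the convergent formula, but with the real number $\alpha_{k+1}$ inserted in place of an integer final element, giving
\[
a = \frac{\alpha_{k+1}\, p_k + p_{k-1}}{\alpha_{k+1}\, q_k + q_{k-1}}.
\]
Subtracting $p_k/q_k$, clearing denominators, and cancelling the $\alpha_{k+1} p_k q_k$ terms collapses the numerator to $q_k p_{k-1} - p_k q_{k-1} = (-1)^k$ by the determinant identity, so that
\[
a - \frac{p_k}{q_k} = \frac{(-1)^k}{q_k\bigl(\alpha_{k+1}\, q_k + q_{k-1}\bigr)}.
\]

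Taking absolute values and bounding the denominator from below then finishes the proof. Because $a$ is irrational, the tail is a genuine infinite continued fraction, so $\alpha_{k+1} = a_{k+1} + 1/\alpha_{k+2}$ with $\alpha_{k+2} > 0$ finite, whence $\alpha_{k+1} > a_{k+1}$ strictly; combined with the recurrence $q_{k+1} = a_{k+1} q_k + q_{k-1}$ this yields $\alpha_{k+1} q_k + q_{k-1} > q_{k+1}$, and therefore $\lvert a - p_k/q_k\rvert < 1/(q_k q_{k+1})$ as claimed. The steps are individually routine, and I expect the only genuine subtlety to be the careful justification of the complete-quotient identity for $a$ (that the convergent formula remains valid with a real final argument) together with the \emph{strictness} of $\alpha_{k+1} > a_{k+1}$, which is exactly where irrationality of $a$ enters and which is responsible for the strict inequality in the conclusion.
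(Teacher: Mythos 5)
Your proof is correct: the convergent recurrences, the determinant identity, the complete-quotient representation $a = (\alpha_{k+1}p_k + p_{k-1})/(\alpha_{k+1}q_k + q_{k-1})$, and the strict bound $\alpha_{k+1} > a_{k+1}$ all check out, and together they give exactly the claimed strict inequality. The paper does not prove this statement itself but quotes it from Khinchin's book, and your argument is precisely the standard proof given there, so there is nothing further to compare.
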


\subsection{Tilings}\label{subsec: tilings}
Another way of generating aperiodic structures with long-range order is through the means of inflation tilings, for a thorough introduction see \cite{baake2013aperiodic}. Given a finite alphabet $\mathcal{A}_n = \left\{a_i \mid 1 \leq i \leq n\right\}$, consider the free group $F_n =  \langle a_1, \dots, a_n \rangle$ formed by the letters of $\mathcal{A}_n$ and their formal inverses together with concatenation as group operation. $F_n$ is an infinite group, containing all possible finite words that can be generated from $\mathcal{A}_n$.

\begin{definition}\label{def: substitution rule}
    A general substitution rule $\varrho$ on a finite alphabet $\mathcal{A}_n$ with $n$ letters is an endomorphism of the corresponding free group $F_n$.
\end{definition}

Substitution rules, where $\varrho(a_i)$ contains no negative powers, are of particular interest. Such substitution rules are called \textit{non-negative} substitution rules. They are often studied together with the dictionary $\mathcal{A}^*$ of $\mathcal{A}_n$, defined by
\begin{equation*}
    \mathcal{A}^* := \{ u \in F_n \mid u \text{ contains no negative powers} \}.
\end{equation*}

\begin{definition}\label{def: primitive_rule}
    A (non-negative) substitution rule $\varrho$ on a finite alphabet $\mathcal{A}_n$ is called irreducible when for each index pair $(i,j)$ there exists some $k \in \N$ such that $a_j$ is a subword of $\varrho(a_i)^k$. A (non-negative) substitution rule is called primitive if there exists some $k \in \N$ such that every $a_j$ is a subword of $\varrho(a_i)^k$.
\end{definition}
Through Abelianisation, one can associate to every substitution rule $\varrho$ a substitution matrix $M_\varrho$ such that the following diagram commutes
\begin{center}
    \begin{tikzpicture}
    \usetikzlibrary {positioning}
        \node (Free1) at (0,0)  {$F_n$};
        \node  (Free2) at (2.5,0)  {$F_n$};
        \node (Z2) at (2.5,-1.5) {$\Z^n$};
        \node (Z1) at (0,-1.5) {$\Z^n$};
        \draw[->] (Free1.east)--(Free2.west) node[midway,above] () {$\varrho$};
        \draw[->] (Free2.south)--(Z2.north)node[midway,right] () {$\iota$};
        \draw[->] (Z1.east)--(Z2.west) node[midway,above] () {$M_\varrho$};
        \draw[->] (Free1.south)--(Z1.north) node[midway,right] () {$\iota$};
    \end{tikzpicture}
\end{center}

The following lemma holds 
\begin{lemma}
    A (non-negative) substitution rule $\varrho$ is irreducible or primitive if and only if its substitution matrix $M_\varrho$ is an irreducible or primitive non-negative integer matrix, respectively.
\end{lemma}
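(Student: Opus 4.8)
The plan is to reduce both equivalences to a single combinatorial identity relating powers of the substitution matrix to iterated substitutions, after which the statement becomes a matter of matching the standard graph-theoretic characterisations of irreducible and primitive non-negative matrices with the conditions in Definition~\ref{def: primitive_rule}. Throughout I read the condition there as asking that $a_j$ be a subword of the \emph{iterate} $\varrho^k(a_i)$ (rather than of the concatenation power), and I write $\iota(u)\in\Z^n$ for the Abelianisation of a word $u\in F_n$, whose $j$-th entry records the signed number of occurrences of $a_j$ in $u$. For $u$ in the dictionary $\mathcal{A}^*$, that is for $u$ containing no negative powers, this signed count is simply the number of times $a_j$ occurs in $u$.

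The key step is the identity
\begin{equation*}
    (M_\varrho^k)_{ji} = \#\{\,\text{occurrences of } a_j \text{ in } \varrho^k(a_i)\,\},
\end{equation*}
which I would prove by induction on $k$ using the commuting square $\iota\circ\varrho = M_\varrho\circ\iota$. The base case $k=1$ is just the definition of $M_\varrho$ applied to $\iota(a_i)=e_i$, which identifies the $i$-th column of $M_\varrho$ with $\iota(\varrho(a_i))$. Iterating the square gives $\iota\circ\varrho^k = M_\varrho^k\circ\iota$, so $\iota(\varrho^k(a_i)) = M_\varrho^k e_i$ is the $i$-th column of $M_\varrho^k$, whose $j$-th entry is $(M_\varrho^k)_{ji}$. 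The one point that genuinely needs care is that, because $\varrho$ is non-negative, every $\varrho^k(a_i)$ again lies in $\mathcal{A}^*$; hence no cancellation occurs and $\iota$ really returns occurrence counts rather than signed sums. This is exactly where the non-negativity hypothesis enters.

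Granting the identity, the two equivalences follow by unwinding definitions. A non-negative matrix $M$ is irreducible precisely when for every pair $(i,j)$ there is some $k\in\N$ with $(M^k)_{ij}>0$, and primitive precisely when a single $k$ satisfies $(M^k)_{ij}>0$ for all $(i,j)$ simultaneously. Reading off the identity, $a_j$ is a subword of $\varrho^k(a_i)$ if and only if $(M_\varrho^k)_{ji}>0$. Hence $\varrho$ is irreducible in the sense of Definition~\ref{def: primitive_rule} exactly when for each $(i,j)$ there is $k$ with $(M_\varrho^k)_{ji}>0$, which, after relabelling the universally quantified pair, is irreducibility of $M_\varrho$; equivalently one invokes that $M$ is irreducible iff $M^\top$ is. Likewise $\varrho$ is primitive exactly when a single $k$ makes $(M_\varrho^k)_{ji}>0$ for all $(i,j)$, i.e. $M_\varrho^k$ is entrywise positive, which is primitivity of $M_\varrho$.

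The only real obstacle I anticipate is the inductive identity, and in particular keeping the reading of $\iota$ as an honest occurrence count valid at every stage; this rests on remaining inside $\mathcal{A}^*$, which non-negativity guarantees but which would fail for a general substitution rule in the sense of Definition~\ref{def: substitution rule}. Everything after the identity is bookkeeping, the only cosmetic subtlety being the index transposition between the combinatorial condition ``$a_j$ occurs in $\varrho^k(a_i)$'' and the matrix condition ``$(M^k)_{ij}>0$''. This is harmless, since both irreducibility and primitivity are preserved under passing to the transpose.
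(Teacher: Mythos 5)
The paper states this lemma without proof (it is a standard fact about substitution matrices), so there is no in-paper argument to compare against; your proof is correct and is the standard one, resting on the identity $(M_\varrho^k)_{ji} = \iota(\varrho^k(a_i))_j$ obtained by iterating the commuting square, together with the crucial observation that non-negativity keeps every iterate inside $\mathcal{A}^*$ so that the Abelianisation genuinely counts occurrences. Your reading of the ``$\varrho(a_i)^k$'' in Definition~\ref{def: primitive_rule} as the iterate $\varrho^k(a_i)$ is the intended one (under the literal concatenation-power reading, a single letter $a_j$ is a subword of $\varrho(a_i)^k$ iff it is a subword of $\varrho(a_i)$, which would make the condition collapse to $k=1$ and falsify the lemma), and your handling of the index transposition via invariance of irreducibility and primitivity under transpose is the right bookkeeping.
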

Studying the resulting substitution matrix $M_\varrho$ can yield further insight into behaviour of the system defined by $\varrho$. For two letter substitution rules, a classification based on the Perron-Frobenius eigenvalue was developed in \cite{kolar1993new}. Notably, it can be shown that a tiling is classically quasi-crystalline (i.e. quasiperiodic) if the Perron-Frobenius eigenvalue of the substitution matrix $M_\varrho$ is a Pisot-Vijayaraghavan number \cite{BOMBIERI1986}.

Given an bi-infinite word $w$, we define the \textit{shift space} of $w$ as
\begin{equation}\label{eq: shift_space}
    \mathbb{X}(w) = \overline{\{S^iw\mid i \in \Z\}}.
\end{equation}
this is sometimes also referred to as the \textit{hull of $w$}. Another important concept is that of local indistinguishability:
\begin{definition}\label{def: locally_indistinguishable}
    Two words $u$ and $v$ in the same alphabet are locally indistinguishable (LI) and are denoted $u\sim^{LI}v,$ if every finite subword of $u$ is also a subword of $v$ and vice versa.
\end{definition}
We can now state the following theorem:
\begin{theorem}
    Every primitive substitution rule on a finite alphabet possesses a unique hull. This hull consists of a single, closed $LI$ class.
\end{theorem}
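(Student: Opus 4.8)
The plan is to describe the hull intrinsically, in terms of the substitution's language, and then show that every hull $\mathbb{X}(w)$ arising from a generating word $w$ coincides with this canonical object. Call a finite word \emph{legal} if it occurs as a subword of $\varrho^k(a_i)$ for some $k\geq 0$ and some letter $a_i$, and write $\mathcal{L}_\varrho$ for the set of all legal words; call a bi-infinite word \emph{admissible} if all of its finite subwords are legal, and let $\mathbb{X}_\varrho$ be the set of admissible bi-infinite words over $\mathcal{A}_n$. This set is manifestly shift-invariant, and it is closed because membership is cut out by the local conditions ``every window is legal''. I would first reduce the theorem to two claims: (i) every admissible generating word $w$ satisfies $\mathbb{X}(w)=\mathbb{X}_\varrho$, and (ii) any two elements of $\mathbb{X}_\varrho$ are mutually $LI$.

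The engine for both claims is a uniform recurrence property, and establishing it is the main obstacle. Using the preceding Lemma, primitivity of $\varrho$ makes $M_\varrho$ a primitive matrix, so there is $k_0$ with $(M_\varrho^{k_0})_{ji}>0$ for all $i,j$; equivalently, every letter $a_j$ occurs in every $\varrho^{k_0}(a_i)$. I claim every legal word $u$ occurs in \emph{every} sufficiently long legal word. Indeed, fix $u$ as a subword of $\varrho^m(a_i)$. Applying $\varrho^m$ to an occurrence of $a_i$ inside $\varrho^{k_0}(a_j)$ shows that $\varrho^m(a_i)$, and hence $u$, is a subword of $\varrho^{m+k_0}(a_j)$ for \emph{every} $j$ (writing $\prec$ for the subword relation, this is $\varrho^m(a_i)\prec\varrho^{m+k_0}(a_j)$). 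Any legal word may be taken to be a subword of some $\varrho^{M}(a_\ell)$ with $M\geq m+k_0$ — if $M$ is too small, enlarge it via $\varrho^M(a_\ell)\prec\varrho^{M+k_0}(a_{\ell'})$, again by primitivity. Writing $\varrho^{M}(a_\ell)=\varrho^{m+k_0}(\varrho^{M-m-k_0}(a_\ell))$ exhibits it as a concatenation of blocks $\varrho^{m+k_0}(\cdot)$, each of length at most $L:=\max_j|\varrho^{m+k_0}(a_j)|$. Any window of length $\geq 2L$ inside such a concatenation contains at least one complete block, which already contains $u$; hence $u$ occurs in every legal word of length $\geq 2L$. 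This is exactly uniform recurrence, with bound $N(u)=2L$.

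From here the conclusions follow quickly. Uniform recurrence implies that $(\mathbb{X}_\varrho,S)$ is minimal: for any $v\in\mathbb{X}_\varrho$ every legal word occurs in $v$ (indeed with bounded gaps), so the orbit of $v$ is dense, and a subshift in which every orbit is dense has no proper closed invariant subset. In particular every $v\in\mathbb{X}_\varrho$ has set of finite subwords exactly $\mathcal{L}_\varrho$: it contains only legal words by definition, and all of them by recurrence. Since two bi-infinite words are $LI$ precisely when they share the same set of finite subwords, all elements of $\mathbb{X}_\varrho$ are mutually $LI$, giving claim (ii); closedness is built into the definition. Finally, any admissible generating word $w$ lies in $\mathbb{X}_\varrho$, so $\mathbb{X}(w)=\overline{\{S^iw\mid i\in\Z\}}\subseteq\mathbb{X}_\varrho$ is a non-empty closed invariant set, whence $\mathbb{X}(w)=\mathbb{X}_\varrho$ by minimality. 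This proves claim (i): the hull is independent of the chosen generating word and equals the single closed $LI$ class $\mathbb{X}_\varrho$. Aside from the recurrence lemma, the only point needing care is non-emptiness of $\mathbb{X}_\varrho$; since the words $\varrho^k(a_i)$ have lengths tending to infinity (the Perron--Frobenius eigenvalue of $M_\varrho$ exceeds $1$), re-centring them and extracting a convergent subsequence in the compact space $\mathcal{A}_n^{\Z}$ yields a bi-infinite word all of whose windows are legal, supplying the required seed.
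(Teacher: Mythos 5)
The paper does not prove this theorem at all: it is quoted as background from the tiling literature (the reference \cite{baake2013aperiodic}), so there is no in-paper argument to compare against. Your proof is correct and is essentially the standard one: primitivity of $M_\varrho$ gives the key uniform-recurrence estimate (every legal word occurs in every legal word of length at least $2L$, via the block decomposition of $\varrho^{M}(a_\ell)$ into complete $\varrho^{m+k_0}$-images), uniform recurrence gives minimality of the admissible subshift $\mathbb{X}_\varrho$, minimality identifies every hull $\mathbb{X}(w)$ with $\mathbb{X}_\varrho$, and the equality of subword sets shows all its elements are mutually $LI$. You also correctly flag and handle the two points that are easy to gloss over, namely non-emptiness of $\mathbb{X}_\varrho$ (compactness of $\mathcal{A}_n^{\Z}$ applied to re-centred words $\varrho^k(a_i)$ of growing length) and the fact that a window of length $2L$ in a concatenation of blocks of length at most $L$ must contain a complete block.
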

As is the case with almost periodic ellitpic differential operators, we will soon see in Section \ref{sec: supercell} that the spectrum of an operator whose coefficients are based on tiling rules is determined by its $LI$ class.

\section{Supercell method}\label{sec: supercell}
A widespread technique to study the (spectral) properties of quasicrystals is the supercell approach whereby a non-periodic coefficient is approximated by a periodically repeated sample, a so-called \emph{supercell}. This method is conceptually simple and straightforward to implement, hence its popularity. In this section, we will derive error bounds for this approach. We will further outline how one needs to choose the approximants to ensure maximal efficiency.

\subsection{Convergence theory}

To avoid notation becoming too cumbersome, we will only consider operators of the form
\begin{equation}\label{eq: operator_qp}
    A_\theta(x) = \sum_k a_k(x,\theta x) \frac{d^k}{dx^k},
\end{equation}
where $a_k \in \mathcal{C}^\infty_b(\mathbb{T}^2)$ and $\theta \in \R\setminus \Q$. The following result serves as a warm up and is a result of strong operator convergence.

\begin{proposition}\label{prop: spectrally_inclusive}
    Let $\theta \in \R$ and $\theta_n$ be a sequence converging to $\theta$. Then, it holds that
    \begin{equation*}
        \lim_{n\rightarrow \infty} \lVert A_\theta - A_{\theta_n}f\rVert_{L^2} = 0,
    \end{equation*}
    for all $f \in H^2(\R)$. In particular, the sequence $\sigma(A_{\theta_n})$ is spectrally inclusive in the sense that for every $\lambda \in \sigma(A_\theta)$ there exists a sequence $\lambda_n \in \sigma(A_{\theta_n})$ such that $\lambda = \lim \lambda_n$.
\end{proposition}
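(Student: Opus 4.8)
The plan is to treat the two assertions separately: first establish the strong operator convergence $\lVert (A_\theta - A_{\theta_n})f\rVert_{L^2}\to 0$ for every $f\in H^2(\R)$ by a direct estimate, and then deduce spectral inclusion as a black-box consequence of the theory of self-adjoint operators. The first part is elementary but requires a little care; the second part is a standard perturbation-theoretic implication.

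For the convergence estimate I would write the difference pointwise as
$$(A_\theta - A_{\theta_n})f(x) = \sum_{k} \big(a_k(x,\theta x) - a_k(x,\theta_n x)\big) f^{(k)}(x),$$
and bound each summand in $L^2$ using the triangle inequality. The key observation is that, since each $a_k$ is smooth on the compact torus $\mathbb{T}^2$, it is both bounded and Lipschitz in its second argument, so the factor $g_k^n(x) := a_k(x,\theta x) - a_k(x,\theta_n x)$ obeys simultaneously $\lvert g_k^n(x)\rvert \le 2\lVert a_k\rVert_\infty$ and $\lvert g_k^n(x)\rvert \le L_k\lvert\theta-\theta_n\rvert\,\lvert x\rvert$. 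The difficulty is that the second (small) bound degrades linearly in $\lvert x\rvert$ and so cannot be applied globally. I would therefore fix $R>0$ and split the integral over $\{\lvert x\rvert\le R\}$ and $\{\lvert x\rvert>R\}$. On the tail, the uniform bound together with $f^{(k)}\in L^2(\R)$ (which holds since $f\in H^2(\R)$) makes $\int_{\lvert x\rvert>R}\lvert g_k^n f^{(k)}\rvert^2$ small once $R$ is large, uniformly in $n$; on the bounded region the Lipschitz bound contributes a factor $L_k^2\lvert\theta-\theta_n\rvert^2 R^2\lVert f^{(k)}\rVert_{L^2}^2$ which, for fixed $R$, vanishes as $n\to\infty$. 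The usual two-parameter $\epsilon$-argument (choose $R$ first, then $n$) then closes the estimate.

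For the spectral inclusion I would note that all of the operators $A_{\theta_n}$ and $A_\theta$ are self-adjoint and share $\mathcal{C}^\infty_c(\R)$ as a common core — this is where uniform ellipticity and the boundedness of the coefficients enter. Since the first part gives $A_{\theta_n}\phi \to A_\theta\phi$ in $L^2$ for every $\phi$ in this core, the standard criterion for strong resolvent convergence of self-adjoint operators (strong convergence on a common core; see, e.g., Reed--Simon) yields $A_{\theta_n}\to A_\theta$ in the strong resolvent sense. The desired conclusion — that for each $\lambda\in\sigma(A_\theta)$ there exist $\lambda_n\in\sigma(A_{\theta_n})$ with $\lambda_n\to\lambda$ — is then exactly the lower semicontinuity of the spectrum that strong resolvent convergence always provides.

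I expect the main obstacle to be the convergence estimate, and specifically the $\lvert x\rvert$-growth of the Lipschitz bound: strong convergence holds only because test functions in $H^2(\R)$ decay in the $L^2$ sense, which is precisely what the localisation/splitting step exploits. This also explains why the result gives only spectral inclusion and not the reverse: the coefficients $a_k(\cdot,\theta_n\cdot)$ do not converge uniformly, so the stronger norm-resolvent convergence (which would also yield the reverse inclusion) generally fails. A secondary point requiring attention is to confirm that $\mathcal{C}^\infty_c(\R)$ is genuinely a common core for the whole family, as this underpins the clean application of the self-adjoint convergence theorems.
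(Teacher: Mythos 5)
Your argument is essentially identical to the paper's: the same pointwise decomposition of $(A_\theta - A_{\theta_n})f$, the same split into $\{\lvert x\rvert\le R\}$ (where the Lipschitz bound in the second argument of $a_k$ gives smallness for large $n$) and the tail $\{\lvert x\rvert>R\}$ (where boundedness of $a_k$ plus $f^{(k)}\in L^2$ gives smallness uniformly in $n$), closed by the choose-$R$-then-$n$ argument. You are in fact more explicit than the paper on the second assertion, which the paper leaves implicit after noting the result ``is a result of strong operator convergence''; your route via a common core, strong resolvent convergence, and lower semicontinuity of the spectrum is the standard way to fill that in.
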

\begin{proof}
    Let $f \in C^\infty_c(\R)$ and $\epsilon>0$. We have
    \begin{equation*}
        \begin{split}
            \lVert (A_\theta - A_{\theta_n})f\rVert ^2 &= \int_\R \left\lvert \sum_k (a_k(x,\theta x) - a_k(x,\theta_nx))\frac{d^kf}{dx^k}(x)\right\rvert^2 \d x\\
            &\leq \sum_k C \int_\R \left\lvert a_k(x,\theta x) - a_k(x,\theta_nx)\right\rvert ^2 \left\lvert \frac{d^kf}{dx^k}(x) \right\rvert^2 \d x.
        \end{split}
    \end{equation*}
    As $f \in H^k$, there exists some $R>0$ such that
    \begin{equation*}
        \sum_k C \int_{\lvert x \rvert > R} \lvert a_k(x,\theta x) - a_k(x,\theta_nx)\rvert ^2 \left\lvert \frac{d^kf}{dx^k}(x) \right\rvert^2 \d x<\frac{\epsilon}{2}.
    \end{equation*}
    On the other hand, since $a_k \in \mathcal{C}_b^\infty(\mathbb{T}^2)$, for $n$ large enough, we have
    \begin{equation*}
        \sum_k C \int_{\lvert x \rvert < R} \lvert a_k(x,\theta x) - a_k(x,\theta_nx)\rvert ^2 \left\lvert \frac{d^kf}{dx^k}(x) \right\rvert^2 \d x<\frac{\epsilon}{2},
    \end{equation*}
    and the claim follows.
\end{proof}
To obtain a stronger result, we will need to construct approximate eigenfunctions. The next Lemma is a well known consequence of the spectral theorem.
\begin{lemma}\label{lem: approximate_eigenvectors}
    Let $\mathcal{H}$ be a Hilbert space and $A:D(A) \subset\mathcal{H} \rightarrow \mathcal{H}$ a normal operator. Then $\lambda \in \sigma(A)$ if and only if there exists a normalised sequence $u_n \in \mathcal{H}$ such that $\lim \lVert (A-\lambda)u_n\rVert = 0$.
\end{lemma}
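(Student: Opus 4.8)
The plan is to prove the two implications separately, with the easy direction requiring nothing beyond the definition of the resolvent set, and the harder direction resting entirely on the spectral theorem for (possibly unbounded) normal operators.

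For the direction that assumes the existence of a normalised sequence $u_n \in D(A)$ with $\lVert (A-\lambda)u_n\rVert \to 0$, I would argue by contraposition and show that $\lambda$ cannot lie in the resolvent set $\rho(A)$. If it did, the resolvent $R_\lambda = (A-\lambda)^{-1}$ would be a bounded operator, and then
$$1 = \lVert u_n\rVert = \lVert R_\lambda (A-\lambda)u_n\rVert \leq \lVert R_\lambda\rVert\,\lVert (A-\lambda)u_n\rVert \to 0,$$
which is absurd. Hence $\lambda \in \sigma(A)$. This step is elementary and uses no structural assumption on $A$ beyond closedness.

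For the converse, suppose $\lambda \in \sigma(A)$. Here I would invoke the spectral theorem for normal operators, which furnishes a projection-valued measure $E$ on the Borel subsets of $\C$ with $A = \int_\C z \, dE(z)$ and $D(A) = \{u : \int_\C |z|^2 \, d\langle E(z)u, u\rangle < \infty\}$. The key spectral fact is that $\lambda$ belongs to the spectrum precisely when it lies in the support of $E$, that is, $E(B_{1/n}(\lambda)) \neq 0$ for every $n$, where $B_r(\lambda)$ denotes the ball of radius $r$ about $\lambda$. For each $n$ I would select a unit vector $u_n$ in the range of $E(B_{1/n}(\lambda))$; since this projection is associated to a bounded Borel set, its range lies inside $D(A)$, so $(A-\lambda)u_n$ is well defined. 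The functional calculus then gives
$$\lVert (A-\lambda)u_n\rVert^2 = \int_{B_{1/n}(\lambda)} |z - \lambda|^2 \, d\langle E(z)u_n, u_n\rangle \leq \frac{1}{n^2},$$
so that $\lVert (A-\lambda)u_n\rVert \to 0$, producing the desired approximate eigenvectors.

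The main obstacle — and the only place where normality is genuinely needed — is the converse direction, specifically the passage from the abstract membership $\lambda \in \sigma(A)$ to the nonvanishing of the local spectral projections $E(B_{1/n}(\lambda))$. I would take care to treat the possibly unbounded case correctly: one must verify that vectors drawn from the range of $E$ on a bounded set indeed lie in $D(A)$, and that the support characterisation of the spectrum is the correct input from the spectral theorem. Everything else reduces to the routine functional calculus estimate above.
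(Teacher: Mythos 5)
Your proof is correct, and it follows exactly the route the paper indicates: the paper states this lemma without proof, describing it as ``a well known consequence of the spectral theorem'', which is precisely the spectral-measure argument you give (resolvent boundedness for the easy direction, non-vanishing of $E(B_{1/n}(\lambda))$ plus the functional-calculus estimate for the converse). Your care about $u_n$ lying in $D(A)$ and about the support characterisation of $\sigma(A)$ addresses the only genuinely delicate points.
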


The following is a generalisation of a well-known result from the study of dynamically defined Schr\"odinger operators, see e.g. \cite{Avron1990, Damanik2016}.

\begin{theorem}\label{thrm: implicit_domain_qp}
     Let $\theta \in \R$ and $A_\theta$ be a self-adjoint elliptic operator of the form \eqref{eq: operator_qp}. For $y \in \mathbb{T}^2$ we define
     \begin{equation*}
         A_{\theta,y} = \sum_k a_k(x + y_1, \theta x + y_2)\frac{d^k}{dx^k}.
     \end{equation*} Consider the set
     \begin{equation*}
         \Sigma(A_\theta) := \bigcup_{y \in \mathbb{T}^2}\sigma(A_{\theta,y}).
     \end{equation*}         
     There exists some $C>0$ such that for any $\theta' \in \R$ with $\lvert \theta - \theta'\rvert$ small enough, for every $\lambda \in \Sigma(A_\theta)$ we have
     \begin{equation*}
         d(\lambda, \Sigma(A_{\theta'})) \leq C(1 + \lvert \lambda \rvert)\lvert \theta - \theta'\rvert^{1/2}.
     \end{equation*}
\end{theorem}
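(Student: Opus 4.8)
The plan is to reduce everything to the construction of localized approximate eigenfunctions and then to exploit the covariance of the family $\{A_{\theta,y}\}_{y\in\mathbb{T}^2}$ under translation. The starting observation is the elementary half of Lemma~\ref{lem: approximate_eigenvectors}: for a self-adjoint operator $B$ and any normalized $u$ in its domain, $d(\lambda,\sigma(B))\leq \lVert(B-\lambda)u\rVert$. Since $\Sigma(A_{\theta'})\supseteq\sigma(A_{\theta',y_1})$ for every $y_1$, it therefore suffices to show the following: given $\lambda\in\sigma(A_{\theta,y_0})$, there exist $y_1\in\mathbb{T}^2$ and a normalized $u$ with $\lVert(A_{\theta',y_1}-\lambda)u\rVert\leq C(1+\lvert\lambda\rvert)\lvert\theta-\theta'\rvert^{1/2}$. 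Taking the union over $y_0$ then gives the claim for all $\lambda\in\Sigma(A_\theta)$.

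First I would produce an approximate eigenfunction localized at a length scale $\ell$ still to be chosen. By Lemma~\ref{lem: approximate_eigenvectors} there is a normalized $w$ (which, by truncation and density, may be taken smooth with compact support) with $\lVert(A_{\theta,y_0}-\lambda)w\rVert<\delta$ for any prescribed $\delta>0$. Fix a smooth partition of unity $\{\chi_i\}$ at scale $\ell$, with $\sum_i\chi_i^2=1$, bounded overlap, and $\lVert\chi_i^{(j)}\rVert_\infty\lesssim \ell^{-j}$. Writing $(A_{\theta,y_0}-\lambda)(\chi_i w)=\chi_i(A_{\theta,y_0}-\lambda)w+[A_{\theta,y_0},\chi_i]w$ and using $\sum_i\chi_i^2=1$ gives
\[
\sum_i \lVert(A_{\theta,y_0}-\lambda)(\chi_i w)\rVert^2 \lesssim \delta^2 + \frac{1}{\ell^2}\lVert w\rVert_{H^{d-1}}^2,
\]
where $d$ denotes the order of $A$ and the commutator terms are estimated through the derivatives of $\chi_i$. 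The elliptic estimate $\lVert w\rVert_{H^{d}}\lesssim \lVert A_{\theta,y_0}w\rVert+\lVert w\rVert \lesssim 1+\lvert\lambda\rvert$ then controls the right-hand side by $\delta^2+\ell^{-2}(1+\lvert\lambda\rvert)^2$. Since $\sum_i\lVert\chi_i w\rVert^2=1$, the weighted-average inequality $\min_i(a_i/b_i)\leq(\sum_i a_i)/(\sum_i b_i)$ yields an index $i$ for which $u:=\chi_i w/\lVert\chi_i w\rVert$ is normalized, supported in an interval of length $\lesssim \ell$, and satisfies $\lVert(A_{\theta,y_0}-\lambda)u\rVert\lesssim\delta+\ell^{-1}(1+\lvert\lambda\rvert)$. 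Letting $\delta\to0$ removes the first term.

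Next I would translate and perturb. If $u$ is supported near $x=s$, the function $v(x)=u(x+s)$ is supported in $[-\ell/2,\ell/2]$, and a direct computation shows $A_{\theta,y_1}v(x)=(A_{\theta,y_0}u)(x+s)$ with $y_1=(y_{0,1}+s,\,y_{0,2}+\theta s)\in\mathbb{T}^2$; hence $\lVert(A_{\theta,y_1}-\lambda)v\rVert=\lVert(A_{\theta,y_0}-\lambda)u\rVert\lesssim\ell^{-1}(1+\lvert\lambda\rvert)$. This is precisely the point at which the union over all $y\in\mathbb{T}^2$ in the definition of $\Sigma$ is needed. Finally, on $[-\ell/2,\ell/2]$ the coefficients satisfy $\lvert a_k(x+y_{1,1},\theta'x+y_{1,2})-a_k(x+y_{1,1},\theta x+y_{1,2})\rvert\leq \lVert\partial_2 a_k\rVert_\infty\lvert\theta-\theta'\rvert\,\lvert x\rvert\lesssim \lvert\theta-\theta'\rvert\,\ell$, so that
\[
\lVert(A_{\theta',y_1}-A_{\theta,y_1})v\rVert \lesssim \lvert\theta-\theta'\rvert\,\ell\sum_k\lVert v^{(k)}\rVert \lesssim \lvert\theta-\theta'\rvert\,\ell\,(1+\lvert\lambda\rvert),
\]
again using the elliptic bound on $\lVert v\rVert_{H^d}$. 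Combining the two estimates gives $\lVert(A_{\theta',y_1}-\lambda)v\rVert\lesssim (1+\lvert\lambda\rvert)\big(\ell^{-1}+\lvert\theta-\theta'\rvert\,\ell\big)$, and choosing $\ell=\lvert\theta-\theta'\rvert^{-1/2}$ produces the desired bound $C(1+\lvert\lambda\rvert)\lvert\theta-\theta'\rvert^{1/2}$.

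I expect the localization step to be the main obstacle. The approximate eigenfunctions supplied by the spectral theorem may be spread over arbitrarily large regions, whereas the perturbation $a_k(x,\theta'x)-a_k(x,\theta x)$ grows linearly in $\lvert x\rvert$; reconciling these forces one to cut the eigenfunction down to a finite window while keeping the residual small, which is exactly what the partition-of-unity/weighted-average argument achieves, and the elliptic estimates are what inject the correct $(1+\lvert\lambda\rvert)$ dependence. The remaining work is bookkeeping: ensuring the commutator errors genuinely scale like $\ell^{-1}(1+\lvert\lambda\rvert)$ uniformly in $y_0$, verifying the covariance identity on the torus, and checking that the two competing error sources balance at $\ell\sim\lvert\theta-\theta'\rvert^{-1/2}$ to yield the square-root exponent.
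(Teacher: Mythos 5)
Your proposal is correct and follows essentially the same route as the paper's proof: localise an approximate eigenfunction to a window of scale $\ell$ via a cutoff/pigeonhole argument with commutator and elliptic estimates, pass to a translated member of the hull (which is where the union over $y\in\mathbb{T}^2$ enters), bound the coefficient perturbation by $\lvert\theta-\theta'\rvert\,\ell$ on that window, and balance the two errors at $\ell=\lvert\theta-\theta'\rvert^{-1/2}$. The only cosmetic difference is that you use a discrete partition of unity together with a weighted-average inequality, whereas the paper averages a continuous family of bump functions $h_{L,\tau}$ over $\tau$ via Fubini and then picks a good $\tau$.
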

\begin{proof}
  Let $\epsilon>0$ and $\lambda \in \Sigma(\theta)$. Without loss of generality, we may assume $\lambda \in A_{\theta,0} = A_\theta$. Choose $f_\lambda \in H^k(\R)$ such that $\lVert (A_\theta-\lambda)f_\lambda\rVert_{L^2}<\epsilon\lVert f_\lambda \rVert$. Let $h \in C_c^\infty([-1,1])$ such that $\lVert h \rVert_{L^2} = 1$. We further define
  \begin{equation}\label{eq: supercell_convergence_qp_1}
      h_{L,\tau}(x) = h\left(\frac{x-\tau}{L}\right).
  \end{equation}
  Using Fubini's theorem, we have that
  \begin{equation}\label{eq: supercell_convergence_qp}
      \int_\R \lVert h_{L,\tau}g \rVert^2_{L^2} \d\tau = \lVert h\rVert_{L^2}^2 \lVert g\rVert_{L^2}^2,
  \end{equation}
  for all $g \in L^2$. In particular, it holds that
  \begin{equation}\label{eq: supercell_convergence_qp_2}
    \int_\R\lVert h_{L,\tau}(A_\theta - \lambda)f_\lambda\rVert_{L^2}^2\d\tau = \epsilon^2\lVert f_\lambda \rVert^2.
  \end{equation}
  Next, we compute the commutator
  \begin{equation}\label{eq: supercell_convergence_qp_3}
      \begin{split}
          [A_\theta, h_{L,\tau}](x) &= \sum_k a_k(x) \sum_{n = 1}^{k-1}\binom{k}{n}\frac{d^nh_{L,\tau}}{dx^n}(x)\frac{d^{k-n}}{dx^{k-n}}\\
          &= \sum_k a_k \sum_{n = 1}^{k-1}\binom{k}{n}L^{-n}\frac{d^n h}{dx^n}\left(\frac{x-\tau}{L}\right)\frac{d^{k-n}}{dx^{k-n}}.
      \end{split}
  \end{equation}
  Hence, there exists some $C>0$, only depending on $h$ and $k$ such that
  \begin{equation}
      \int_\R \lVert [A_\theta, h_{L,\tau}]f_\lambda\rVert_{L^2}^2\d\tau\leq CL^{-2}\lVert f_\lambda \rVert_{H^k}^2.
  \end{equation}
  Using ellitpic regularity \cite{Taylor2011}, we have
  \begin{equation}
      \lVert f_\lambda \rVert_{H^k}^2 \leq C (\lvert\lambda\rvert + \epsilon + 1)^{2}\lVert f_\lambda \rVert^2
  \end{equation}
  For any $\delta > 0$, it now holds that
    \begin{equation}\label{eq: supercell_convergence_qp_4}
        \begin{split}
            \int_\R\lVert (A_\theta - \lambda)h_{L,\tau}f_\lambda\rVert^2\d\tau &\leq (1 + \delta)\int_R\lVert h_{L,\tau}(A_\theta - \lambda)f_\lambda\rVert_{L^2}^2\d\tau + (1 + \delta^{-1})\int_\R \lVert [A_\theta, h_{L,\tau}]f_\lambda\rVert_{L^2}^2\d\tau\\
            &\leq (1 + \delta)\epsilon^2\lVert f_\lambda\rVert^2 + (1 + \delta^{-1})CL^{-2} (\lvert\lambda\rvert + \frac{\epsilon}{2} + 1)^{2}\lVert f_\lambda \rVert^2\\
            &= \int_\R ((1 + \delta)\epsilon^2 + (1 + \delta^{-1})CL^{-2} (\lvert\lambda\rvert + \epsilon + 1)^{2})\lVert h_{L,\tau} f_\lambda \rVert^2\d\tau.
        \end{split}
    \end{equation}
  Hence, there must exist some $\tau \in \R$ such that
  \begin{equation}\label{eq: supercell_convergence_qp_7}
      \lVert (A_\theta - \lambda)h_{L,\tau}f_\lambda\rVert^2\leq ((1 + \delta)\epsilon^2 + (1 + \delta^{-1})C L^{-2}(\lvert\lambda\rvert + \epsilon + 1)^{2})\lVert h_{L,\tau}f_\lambda\rVert^2.
  \end{equation}
  Define $\Tilde{A}_{\theta'}$ by
  \begin{equation}
      \Tilde{A}_{\theta'} = \sum_k a_k(x, \theta \tau + \theta'(x - \tau))\frac{d^k}{dx^k}.
  \end{equation}
  We have $\Tilde{A}_{\theta'} \in H(A_{\theta'})$ and it holds that 
  \begin{equation}\label{eq: supercell_convergence_qp_8}
      \lVert (\Tilde{A}_\theta' - \lambda)h_{L,\tau}f_\lambda \rVert \leq \lVert (A_\theta - \lambda)h_{L,\tau}f_\lambda \rVert + \lVert (\Tilde{A}_{\theta'} - A_\theta)h_{L,\tau}f_\lambda \rVert.
  \end{equation}
  Observe that for $x \in [\tau - L, \tau + L]$ the following bound holds
  \begin{equation}\label{eq: supercell_convergence_qp_9}
     \lvert a_k(x, \theta'x + (\theta - \theta')\tau)-a_k(x,\theta x)\rvert \leq \lVert \nabla a_k \rVert_{\mathcal{C}^0} \lvert \theta - \theta'\rvert L.
  \end{equation}
  It follows that 
  \begin{equation}\label{eq: supercell_convergence_qp_10}
      \lVert (\Tilde{A}_{\theta'} - A_\theta)h_{L,\tau}f_\lambda \rVert_{L^2} \leq C \lvert \theta - \theta'\rvert L \lVert h_{L,\tau}f_\lambda \rVert_{H^k}.
  \end{equation}
  Since
  \begin{equation*}
      \lVert h_{L,\tau}f_\lambda \rVert_{H^k} \leq C \lVert f_\lambda\rVert_{H_k},
  \end{equation*}
  as $L$ increases, we find that 
  \begin{equation*}
     \begin{split}
          \lVert (\Tilde{A}_\theta' - \lambda)h_{L,\tau}f_\lambda \rVert &\leq C\bigg( \left((1 + \delta)\epsilon^2 + (1 + \delta^{-1}) L^{-2}(\lvert\lambda\rvert + \epsilon + 1)^{2}\right)^{\frac{1}{2}} \\ 
          &\hspace{5cm} + \lvert \theta - \theta'\rvert L(\lvert\lambda\rvert + \epsilon + 1)\bigg)\lVert f_\lambda \rVert.
     \end{split}
  \end{equation*}
  By choosing $L = \lvert \theta - \theta'\rvert^{-1/2}$ and letting $\epsilon \rightarrow 0$, the claim follows.
\end{proof}

While for $\theta \in \R\setminus\Q$ it holds that $\Sigma(A_\theta) = \sigma(A_\theta)$, the same is not true in general for $\theta\in\mathbb{Q}$. In order to provide proper justification for the supercell approach, a slightly different result is needed. In what follows, we will prove the following theorem:

\begin{theorem}\label{thrm: explicit_domain_qp}
    Let $\theta \in \R$ and $A_\theta$ a self-adjoint elliptic operator of the form \eqref{eq: operator_qp}. Let $\theta_l = \frac{p_l}{q_l}$ be the $l$\textsuperscript{th} continued fraction approximant of $\theta$. Then there exists a constant $C>0$ such that 
    \begin{equation*}
        \begin{split}
            d(\lambda, \sigma(A_\theta)) \leq  C(1 + \lvert \lambda\rvert)\lvert q_l^{-1}, \quad \lambda \in \sigma(A_{\theta_l})\\
            d(\lambda, \sigma(A_{\theta_l})) \leq C(1 + \lvert \lambda \rvert)\sum_{k = n}^\infty q_k^{-1}\prod_{l = n}^{k-1}(1+Cq_{l}^{-1}), \quad \lambda \in \sigma(A_\theta).
        \end{split}
    \end{equation*}
\end{theorem}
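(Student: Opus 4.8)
The plan is to relate $\sigma(A_{\theta_l})$ and $\sigma(A_\theta)$ by routing through the auxiliary sets $\Sigma(A_{\theta'})$ controlled by Theorem~\ref{thrm: implicit_domain_qp}, using throughout that $\Sigma(A_\theta)=\sigma(A_\theta)$ for irrational $\theta$. The first inequality is the easy half: I would apply Theorem~\ref{thrm: implicit_domain_qp} with the roles of the two frequencies reversed. Its proof only involves $\sup_k\|\nabla a_k\|_{\mathcal{C}^0}$, the fixed cut-off $h$ and the order $k$, so the constant is uniform in the base frequency and one may take $\theta_l$ as base point and $\theta$ as its neighbour. This gives $d(\mu,\Sigma(A_\theta))\le C(1+|\mu|)|\theta-\theta_l|^{1/2}$ for all $\mu\in\Sigma(A_{\theta_l})$ once $l$ is large; combined with $\sigma(A_{\theta_l})\subseteq\Sigma(A_{\theta_l})$, with $\Sigma(A_\theta)=\sigma(A_\theta)$, and with the continued-fraction bound $|\theta-p_l/q_l|<1/(q_lq_{l+1})\le q_l^{-2}$, the first inequality follows at once.

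For the second inequality the forward application of Theorem~\ref{thrm: implicit_domain_qp} to $\lambda\in\sigma(A_\theta)=\Sigma(A_\theta)$ only controls $d(\lambda,\Sigma(A_{\theta_l}))$, whereas we need the distance to $\sigma(A_{\theta_l})$, and for rational $\theta_l$ these two sets really do differ. The key step I would isolate is therefore a lemma: for $\theta_l=p_l/q_l$ in lowest terms, $\Sigma(A_{\theta_l})$ and $\sigma(A_{\theta_l})$ lie within Hausdorff distance $C(1+|\cdot|)q_l^{-1}$. This is arithmetic in nature. A translation $x\mapsto x-y_1$ shows $A_{\theta_l,y}$ is unitarily equivalent to $A_{\theta_l,(0,\tilde y_2)}$ with $\tilde y_2=y_2-\theta_l y_1$, so $\Sigma(A_{\theta_l})=\bigcup_{\tilde y_2}\sigma(A_{\theta_l,(0,\tilde y_2)})$. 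Since $A_{\theta_l}$ has period $q_l$ and $\gcd(p_l,q_l)=1$, picking an integer $c$ with $p_lc\equiv1\pmod{q_l}$ and conjugating by $x\mapsto x-c$ shows that $\sigma(A_{\theta_l,(0,\tilde y_2)})$ is invariant under $\tilde y_2\mapsto\tilde y_2+q_l^{-1}$; the phase can thus be restricted to $\tilde y_2\in[0,q_l^{-1})$. On this window the coefficients of $A_{\theta_l,(0,\tilde y_2)}$ and $A_{\theta_l}$ differ uniformly by at most $\sup_k\|\partial_2 a_k\|_\infty q_l^{-1}$, so substituting a normalised approximate eigenfunction of one operator into the other (Lemma~\ref{lem: approximate_eigenvectors}) and using elliptic regularity to bound its $H^k$-norm by $C(1+|\lambda|)$ yields the stated $q_l^{-1}$ estimate.

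With this lemma I would then assemble the bound by telescoping over consecutive approximants. Combining the reversed Theorem~\ref{thrm: implicit_domain_qp} between $\theta_{k+1}$ and $\theta_k$, which costs $(q_kq_{k+1})^{-1/2}\le q_k^{-1}$, with the lemma gives a one-step comparison: every $\nu\in\sigma(A_{\theta_{k+1}})$ lies within $C(1+|\nu|)q_k^{-1}$ of $\sigma(A_{\theta_k})$. Starting from points $\lambda_K\in\sigma(A_{\theta_K})$ with $\lambda_K\to\lambda\in\sigma(A_\theta)$ (available because $\sigma(A_{\theta_K})\to\sigma(A_\theta)$), I would descend from level $K$ down to $n$, choosing nearest points $\nu_k\in\sigma(A_{\theta_k})$; each step contributes $C(1+|\nu_{k+1}|)q_k^{-1}$ and inflates the amplitude through $1+|\nu_k|\le(1+|\nu_{k+1}|)(1+Cq_k^{-1})$. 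Iterating the latter inequality is exactly what generates the products $\prod(1+Cq_l^{-1})$, while summing the per-level contributions generates $\sum_{k\ge n}q_k^{-1}\prod(1+Cq_l^{-1})$. Passing to $K\to\infty$ is justified because $q_k$ grows at least geometrically, so $\sum_kq_k^{-1}<\infty$ and $\prod_k(1+Cq_k^{-1})<\infty$, while $|\lambda-\lambda_K|\to0$.

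I expect the phase-reduction lemma to be the main obstacle: Theorem~\ref{thrm: implicit_domain_qp} cannot see the difference between $\Sigma$ and $\sigma$, and it is precisely the coprimality of $p_l$ and $q_l$, and the resulting $q_l^{-1}$-periodicity in the phase, that collapses the full torus of translates onto an $O(q_l^{-1})$ window. I note finally that this lemma already delivers the cleaner uniform estimate $d(\lambda,\sigma(A_{\theta_n}))\le C(1+|\lambda|)q_n^{-1}$ without any telescoping—one application of Theorem~\ref{thrm: implicit_domain_qp} between $\theta$ and $\theta_n$ followed by the lemma—and this in turn implies the stated bound, whose summed right-hand side already has $q_n^{-1}$ as its leading term.
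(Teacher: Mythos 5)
Your proposal is correct, but it reaches the result by a genuinely different route from the paper. The paper's proof does \emph{not} reuse Theorem~\ref{thrm: implicit_domain_qp}: it proves a separate Lemma~\ref{lem: explicit_domain_qp} by explicitly constructing compactly supported approximate eigenfunctions, taking a Floquet--Bloch eigenmode $v=e^{\i kx}p(x)$ of the periodic operator $A_{\theta_l}$ and multiplying by a cut-off $h_l$ anchored at the origin of width $\sim q_l$; because the cut-off sits at $x=0$ (rather than at the uncontrolled translate $\tau$ produced by the Fubini averaging in Theorem~\ref{thrm: implicit_domain_qp}), no phase shift is introduced and the estimate lands directly in $\sigma(A_{\theta_m})$ rather than $\Sigma(A_{\theta_m})$, at the price of needing the two-sided norm equivalence \eqref{eq: sobolev_bound} in place of the averaging identity. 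You instead treat Theorem~\ref{thrm: implicit_domain_qp} as a black box (your observation that its constant is uniform in the base frequency, so the roles of $\theta$ and $\theta_l$ may be exchanged, is correct) and supply the missing ingredient through an arithmetic phase-reduction lemma: unitary equivalence under integer translations plus $\gcd(p_l,q_l)=1$ makes $\sigma(A_{\theta_l,(0,\tilde y_2)})$ periodic in $\tilde y_2$ with period $q_l^{-1}$, collapsing $\Sigma(A_{\theta_l})$ onto an $O(q_l^{-1})$ phase window and hence to within $C(1+|\lambda|)q_l^{-1}$ of $\sigma(A_{\theta_l})$. I verified this lemma and it is sound. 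Your approach buys a strictly cleaner conclusion: one forward application of Theorem~\ref{thrm: implicit_domain_qp} plus the phase lemma gives $d(\lambda,\sigma(A_{\theta_n}))\leq C(1+|\lambda|)q_n^{-1}$ for $\lambda\in\sigma(A_\theta)$, which dominates the sum--product expression in the statement (whose $k=n$ term is $q_n^{-1}$), so the telescoping descent — which mirrors the paper's recursion $d_{n-1}\leq C(1+|\lambda|+d_n)q_n^{-1}+d_n$ — is not even needed. The paper's construction, on the other hand, is self-contained at the level of $\sigma$ and generalises more directly to the tiling-based setting of Theorem~\ref{thrm: supercell_convergence_fibonacci}, where there is no torus of phases to average over. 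The only loose ends in your write-up are minor and standard: the ``$|\theta-\theta'|$ small enough'' hypothesis must be absorbed into the constant for finitely many small indices, and the elliptic-regularity constant must be uniform over the phase torus and over $l$ — both hold for the same reasons the paper already relies on.
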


The idea is similar to that of Theorem \ref{thrm: implicit_domain_qp}, the key difference being that we explicitly construct the approximate eigenfunctions and hence control their localisation. The proof of Theorem \ref{thrm: explicit_domain_qp} relies on the following lemma:

\begin{lemma}\label{lem: explicit_domain_qp}
    Under the assumptions and in the notation of Theorem \ref{thrm: explicit_domain_qp}, there exists some $C>0$ such that for every $\lambda \in \sigma(A_{\theta_l})$ and every $m>l$ we have
    \begin{equation*}
        d(\lambda, \sigma(A_{\theta_m})) \leq  C(1 + \lvert \lambda\rvert)\lvert \theta - \theta_l\rvert^{-\frac{1}{2}}.
    \end{equation*}
\end{lemma}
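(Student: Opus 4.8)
The plan is to exploit the fact that, since $\theta_l=p_l/q_l$ is rational, $A_{\theta_l}$ is genuinely periodic with period $q_l$, so that Floquet--Bloch theory furnishes, for each $\lambda\in\sigma(A_{\theta_l})$, an honest bounded Bloch solution $\psi$ of $A_{\theta_l}\psi=\lambda\psi$ with $\lvert\psi\rvert$ periodic. This is the structural improvement over Theorem~\ref{thrm: implicit_domain_qp}: instead of working with a general approximate eigenfunction carrying a residual $\epsilon$ and averaging over translates, one truncates an \emph{exact} solution and thereby controls the localisation explicitly. Concretely, I would fix a point $\tau_0$ in one period at which $\lvert\psi(\tau_0)\rvert$ is bounded below, set $u=h_{L,\tau_0}\psi$ with the cut-off $h_{L,\tau_0}$ of \eqref{eq: supercell_convergence_qp_1}, and feed $u/\lVert u\rVert$ as a trial state into the self-adjoint operator $A_{\theta_m}$. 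By Lemma~\ref{lem: approximate_eigenvectors} it then suffices to bound the relative residual $\lVert(A_{\theta_m}-\lambda)u\rVert/\lVert u\rVert$.

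Writing $(A_{\theta_m}-\lambda)u=[A_{\theta_l},h_{L,\tau_0}]\psi+(A_{\theta_m}-A_{\theta_l})u$, where the first term is exact because $(A_{\theta_l}-\lambda)\psi=0$, I would estimate the two contributions separately. The commutator is handled precisely as in \eqref{eq: supercell_convergence_qp_3}: every summand carries a factor $L^{-n}$ with $n\ge 1$, so together with the elliptic regularity bound $\lVert\psi\rVert_{H^k(\mathrm{supp}\,u)}\lesssim(1+\lvert\lambda\rvert)\lVert\psi\rVert_{L^2(\mathrm{supp}\,u)}$ and the normalisation $\lVert u\rVert\gtrsim L^{1/2}$ it contributes $\lesssim(1+\lvert\lambda\rvert)L^{-1}\lVert u\rVert$. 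For the coefficient mismatch I would invoke the Lipschitz estimate \eqref{eq: supercell_convergence_qp_9}, which on $\mathrm{supp}\,u\subset[\tau_0-L,\tau_0+L]$ gives $\lvert a_k(x,\theta_m x)-a_k(x,\theta_l x)\rvert\lesssim\lvert\theta_m-\theta_l\rvert(\lvert\tau_0\rvert+L)$; since $m>l$ forces $\lvert\theta_m-\theta_l\rvert\le\lvert\theta-\theta_l\rvert$, and the continued-fraction estimate yields $\lvert\theta-\theta_l\rvert<q_l^{-2}$ while $\lvert\tau_0\rvert\le q_l$, this term is $\lesssim(1+\lvert\lambda\rvert)q_l^{-1}\lVert u\rVert$, hence of lower order.

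Finally I would choose the supercell length $L=\lvert\theta-\theta_l\rvert^{1/2}$. With this choice the mismatch term, being $\lesssim(1+\lvert\lambda\rvert)q_l^{-1}$, is dominated by the commutator term $\lesssim(1+\lvert\lambda\rvert)L^{-1}=(1+\lvert\lambda\rvert)\lvert\theta-\theta_l\rvert^{-1/2}$ (note $q_l^{-1}<q_l<\lvert\theta-\theta_l\rvert^{-1/2}$), so that $d(\lambda,\sigma(A_{\theta_m}))\le C(1+\lvert\lambda\rvert)\lvert\theta-\theta_l\rvert^{-1/2}$, as claimed; the bound is governed entirely by the cut-off scale $L^{-1}$. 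The main obstacle I anticipate is the uniform lower bound $\lVert h_{L,\tau_0}\psi\rVert\gtrsim L^{1/2}$ underpinning the normalisation: one must place the centre $\tau_0$ inside a fundamental domain where $\lvert\psi\rvert$ stays bounded below (possible since $\lvert\psi\rvert$ is periodic and not identically zero), and then use continuity of $\psi$ on the short window $L=\lvert\theta-\theta_l\rvert^{1/2}\to 0$ to propagate this lower bound across the support. The other delicate point is ensuring the elliptic-regularity constant contributes only the single power of $(1+\lvert\lambda\rvert)$ recorded in the statement, uniformly in $L$ and $m$.
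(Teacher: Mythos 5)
Your skeleton is the same as the paper's: because $\theta_l$ is rational you take an exact Bloch solution $v=e^{\i kx}p(x)$ of $A_{\theta_l}v=\lambda v$, truncate with a smooth cutoff, split the residual into a commutator plus a coefficient-mismatch term, use elliptic regularity for the single factor $(1+\lvert\lambda\rvert)$, and control the mismatch via the continued-fraction estimate. The genuine error is at the decisive step, the choice of the truncation scale, and it stems from taking the exponent $-\tfrac12$ in the lemma at face value: that exponent is a sign typo. The paper's own proof ends with $\lVert (A_{\theta_m}-\lambda)h_l v\rVert^2\le C(1+\lvert\lambda\rvert)^2 q_l^{-2}\lVert h_l v\rVert^2$, i.e.\ the \emph{decaying} bound $d(\lambda,\sigma(A_{\theta_m}))\le C(1+\lvert\lambda\rvert)q_l^{-1}\sim C(1+\lvert\lambda\rvert)\lvert\theta-\theta_l\rvert^{1/2}$, and it is this bound that the proof of Theorem~\ref{thrm: explicit_domain_qp} invokes ($d(\lambda,\sigma(A_{\theta_{l+1}}))\le C(1+\lvert\lambda\rvert)q_{l+1}^{-1}$) to run the recursion $d_{n-1}\le C(1+\lvert\lambda\rvert+d_n)q_n^{-1}+d_n$. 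The divergent bound $C(1+\lvert\lambda\rvert)\lvert\theta-\theta_l\rvert^{-1/2}$ you set out to prove tends to infinity as $l\to\infty$, is weaker than trivial statements, and could never feed that recursion; the right response to the statement was to flag the typo, not to engineer a proof of the vacuous version.

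Concretely, your choice $L=\lvert\theta-\theta_l\rvert^{1/2}\to 0$ is the wrong regime. The correct window is \emph{large}, $L\sim q_l\sim\lvert\theta-\theta_l\rvert^{-1/2}$: the paper takes $h_l\equiv 1$ on $(-q_l,q_l)$, supported in $[-2q_l,2q_l]$ and centred at the origin, so the commutator contributes $\sim q_l^{-1}$ while the mismatch contributes $\lesssim\lvert\theta_m-\theta_l\rvert\cdot q_l\le q_l^{-1}$ (using $\lvert\theta_m-\theta_l\rvert\le \tfrac{1}{q_lq_{l+1}}$; note your inequality $\lvert\theta_m-\theta_l\rvert\le\lvert\theta-\theta_l\rvert$ is not quite right since consecutive convergents straddle $\theta$), and the two terms balance at $q_l^{-1}$. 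With a shrinking window your argument fails even on its own terms: for $L<1$ the Leibniz terms in \eqref{eq: supercell_convergence_qp_3} carry factors $L^{-n}$ with $n$ running up to the order of the operator, so the top derivative of the cutoff dominates (already $h''$ for a second-order operator), giving a relative residual $\sim L^{-2}=\lvert\theta-\theta_l\rvert^{-1}$ rather than the $L^{-1}$ you claim; your estimates therefore do not even reach $\lvert\theta-\theta_l\rvert^{-1/2}$. Finally, your normalisation $\lVert u\rVert\gtrsim L^{1/2}$ via a pointwise lower bound at $\tau_0$ plus continuity is both unnecessary and delicate: Bloch eigenfunctions can vanish at points, and the relevant modulus of continuity scales with $\lambda$, smuggling extra $\lambda$-dependence into the constant. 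The paper sidesteps this by letting the window contain whole periods and using the two-sided comparison \eqref{eq: sobolev_bound} with $\lVert p\rVert_{H^k((0,q_l))}$, which requires no pointwise nonvanishing at all.
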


\begin{proof}
    Let $\lambda \in \sigma(A_{\theta_l})$, from Floquet-Bloch theory we know that there exists some $k \in [0, \frac{2\pi}{q_l})$ and some periodic function $p(x)$ such that $v = e^{ikx}p(x)$ satisfies
    \begin{equation}\label{eq:Aeigen}
        A_{\theta_l}v(x) = \lambda v(x).
    \end{equation}
    We further define
    \begin{equation}\label{eq: transfer_function}
        f(x) = 
         \begin{cases}
          e^{-\frac{1}{x}},& x>0,\\
          0,& x\leq 0.
        \end{cases}
    \end{equation}
    It can be shown that $f$ is smooth. This function will serve as the primary building block for the following step- and bump functions. Consider now
    \begin{equation}\label{eq: step_function}
        g(x) = \frac{f(x)}{f(x) + f(1-x)}.
    \end{equation}
    As the denominator of $g$ never vanishes, $g$ is smooth and it holds that
    \begin{equation*}
        g(x)
        \begin{cases}
            1,& x\geq 1,\\
            0,& x \leq 0.
        \end{cases}
    \end{equation*}
    We now define
    \begin{equation*}
        h_l(x) = g\left(\frac{x-2q_l}{q_l}\right)g\left(\frac{2q_l-x}{q_l}\right).
    \end{equation*}
    It is not difficult to see that $h_l$ is smooth, $h_l\equiv1$ on $(-q_l, q_l)$, $h_l\equiv0$  outside of $[-2q_l, 2q_l]$ and that $\rvert h_l^{(n)}(x)\lvert \leq C_n q_l^{-n}$. Moreover, there exist $C_1,C_2>0$, independent of $l$ such that
    \begin{equation}\label{eq: sobolev_bound}
        C_1 \lVert p \rVert_{H^k((0,q_l))}^2 \leq \lVert h_l v \rVert_{H^k(\R)}^2 \leq C_2 \lVert p \rVert_{H^k((0,q_l))}^2.
    \end{equation}   
    We now have for every $\delta>0$
    \begin{equation}
        \int_\R \lvert(A_{\theta_l} - \lambda)h_l v\rvert^2 \d x = (1 + \delta)\int_\R \lvert h_l(A_{\theta_l} - \lambda)v\rvert^2 \d x + (1 + \delta^{-1})\int_\R \lvert[A_{\theta_l} - \lambda,h_l]v\rvert^2 \d x.
    \end{equation}
    As $v$ satisfies \eqref{eq:Aeigen}, the first term on the right-hand side of the previous equation vanishes. From \eqref{eq: supercell_convergence_qp_3}, we infer that
    \begin{equation}\label{eq: explicit_domain_qp_3}
        \lVert[A_{\theta_l}-\lambda, h_l] v\rVert_{L^2(0,q_l)}^2 \leq Cq_l^{-2}\lVert v \rVert^2_{H^k((0,q_l)},
    \end{equation}
    for some $C>0$ that does not depend on $l$. Due to elliptic regularity \cite{Taylor2011}, there exists some constant $C>0$ that can be chosen independently of $l$ such that 
    \begin{equation}\label{eq: explicit_domain_qp_4}
        \lVert p \rVert_{H^k(0,q_l)}^2\leq C (1 + \lvert \lambda \rvert)^2\lVert p \rVert_{L^2(0,q_l)}^2.
    \end{equation}
    Equation \eqref{eq: explicit_domain_qp_3}, \eqref{eq: sobolev_bound} and \eqref{eq: explicit_domain_qp_4} imply together that
    \begin{equation}
        \lVert (A_{\theta_l}-\lambda)h_lv\rVert^2 \leq C (1 + \lvert \lambda \rvert)^2q_l^{-2}\lVert h_lv \rVert^2,
    \end{equation}
    for  $C>0$, independent of $l$ by letting $\delta \rightarrow \infty$. Since for all $i$ we have
    \begin{equation}
        \lvert a_i(x,\theta x) - a_i(x, \theta_l)\rvert \leq \sup_{y} \lvert \nabla a_i(y) \rvert \lvert \theta_m - \theta_l\rvert  \lvert x \rvert,
    \end{equation}
    it follows that 
    \begin{equation}
        \lVert(A_{\theta_m} - A_{\theta_l})h_l v\lVert^2 \leq C(1 + \lvert \lambda \rvert)^{2}q_l^{-2}\lVert h_l v\rVert^2,
    \end{equation}
    where we used that $\lvert \theta_m - \theta_l\rvert \leq \frac{1}{q_lq_{l+1}}$. Altogether, this yields
    \begin{equation}
        \begin{split}
            \lVert (A_{\theta_m} - \lambda)h_l v \rVert ^2 &\leq (1 + \delta)\lVert(A_{\theta_m} - A_{\theta_l})h_l v\rVert^2 + (1 + \delta^{-1})\lVert (A_{\theta_l} - \lambda)h_l v\rVert ^2 \\
            &\leq C(1 + \lvert \lambda \rvert)^{2}q_l^{-2}\lVert h_l v \rVert^2.
        \end{split}
    \end{equation}
    This concludes the proof.
\end{proof}

\begin{proof}[Proof of Theorem \ref{thrm: explicit_domain_qp}]
    The first inequality of Theorem \ref{thrm: explicit_domain_qp} immediately follows from Lemma \ref{lem: explicit_domain_qp} by letting $m \to \infty$. For the converse direction, we need to keep track of how eigenvalues propagate upwards. Let $\lambda \in \sigma(A_{\theta_{l+2}})$, then by Lemma \ref{lem: explicit_domain_qp} we have
    \begin{equation}
        d(\lambda, \sigma(A_{\theta_{l+1}})) \leq C(1 + \lvert \lambda \rvert) q_{l+1}^{-1}.
    \end{equation}
    Hence, there exists some $\Tilde{\lambda} \in \sigma(A_{\theta_{l+1}})$ such that $d(\lambda, \Tilde{\lambda})\leq C(1 + \lvert \lambda \rvert)^k q_{l+1}^{-1} $. Therefore
    \begin{equation}
        \begin{split}
             d(\lambda, \sigma(A_{\theta_{l}})) &\leq  d(\Tilde{\lambda}, \sigma(A_{\theta_{l}})) + d(\lambda, \Tilde{\lambda})\\
             &\leq C(1 + \lvert \Tilde{\lambda} \rvert) q_{l}^{-1} + C(1 + \lvert \lambda \rvert) q_{l+1}^{-1}\\
             &\leq C(1 + \lvert \lambda \rvert + C(1 + \lvert \lambda \rvert) q_{l+1}^{-1}) q_{l}^{-1} + C(1 + \lvert \lambda \rvert) q_{l+1}^{-1}.
        \end{split}
    \end{equation}
    If we denote by $d_n$ the Hausdorff distance from $\lambda$ to $\sigma(A_{\theta_{n}})$,  with $d_0 = 0$, then this yields the recursion
    \begin{equation}
        d_{n - 1} \leq C(1 + \lvert \lambda \rvert + d_n) q_{n}^{-1} + d_{n},
    \end{equation}
    which admits the upper bound
    \begin{equation}
        d(n) \leq C(1 + \lvert \lambda \rvert)\sum_{k = n}^\infty q_k^{-1}\prod_{l = n}^{k-1}(1+Cq_{l}^{-1}), 
    \end{equation}
    where we use the convention $\prod_{l = n}^{n-1} = 1$. Note that this expression is indeed finite since $q_n$ grows at least as fast as the Fibonacci sequence. The claim now follows from Theorem~\ref{prop: spectrally_inclusive}.
\end{proof}

\subsection{Tiling-based operators}

A similar result holds for tiling-based operators, in this case the rate of convergence depends on the inflation rule. We treat the example of the golden mean Fibonacci tiling. To this end, we need the following Lemma.
\begin{lemma}\label{lem: fibonacci_finite_bits}
    Let $\mathcal{S}$ be a finite subsection of $\mathcal{F}_\infty$, such that all tiles remain intact. Then for any $n$ such that $\lvert \mathcal{S}\rvert \leq \lvert \mathcal{F}_{n-1} \rvert $, $\mathcal{S}$ can be found in the periodic arrangement of $\mathcal{F}_n$.
\end{lemma}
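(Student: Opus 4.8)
The plan is to argue entirely at the level of the Fibonacci word. I write $\varrho$ for the golden-mean substitution $a\mapsto ab$, $b\mapsto a$, set $\mathcal{F}_n=\varrho^n(a)$, and use throughout the two standard facts that $\mathcal{F}_n=\mathcal{F}_{n-1}\mathcal{F}_{n-2}$ and that, consequently, $\mathcal{F}_{k-1}$ is a prefix of $\mathcal{F}_k$ for every $k$. Reading $\mathcal{S}$ as a word in the tiles, the phrase ``found in the periodic arrangement of $\mathcal{F}_n$'' means that $\mathcal{S}$ is a factor of the periodic word $(\mathcal{F}_n)^\infty$. Since $\lvert\mathcal{S}\rvert\le\lvert\mathcal{F}_{n-1}\rvert<\lvert\mathcal{F}_n\rvert$ and all seams of $(\mathcal{F}_n)^\infty$ are identical, this is equivalent to $\mathcal{S}$ being a factor of the single square $\mathcal{F}_n\mathcal{F}_n$, which is what I will actually verify.

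The first step is to decompose the fixed occurrence of $\mathcal{S}$ inside $\mathcal{F}_\infty$ using the blocks $A:=\mathcal{F}_n$ and $B:=\mathcal{F}_{n-1}$. From $\mathcal{F}_{n+1}=AB$, $\mathcal{F}_{n+2}=ABA$, $\mathcal{F}_{n+3}=ABAAB$ one sees that this block decomposition of $\mathcal{F}_\infty$ is again the Fibonacci word, now over the alphabet $\{A,B\}$; in particular $B$ is never followed by $B$, so the only pairs of consecutive blocks are $AA$, $AB$ and $BA$. Because $\lvert\mathcal{S}\rvert\le\lvert\mathcal{F}_{n-1}\rvert$ while every block has length at least $\lvert\mathcal{F}_{n-1}\rvert$, the occurrence of $\mathcal{S}$ can meet at most two consecutive blocks (spanning three would force a middle block of length $\ge\lvert\mathcal{F}_{n-1}\rvert$ to sit strictly inside $\mathcal{S}$). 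Hence $\mathcal{S}$ lies inside a single block or is a factor of one of $AA$, $AB$, $BA$.

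It then remains to treat the three junctions. If $\mathcal{S}$ lies in a single block or across an $AA$ junction it is immediately a factor of $\mathcal{F}_n$ or of $\mathcal{F}_n\mathcal{F}_n$ (for a $B$ block one uses that $\mathcal{F}_{n-1}$ is a prefix of $\mathcal{F}_n$). For $AB=\mathcal{F}_n\mathcal{F}_{n-1}$ the same prefix property shows $AB$ is itself a prefix of $\mathcal{F}_n\mathcal{F}_n$, so we are done. The remaining junction $BA=\mathcal{F}_{n-1}\mathcal{F}_n$ is the crux: writing a straddling occurrence as $\mathcal{S}=uv$ with $u$ a suffix of $\mathcal{F}_{n-1}$ and $v$ a prefix of $\mathcal{F}_n$, I note that $v$ is short enough ($\lvert v\rvert\le\lvert\mathcal{F}_{n-1}\rvert$) to be a prefix of $\mathcal{F}_{n-1}$ too. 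I then locate $uv$ inside $\mathcal{F}_n\mathcal{F}_n=\mathcal{F}_{n-1}\mathcal{F}_{n-2}\mathcal{F}_{n-1}\mathcal{F}_{n-2}$ at the internal boundary between the first $\mathcal{F}_{n-1}$ and the following $\mathcal{F}_{n-2}$: to the left sits $\mathcal{F}_{n-1}$, supplying $u$, while the word to the right, $\mathcal{F}_{n-2}\mathcal{F}_{n-1}\mathcal{F}_{n-2}\cdots$, begins with $\mathcal{F}_{n-2}\mathcal{F}_{n-3}=\mathcal{F}_{n-1}$ for its first $\lvert\mathcal{F}_{n-1}\rvert$ letters, using that $\mathcal{F}_{n-3}$ is a prefix of $\mathcal{F}_{n-1}$, and this supplies $v$. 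Thus $\mathcal{S}$ is a factor of $\mathcal{F}_n\mathcal{F}_n$, hence of $(\mathcal{F}_n)^\infty$.

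I expect the $BA$ junction to be the only genuine obstacle. The reductions for $AA$ and $AB$ both rest on $\mathcal{F}_{n-1}$ being a \emph{prefix} of $\mathcal{F}_n$, but at a $BA$ junction the left block $\mathcal{F}_{n-1}$ is not a suffix of $\mathcal{F}_n$, so one cannot simply align $\mathcal{S}$ with the seam of $(\mathcal{F}_n)^\infty$; the overlap identity $\mathcal{F}_{n-2}\mathcal{F}_{n-3}=\mathcal{F}_{n-1}$ is exactly what rescues the argument by producing an internal copy of the required suffix--prefix pattern. The only other thing to check with care is the length bookkeeping, ensuring that $\mathcal{S}$ never spans three blocks and that the located prefix $v$ does not overrun the available $\lvert\mathcal{F}_{n-1}\rvert$ letters.
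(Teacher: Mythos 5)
Your proof is correct, and it takes a genuinely different route from the paper's. The paper argues by downward induction: starting from some $N$ with $\mathcal{S}\subset\mathcal{F}_N$, it passes from the periodic arrangement of $\mathcal{F}_{n+1}$ to that of $\mathcal{F}_n$, splitting into the case $\mathcal{S}\subset\mathcal{F}_{n+1}$ (where the prefix property $\mathcal{F}_{n+1}=\mathcal{F}_n\mathcal{F}_{n-1}\prec\mathcal{F}_n\mathcal{F}_n$ suffices) and the case where $\mathcal{S}$ straddles a seam of two $\mathcal{F}_{n+1}$ cells, in which case it is contained in a word of the form $\mathcal{F}_{n-1}\mathcal{F}_{n-1}$, which the paper then simply asserts to sit inside the periodic arrangement of $\mathcal{F}_n$. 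You instead argue directly, decomposing the fixed point $\mathcal{F}_\infty$ into blocks $A=\mathcal{F}_n$, $B=\mathcal{F}_{n-1}$ via $\varrho^n$, using that the block word is again Fibonacci (so $BB$ never occurs) to reduce to the three junctions $AA$, $AB$, $BA$. The payoff is that your treatment of the $BA$ junction --- locating $\mathcal{F}_{n-1}\mathcal{F}_{n-1}$ as the length-$2\lvert\mathcal{F}_{n-1}\rvert$ prefix of $\mathcal{F}_n\mathcal{F}_n=\mathcal{F}_{n-1}\mathcal{F}_{n-2}\mathcal{F}_{n-1}\mathcal{F}_{n-2}$ via the identity $\mathcal{F}_{n-2}\mathcal{F}_{n-3}=\mathcal{F}_{n-1}$ --- supplies exactly the justification that the paper's final sentence glosses over, so your argument is the more complete of the two; the paper's induction, in exchange, avoids invoking the self-similarity of the block decomposition. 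The only loose end in your write-up is the usual small-index bookkeeping: the identities $\mathcal{F}_{n-1}=\mathcal{F}_{n-2}\mathcal{F}_{n-3}$ and the prefix property require $n$ to be at least $3$ or so (depending on the indexing convention), and the base cases should be checked by hand.
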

\begin{proof}
    We are going to proceed by induction. Since $\lvert\mathcal{S}\rvert < \infty$, there exists some $N \in \N$ such that $\mathcal{S}$ is contained in $\mathcal{F}_N$, and hence in particular in a periodic arrangement of $\mathcal{F}_N$. 

    Suppose the statement holds for $n+1$ and assume that $\lvert\mathcal{S}\rvert \leq \lvert \mathcal{F}_{n-1}\rvert $. We are going to discern two cases. If $\mathcal{S}$ is contained within $\mathcal{F}_{n+1}$, the statement is trivial since $\mathcal{F}_{n+1}$ is contained in the periodic arrangement of $\mathcal{F}_{n}$. On the other hand, if $\mathcal{S}$ is not a subset of $\mathcal{F}_{n+1}$, it must lie close to the boundary of two subsequent $\mathcal{F}_{n+1}$ cells. In particular, since $\lvert\mathcal{S}\rvert \leq \lvert \mathcal{F}_{n-1}\rvert $, $\lvert \mathcal{S}\rvert $ must be contained in two subsequent cells of $\mathcal{F}_{n-1}$, which in turn are contained in a periodic arrangement of $\mathcal{F}_n$.
\end{proof}
We can now state the supercell convergence Theorem for golden mean Fibonacci tilings.
\begin{theorem}\label{thrm: supercell_convergence_fibonacci}
    Let $A_n$ be a sequence of self-adjoint elliptic operators with coefficients based on the golden mean Fibonacci tiling as outlined in Subsection \ref{subsec: tilings}. Denote $A_\infty$ the limiting operator based on the even numbered tiles. There exists some $C>0$ such that the following hold
    \begin{equation*}
        \begin{split}
            d(\lambda, \sigma(A_\infty)) &< C(1 + \lvert \lambda \rvert)\mathcal{F}_{2n-1}, \quad \text{for all } \lambda \in \sigma(A_n),\\
            d(\lambda, \sigma(A_n)) &< C(1 + \lvert \lambda \rvert)\mathcal{F}_{2n-1}, \quad \text{for all } \lambda \in \sigma(A_\infty).
        \end{split}
    \end{equation*}
\end{theorem}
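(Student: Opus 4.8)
The plan is to mirror the template of Lemma~\ref{lem: explicit_domain_qp} and Theorem~\ref{thrm: explicit_domain_qp}, replacing the continued-fraction length scale $q_l$ by the Fibonacci length scale $\mathcal{F}_{2n-1}$ (the right-hand sides being understood as the inverse Fibonacci length $\mathcal{F}_{2n-1}^{-1}$, in analogy with the $q_l^{-1}$ scaling there) and, crucially, replacing the quantitative coefficient comparison \eqref{eq: supercell_convergence_qp_9} by an \emph{exact} matching of coefficients on a central window. The essential structural input is Lemma~\ref{lem: fibonacci_finite_bits}: a subsection of $\mathcal{F}_\infty$ of length at most $\lvert\mathcal{F}_{2n-1}\rvert$ occurs verbatim inside the periodic arrangement defining $A_n$. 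Hence on a window $W_n$ of length comparable to $\mathcal{F}_{2n-1}$, suitably centred, the coefficients of $A_n$ and $A_\infty$ coincide identically, so the two operators differ only outside $W_n$. Since every approximate eigenfunction will be localised inside $W_n$, the only source of error is the commutator of the operator with a smooth cutoff, exactly as in \eqref{eq: supercell_convergence_qp_3} and \eqref{eq: explicit_domain_qp_3}. As in Lemma~\ref{lem: explicit_domain_qp}, I would build a smooth plateau $h_n$ from the step function $g$ of \eqref{eq: step_function}, with $h_n\equiv1$ on the inner half of $W_n$, $h_n\equiv0$ outside $W_n$, and $\lvert h_n^{(j)}\rvert\leq C_j\mathcal{F}_{2n-1}^{-j}$.

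For the first inequality, take $\lambda\in\sigma(A_n)$. Since $A_n$ is periodic, Floquet--Bloch theory gives a Bloch solution $v=e^{\i kx}p(x)$ of $A_n v=\lambda v$ with $p$ periodic. On $\operatorname{supp}h_n\subset W_n$ the coefficients of $A_n$ and $A_\infty$ agree, so there $(A_\infty-\lambda)(h_n v)=[A_\infty,h_n]v=[A_n,h_n]v$, while the term $h_n(A_n-\lambda)v$ vanishes because $v$ is an exact eigenfunction. Estimating the commutator via \eqref{eq: supercell_convergence_qp_3}, controlling $\lVert p\rVert_{H^k}$ by $(1+\lvert\lambda\rvert)\lVert p\rVert_{L^2}$ through elliptic regularity, and using a two-sided Sobolev comparison of the type \eqref{eq: sobolev_bound}, yields $\lVert(A_\infty-\lambda)h_n v\rVert\leq C(1+\lvert\lambda\rvert)\mathcal{F}_{2n-1}^{-1}\lVert h_n v\rVert$. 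As $h_n v\neq0$, Lemma~\ref{lem: approximate_eigenvectors} gives $d(\lambda,\sigma(A_\infty))\leq C(1+\lvert\lambda\rvert)\mathcal{F}_{2n-1}^{-1}$.

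For the second inequality, take $\lambda\in\sigma(A_\infty)$ and use Lemma~\ref{lem: approximate_eigenvectors} to choose a normalised $u$ with $\lVert(A_\infty-\lambda)u\rVert<\epsilon$. Since $u$ need not be localised, I would localise it first: multiplying by the translated cutoffs $h_n(\cdot-\tau)$ and averaging over $\tau$ as in \eqref{eq: supercell_convergence_qp}, there is a translate whose support lies in a copy of $W_n$ and which retains a definite fraction of the mass, so that $\lVert(A_\infty-\lambda)h_n(\cdot-\tau)u\rVert\leq(C(1+\lvert\lambda\rvert)\mathcal{F}_{2n-1}^{-1}+\epsilon)\lVert h_n(\cdot-\tau)u\rVert$. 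Because this support sits inside the matching window, $A_\infty$ may be replaced by $A_n$ with no further error, so $h_n(\cdot-\tau)u$ is an approximate eigenfunction of the periodic operator $A_n$; letting $\epsilon\to0$ and invoking Lemma~\ref{lem: approximate_eigenvectors} once more gives $d(\lambda,\sigma(A_n))\leq C(1+\lvert\lambda\rvert)\mathcal{F}_{2n-1}^{-1}$.

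The main obstacle is not the functional-analytic machinery, which is essentially that of Lemma~\ref{lem: explicit_domain_qp}, but making the geometric matching uniform in $n$. One must verify that the periodic repetition of the $\mathcal{F}_{2n}$ pattern and the bi-infinite limit $A_\infty$ built from the even-indexed words genuinely agree on a window whose length is bounded below by a fixed multiple of $\mathcal{F}_{2n-1}$, with $h_n$ fitting inside it while still capturing a constant fraction of the eigenfunction's mass; this is exactly where Lemma~\ref{lem: fibonacci_finite_bits} and the self-similar growth ratio $\mathcal{F}_{2n}/\mathcal{F}_{2n-1}\to\varphi$ are needed to keep all constants independent of $n$. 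Care is also required at the seam of the periodic arrangement, since it is precisely there that $A_n$ departs from $A_\infty$; centring $W_n$ away from the seam (or, in the averaging argument, discarding the bad translates) resolves this.
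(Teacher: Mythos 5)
Your proposal follows essentially the same route as the paper: both use Lemma~\ref{lem: fibonacci_finite_bits} to locate a window of length $\mathcal{F}_{2n-1}$ on which the coefficients of $A_n$ and $A_\infty$ coincide exactly, cut off with the smooth plateau function of width $L=\mathcal{F}_{2n-1}$ from the earlier proofs, and observe that the coefficient-difference term in \eqref{eq: supercell_convergence_qp_8} vanishes so that only the commutator error $\mathcal{O}(\mathcal{F}_{2n-1}^{-1})$ survives. Your reading of the right-hand side as $\mathcal{F}_{2n-1}^{-1}$ and your attention to the seam of the periodic arrangement are both consistent with (indeed slightly more careful than) the paper's own argument.
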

\begin{proof}
    The proof is similar to that of Theorem~\ref{thrm: implicit_domain_qp}, however instead of looking for areas where the coefficients of $A_n$ and $A_\infty$ are close, we search for areas where they are identical. By Lemma~\ref{lem: fibonacci_finite_bits}, these are exactly the finite sections of size $\mathcal{F}_{2n-1}$. Let $h$ be as in the proof of Theorem~\ref{thrm: implicit_domain_qp}. We let $L = \mathcal{F}_{2n-1}$ and $\tau = 0$. The estimate is then identical to that of Theorem~\ref{thrm: explicit_domain_qp}, with the difference that the second term on the right-hand side of \eqref{eq: supercell_convergence_qp_8} is zero.
\end{proof}

Tiling based structure make for excellent models for computational and experimental studies, as their main band gaps are quickly identified. The following recurrence relation was discovered in \cite{Kohmoto1983}.

\begin{lemma}\label{lem: trace_recursion}
    Let $T_n(\lambda)$ denote the transfer matrix associated to $A_n - \lambda$ and define $x_n(\lambda) = \operatorname{tr}(T_n(\lambda))$. The following recursion relation holds for $n\geq 2$:
    \begin{equation*}
        x_{n+1}(\lambda) = x_{n}(\lambda)x_{n-1}(\lambda) - x_{n-2}(\lambda).
    \end{equation*}
\end{lemma}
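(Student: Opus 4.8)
The plan is to reduce the recursion to two structural facts: the multiplicativity of transfer matrices under concatenation of tiles, and a standard trace identity for $2\times 2$ matrices of unit determinant. First I would record that, for a second-order (Schr\"odinger- or Helmholtz-type) operator, solutions of $A_n u = \lambda u$ propagate through each tile by a $2\times 2$ transfer matrix whose determinant equals the conserved Wronskian, normalised to $1$; hence $T_n(\lambda) \in SL_2(\C)$ for every $n$ and every $\lambda$. This is the implicit setting for the Kohmoto trace map, and it guarantees that $\det T_n = 1$, which is exactly the hypothesis needed for the trace identity below. The second ingredient is the defining concatenation rule of the golden mean Fibonacci word, $\mathcal{F}_{n+1} = \mathcal{F}_n\mathcal{F}_{n-1}$. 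Since propagating first through the block $\mathcal{F}_n$ and then through $\mathcal{F}_{n-1}$ multiplies the corresponding transfer matrices in reverse order, this yields the matrix recursion
\begin{equation*}
    T_{n+1}(\lambda) = T_{n-1}(\lambda)\,T_n(\lambda), \qquad n \geq 2.
\end{equation*}

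Next I would invoke the Cayley--Hamilton identity for a matrix $B \in SL_2(\C)$, namely $B + B^{-1} = \tr(B)\,I$, which upon multiplying by any $A$ and taking traces gives
\begin{equation*}
    \tr(AB) + \tr(AB^{-1}) = \tr(A)\tr(B).
\end{equation*}
Applying this with $A = T_{n-1}$ and $B = T_n$, and writing $x_n = \tr(T_n)$, produces $x_{n+1} = x_{n-1}x_n - \tr(T_{n-1}T_n^{-1})$. The remaining task is to identify the correction term. Using the matrix recursion one step lower, $T_n = T_{n-2}T_{n-1}$, gives $T_n^{-1} = T_{n-1}^{-1}T_{n-2}^{-1}$, so that $T_{n-1}T_n^{-1} = T_{n-2}^{-1}$. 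Since every $T_k$ lies in $SL_2(\C)$ its eigenvalues are reciprocal, whence $\tr(T_{n-2}^{-1}) = \tr(T_{n-2}) = x_{n-2}$. Substituting this back yields exactly $x_{n+1} = x_n x_{n-1} - x_{n-2}$, which is the claimed relation, valid for $n \geq 2$ where all three indices are defined.

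The computation is short once the framework is in place, so I do not expect a genuine analytic obstacle; the delicate points are bookkeeping rather than estimation. The first is fixing the ordering convention consistently: whether concatenation $uv$ of words corresponds to $T_v T_u$ or $T_u T_v$ determines whether one writes $T_{n+1} = T_{n-1}T_n$ or $T_n T_{n-1}$, and one must check that the final trace recursion is symmetric enough that either convention delivers the same scalar identity (it is, since $\tr(T_{n-1}T_n) = \tr(T_n T_{n-1})$, and the inverse-trace term is unaffected). The second, more substantive, point is the justification of $\det T_n = 1$, which restricts the lemma to operators of the appropriate second-order type; I would state this restriction explicitly and attribute the constancy of the determinant to conservation of the Wronskian. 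Everything else follows from the two displayed identities above, so the proof is essentially a verification that the Fibonacci concatenation rule is compatible with the $SL_2$ trace map of \cite{Kohmoto1983}.
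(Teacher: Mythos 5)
Your proof is correct: it is the standard derivation of the Kohmoto trace map, combining the concatenation rule $\mathcal{F}_{n+1}=\mathcal{F}_n\mathcal{F}_{n-1}$ (hence $T_{n+1}=T_{n-1}T_n$) with the $SL_2$ identity $\tr(AB)+\tr(AB^{-1})=\tr(A)\tr(B)$ and the observation that $T_{n-1}T_n^{-1}=T_{n-2}^{-1}$ has trace $x_{n-2}$. The paper itself offers no proof, simply attributing the recursion to \cite{Kohmoto1983}, so there is nothing to compare against; your explicit flagging of the unimodularity hypothesis ($\det T_n=1$ via Wronskian conservation) is a worthwhile addition, since the paper leaves the precise class of operators $A_n$ implicit in this subsection.
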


In \cite{Morini2018} it was observed that the main band gaps of operators based on Fibonacci tilings occur relatively early on in the sequence and persist in the limit of large unit cell size. This lead to the notion of \textit{super band gaps} $\mathcal{S}_N$, the set of eigenvalues that are in spectral gaps for all the operators with an index greater than $N$:
\begin{equation*}
    \mathcal{S}_N = \left\{\lambda \in \R\mid \lambda\notin\sigma(A_n) \text{ for all }
    n\geq N\right\}.
\end{equation*}
Using Lemma~\ref{lem: trace_recursion}, we can see that $\lambda$ being in a super band gap is equivalent to $\lvert x_n(\omega)\rvert >2$ for all $n$ greater than some $N$. The following criterion for deciding whether some $\lambda$ lies in a super band gap was proven in \cite[Theorem~3.1]{davies2023super}.

\begin{theorem}
    Let $\lambda \in \R$ and suppose $x_n(\lambda)$ satifies the recurrence relation of Lemma~\ref{lem: trace_recursion}. Suppose that there exists some $N\in \N$ such that
    \begin{equation}
        \lvert x_N(\lambda)\rvert >2, \lvert x_{N+1}(\lambda) \lvert \geq \lvert x_N(\lambda)\rvert \text{ and } \lvert x_{N+2}(\lambda) \lvert \geq \lvert x_{N+1}(\lambda)\rvert.
    \end{equation}
    Then $\lvert x_{n+1}\rvert > \lvert x_n \rvert$ for all $n \geq N$. Consequently, $\lvert x_n \rvert > 2$ for all $n \geq N$, meaning $\lambda$ lies in the super band gap $\mathcal{S}_N$.
\end{theorem}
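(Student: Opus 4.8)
The plan is to argue by induction, the engine being the reverse triangle inequality applied to the Kohmoto recurrence $x_{n+1} = x_n x_{n-1} - x_{n-2}$ from Lemma~\ref{lem: trace_recursion}. Since the operators are self-adjoint and $\lambda$ is real, the associated (unimodular) transfer matrices are real, so each $x_n(\lambda)$ is real and $\lvert\cdot\rvert$ denotes the ordinary absolute value throughout.

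First I would extract the base data from the hypotheses: $\lvert x_N\rvert > 2$ together with $\lvert x_{N+1}\rvert \geq \lvert x_N\rvert$ and $\lvert x_{N+2}\rvert \geq \lvert x_{N+1}\rvert$ gives
\[
\lvert x_{N+2}\rvert \geq \lvert x_{N+1}\rvert \geq \lvert x_N\rvert > 2 .
\]
This non-decreasing triple lying strictly above $2$ is the invariant I will propagate forward: for three consecutive indices the moduli are non-decreasing and all exceed $2$.

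The heart of the argument is a single estimate. Assume $\lvert x_n\rvert \geq \lvert x_{n-1}\rvert \geq \lvert x_{n-2}\rvert > 2$. Applying the reverse triangle inequality to the recurrence yields
\[
\lvert x_{n+1}\rvert \;=\; \lvert x_n x_{n-1} - x_{n-2}\rvert \;\geq\; \lvert x_n\rvert\,\lvert x_{n-1}\rvert - \lvert x_{n-2}\rvert .
\]
To see this exceeds $\lvert x_n\rvert$, it suffices to check $\lvert x_n\rvert\,(\lvert x_{n-1}\rvert - 1) > \lvert x_{n-2}\rvert$; but $\lvert x_{n-1}\rvert - 1 > 1$ since $\lvert x_{n-1}\rvert > 2$, so the left-hand side is strictly larger than $\lvert x_n\rvert$, which in turn is at least $\lvert x_{n-2}\rvert$. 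Hence $\lvert x_{n+1}\rvert > \lvert x_n\rvert \geq \lvert x_{n-1}\rvert > 2$, and the invariant holds for the next triple $(x_{n-1}, x_n, x_{n+1})$, closing the induction.

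Running the induction from the base triple at index $N$ then shows $\lvert x_n\rvert \geq \lvert x_N\rvert > 2$ for every $n \geq N$ (with strict growth $\lvert x_{n+1}\rvert > \lvert x_n\rvert$ produced by the recurrence once $n \geq N+2$). By the characterisation recalled just before the statement --- that $\lambda$ lies in the super band gap $\mathcal{S}_N$ exactly when $\lvert x_n(\lambda)\rvert > 2$ for all $n \geq N$ --- this places $\lambda$ in $\mathcal{S}_N$. The only delicate point is strict-versus-nonstrict bookkeeping: the recurrence only manufactures \emph{strict} growth after all three preceding moduli exceed $2$, so the strict inequality genuinely appears from $n = N+2$ onward while the first two steps are handed over directly by the hypotheses; I would make sure the factorisation above is carried out so that the inequality $\lvert x_n\rvert(\lvert x_{n-1}\rvert - 1) > \lvert x_{n-2}\rvert$ is strict, which is where the assumption $\lvert x_{n-1}\rvert > 2$ (rather than merely $\geq 2$) is essential.
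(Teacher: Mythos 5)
Your argument is correct. Note that the paper does not actually supply its own proof of this theorem --- it is quoted from \cite[Theorem~3.1]{davies2023super} --- but your induction, propagating the invariant that three consecutive moduli are non-decreasing and exceed $2$ via the reverse triangle inequality $\lvert x_{n+1}\rvert \geq \lvert x_n\rvert\,\lvert x_{n-1}\rvert - \lvert x_{n-2}\rvert$, is the standard argument and is exactly the technique the authors do use for the corollary that immediately follows (where the same key estimate $\lvert x_{n+3}\rvert \geq \lvert x_{n+2}\rvert\,\lvert x_{n+1}\rvert - \lvert x_n\rvert$ appears). One point worth recording explicitly: the stated conclusion $\lvert x_{n+1}\rvert > \lvert x_n\rvert$ for \emph{all} $n \geq N$ cannot be obtained in strict form at $n = N$ and $n = N+1$, since the hypotheses only provide non-strict inequalities there; you correctly flag this bookkeeping issue, and the substantive conclusion $\lvert x_n\rvert > 2$ for all $n \geq N$, hence $\lambda \in \mathcal{S}_N$, is unaffected.
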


If we assume our tiles to be piece-wise constant, as is assumed in the widely studied setting of laminates, the following refinement can be obtained.

\begin{corollary}
    Let $\omega\in\R$ and consider $x_n(\omega)$ satisfying the golden mean recursion. Then $\omega$ lies in the super band gap $\mathcal{S}_N$ if and only if there exists $n \leq N$ such that two consecutive values $\lvert x_n \rvert, \lvert x_{n+1}\rvert > 2+\epsilon$ for some $\epsilon>0$.
\end{corollary}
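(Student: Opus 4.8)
The plan is to translate both sides into statements about the trace sequence $x_n(\omega)$ and then run a minimality argument on the index. First, recall from the discussion preceding the corollary that, for the periodic approximant $A_n$, Floquet--Bloch theory gives $\lambda\notin\sigma(A_n)$ precisely when $\lvert x_n(\lambda)\rvert>2$; hence $\mathcal{S}_N=\{\omega\in\R\mid \lvert x_m(\omega)\rvert>2 \text{ for all } m\ge N\}$. With this reformulation the forward implication is immediate: if $\omega\in\mathcal{S}_N$ then $\lvert x_N\rvert,\lvert x_{N+1}\rvert>2$, so taking $n=N\le N$ and $\epsilon=\min(\lvert x_N\rvert,\lvert x_{N+1}\rvert)-2>0$ yields two consecutive values exceeding $2+\epsilon$.

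For the reverse implication, suppose some $n\le N$ has $\lvert x_n\rvert,\lvert x_{n+1}\rvert>2$, and let $n_\ast$ be the smallest index with this property, so that $n_\ast\le N$. The key observation is that minimality forces $\lvert x_{n_\ast-1}\rvert\le2$, since otherwise $\lvert x_{n_\ast-1}\rvert,\lvert x_{n_\ast}\rvert>2$ would be an earlier consecutive pair. I would then feed this single bound into the recursion of Lemma~\ref{lem: trace_recursion}. From $x_{n_\ast+2}=x_{n_\ast+1}x_{n_\ast}-x_{n_\ast-1}$ and $\lvert x_{n_\ast-1}\rvert\le2$ one gets $\lvert x_{n_\ast+2}\rvert\ge\lvert x_{n_\ast+1}\rvert\lvert x_{n_\ast}\rvert-2$, and since both factors exceed $2$ this gives $\lvert x_{n_\ast+2}\rvert>\max(\lvert x_{n_\ast}\rvert,\lvert x_{n_\ast+1}\rvert)>2$. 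Applying the recursion once more, $x_{n_\ast+3}=x_{n_\ast+2}x_{n_\ast+1}-x_{n_\ast}$ together with $\lvert x_{n_\ast+1}\rvert>2$ and $\lvert x_{n_\ast+2}\rvert>\lvert x_{n_\ast}\rvert$ yields $\lvert x_{n_\ast+3}\rvert>\lvert x_{n_\ast+2}\rvert$. Thus the triple at indices $n_\ast+1,n_\ast+2,n_\ast+3$ satisfies exactly the hypotheses $\lvert x_M\rvert>2$, $\lvert x_{M+1}\rvert\ge\lvert x_M\rvert$, $\lvert x_{M+2}\rvert\ge\lvert x_{M+1}\rvert$ of the preceding theorem with $M=n_\ast+1$. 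That theorem then gives $\lvert x_m\rvert>2$ for all $m\ge n_\ast+1$; combined with $\lvert x_{n_\ast}\rvert>2$ this shows $\lvert x_m\rvert>2$ for all $m\ge n_\ast$, and since $n_\ast\le N$ we conclude $\omega\in\mathcal{S}_N$.

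The piecewise-constant (laminate) hypothesis enters because each tile then has an explicit constant-coefficient transfer matrix in $SL_2(\R)$, so the Fibonacci concatenation produces genuine matrix products and the Kohmoto trace recursion of Lemma~\ref{lem: trace_recursion} holds exactly; this exactness is what licenses the algebraic manipulations above, whereas for general smooth tiles one would expect correction terms. The one point requiring care is the low-index base case: the argument uses $x_{n_\ast-1}$ and invokes the recursion, which is valid only for $n_\ast\ge1$, so the edge case $n_\ast=0$ (where $x_{n_\ast-1}$ is undefined) must be dispatched separately by direct inspection of the explicit seed traces $x_0,x_1,x_2$. I expect this bookkeeping, rather than any genuine analytic difficulty, to be the only real obstacle, since the minimality trick already removes the need to control the otherwise free value of $x_{n_\ast-1}$.
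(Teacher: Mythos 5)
Your argument is correct and rests on the same key observation as the paper's: both proofs arrange that the trace immediately preceding the first escaping pair is at most $2$ (you via minimality of $n_\ast$, the paper via the equivalent ``reduce $n$ by one until $\lvert x_n\rvert\le 2$'' reduction), and then feed that single bound into the Kohmoto recursion. The difference is the finishing move. The paper runs a self-contained induction showing $\lvert x_{n+3}\rvert\ge(1+\epsilon)\lvert x_{n+2}\rvert$, so the traces grow geometrically; this genuinely uses the $2+\epsilon$ in the hypothesis and yields a quantitative escape rate. You instead verify the three hypotheses of the preceding theorem at index $n_\ast+1$ and invoke it; this is arguably cleaner (it needs only strict inequality $>2$, never the $\epsilon$, and reuses a stated result) but produces no rate. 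Two smaller points. First, the edge case you flag but defer ($n_\ast$ too small for $x_{n_\ast-1}$ to exist) is precisely where the paper uses the piecewise-constant hypothesis: for a constant-coefficient tile the single-tile transfer matrix has trace $2\cos(\omega)$, so $\lvert x_1(\omega)\rvert\le 2$ and the minimal escaping pair automatically starts at an index where the recursion is available; you should carry this out rather than leave it as bookkeeping. Second, your stated reason for needing the laminate assumption (exactness of the trace recursion) is misplaced --- the recursion of Lemma~\ref{lem: trace_recursion} holds for arbitrary $SL_2(\R)$ transfer matrices by Cayley--Hamilton; the laminate structure is needed only for the explicit bound on the seed traces that dispatches your edge case.
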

\begin{proof}
    Let $n$ be such that $\lvert x_{n+1} \rvert, \lvert x_{n+2}\rvert > 2+\epsilon$. Since $\lvert x_1(\omega)\rvert = 2\lvert \cos\left(\omega\right)\rvert \leq 2 $, we may assume without loss of generality that $\lvert x_n \rvert \leq 2$, or we reduce $n$ by one. We then have
    \begin{equation}
        \begin{split}
            \lvert x_{n+3} \rvert &\geq \lvert x_{n+2}\rvert \lvert x_{n+1} \rvert - \lvert x_n\rvert\\
            &\geq \lvert x_{n+2}\rvert \lvert x_{n+1} \rvert - \lvert x_{n+2}\rvert\\
            &\geq \lvert x_{n+2} \rvert \left(1+\epsilon\right)
        \end{split}
    \end{equation}
    and by induction it follows that $\lvert x_N \rvert > 2$ for $N>n$. The converse direction immediately follows from the definition of super band gaps.
\end{proof}

\subsection{Numerical examples} \label{sec:superspace_numerics}
For illustrative purposes, we begin with the most well-understood example of a Schr\"odinger operator with a quasiperiodic potential. We define
\begin{equation}
    V(x,y) = \sin( 2 \pi x) +  \sin(2 \pi y),
\end{equation}
and consider the eigenvalue problem
\begin{equation} \label{eq: schrodinger_problem}
    A_\theta u =\lambda u,
\end{equation}
for the operator
\begin{equation}
    A_\theta = - \Delta + V(x,\theta x),
\end{equation}
where $\theta =  (1+\sqrt{5})/2$.
\begin{figure}
     \centering
     \begin{subfigure}[b]{0.55\textwidth}
         \centering
         \includegraphics[height=2.7cm,clip,trim=8cm 0 7cm 0]{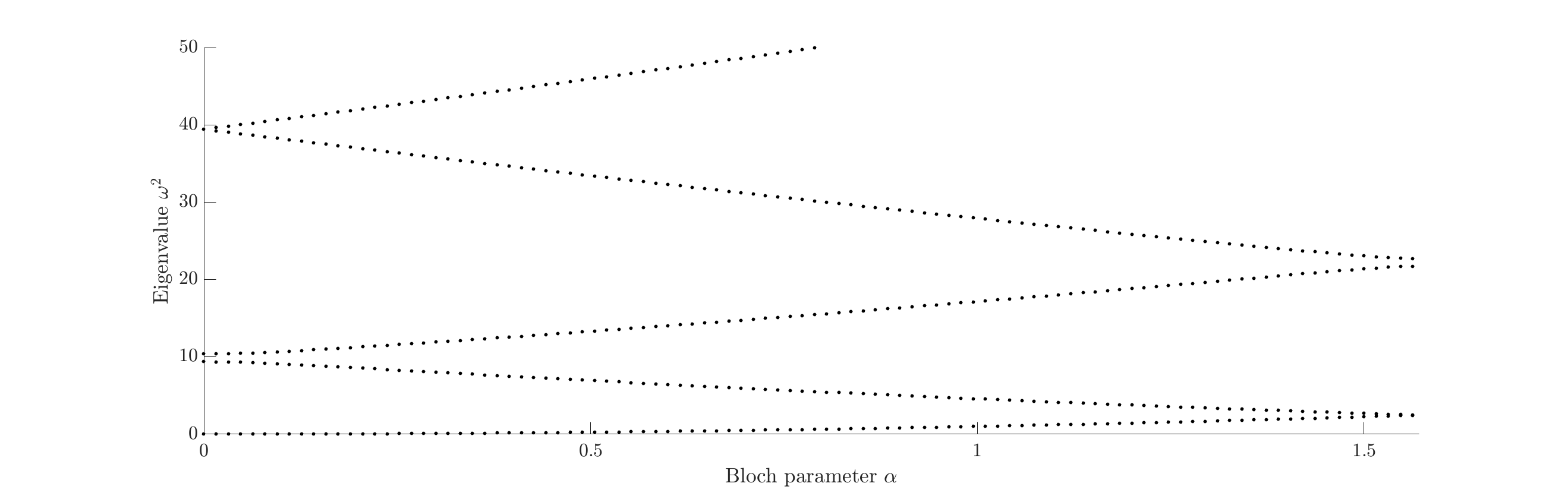}
         \caption{$\theta = \frac{3}{2}$ }
         \label{fig: qp_schrodinger_1}
     \end{subfigure}
     \begin{subfigure}[b]{0.245\textwidth}
         \centering
        \includegraphics[height=2.7cm]{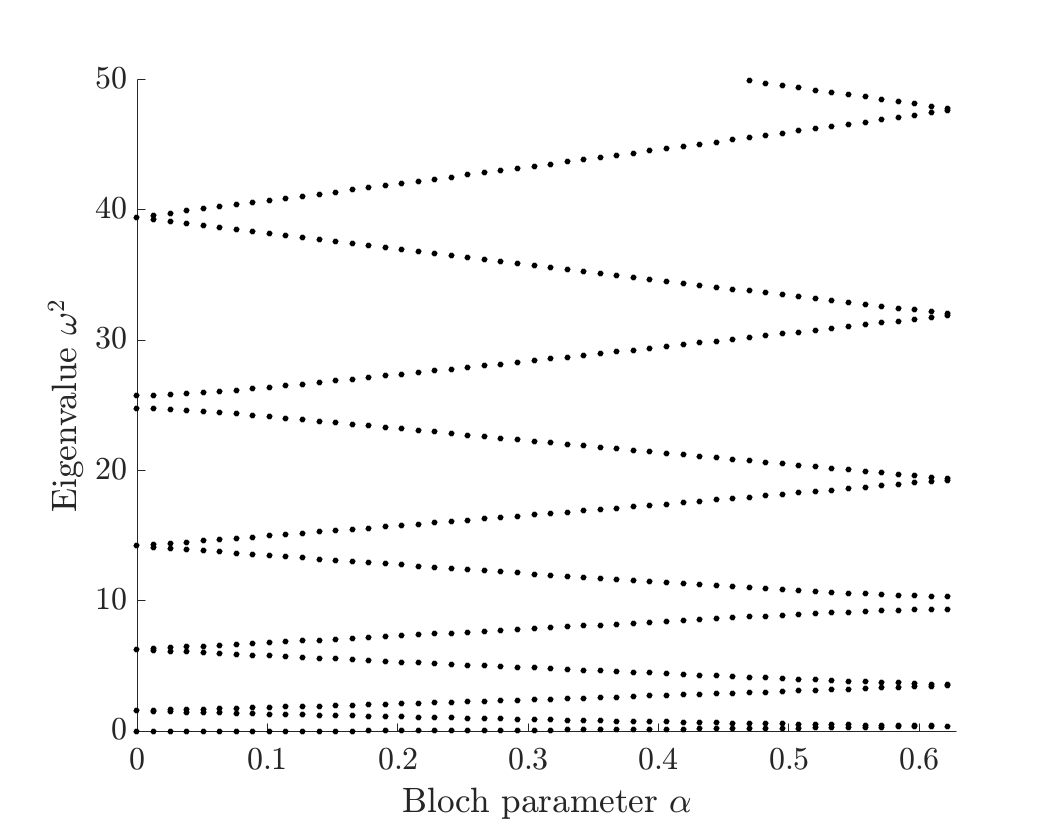}
         \caption{$\theta = \frac{8}{5}$}
         \label{fig: qp_schrodinger_2}
     \end{subfigure}
     \begin{subfigure}[b]{0.15\textwidth}
         \centering
        \includegraphics[height=2.7cm]{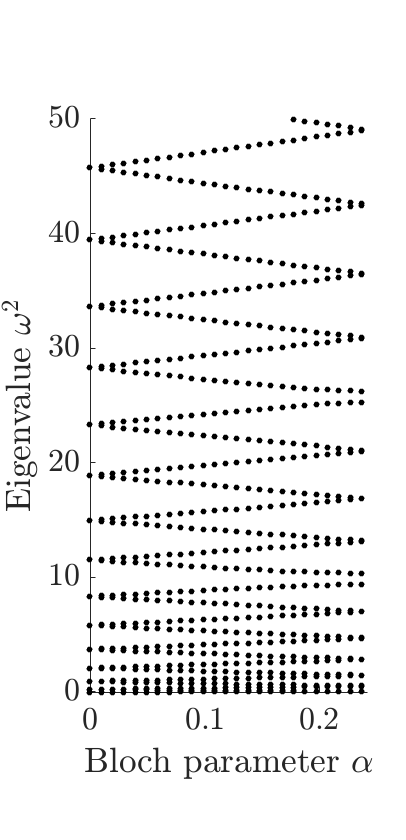}
         \caption{$\theta = \frac{21}{13}$}
         \label{fig: qp_schrodinger_3}
     \end{subfigure}
     \caption{The Floquet-Bloch spectra of a sequence of periodic approximants of the Schr\"odinger equation \eqref{eq: schrodinger_problem}.}
     \label{fig: qp_schrodinger}
\end{figure}
In Figure~\ref{fig: qp_schrodinger} we plot the band diagrams for a sequence of periodic approximants, where the periodic approximants are chosen according to the continued fraction expansion of $\theta$. For a more complex band diagram, we turn to the generalised eigenvalue problem
\begin{equation}\label{eq: generalised_problem}
    -\Delta f(x) = \lambda (V(x,\theta x) + 3) f(x).
\end{equation}
The resulting band diagrams are plotted in Figure~\ref{fig: qp_wave}. In this case, band gaps are visible, that are shared by all three of the supercell approximants considered. Assuming these continue to persist for subsequent supercell approximations, such that these are ``super band gaps'', Theorem~\ref{thrm: explicit_domain_qp} shows that there will be a corresponding band gap in the spectrum of the limiting quasiperiodic operator. In Figures~\ref{fig: qp_schrodinger} and~\ref{fig: qp_wave} it is apparent that, as the length of the periodic unit cell decreases, the slope of each spectral band decreases and the number of distinct bands in each fixed interval increases. This is precisely what must happen, as in the limit the Brillouin zone collapses to a set of measure zero.

\begin{figure}
     \centering
     \begin{subfigure}[b]{0.55\textwidth}
         \centering
         \includegraphics[height=2.7cm,clip,trim=8cm 0 7cm 0]{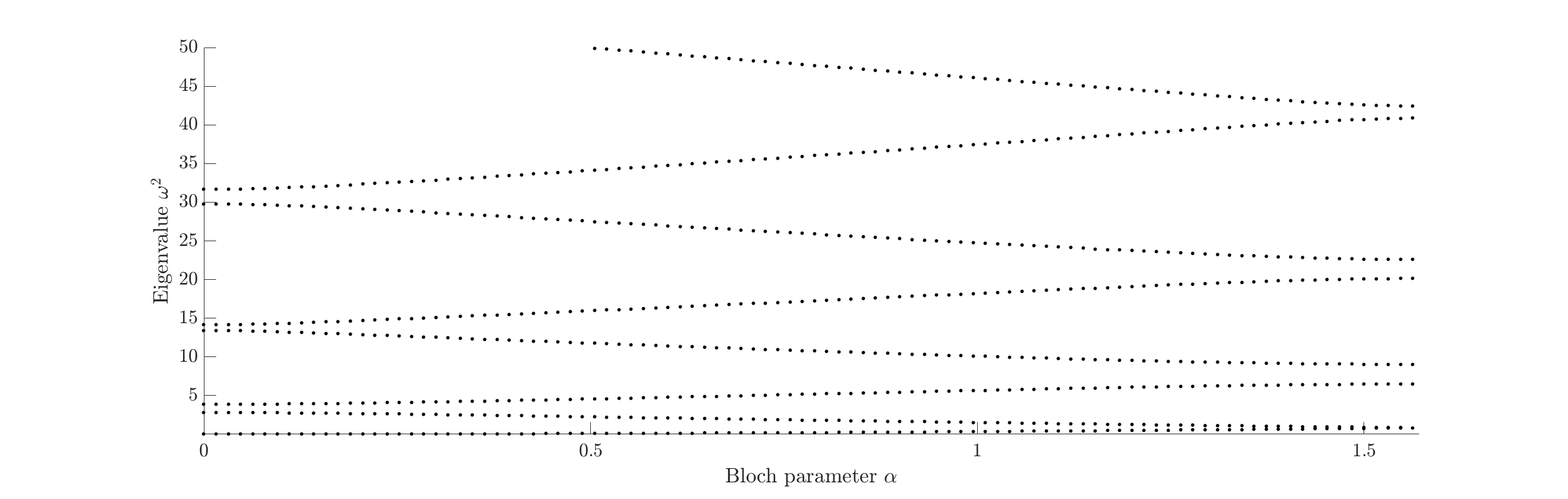}
         \caption{$\theta = \frac{3}{2}$}
         \label{fig: qp_wave_1}
     \end{subfigure}
     \begin{subfigure}[b]{0.245\textwidth}
         \centering
        \includegraphics[height=2.7cm]{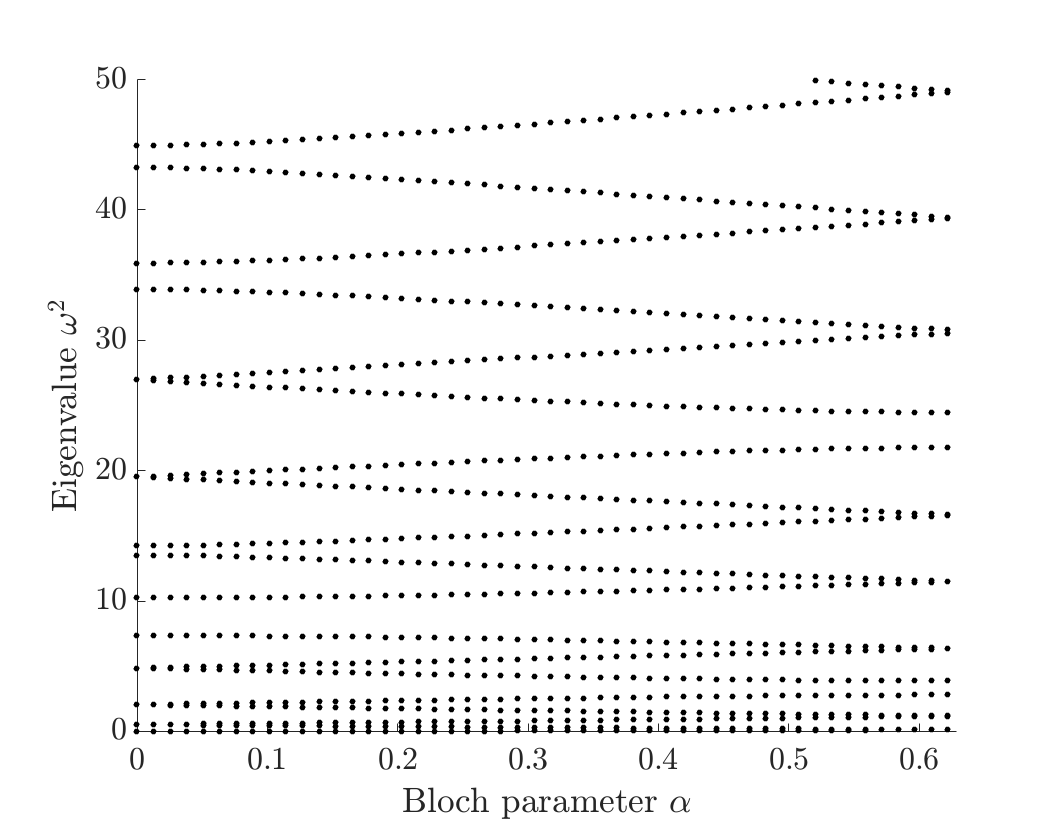}
         \caption{$\theta = \frac{8}{5}$}
         \label{fig: qp_wave_2}
     \end{subfigure}
     \begin{subfigure}[b]{0.15\textwidth}
         \centering
        \includegraphics[height=2.7cm]{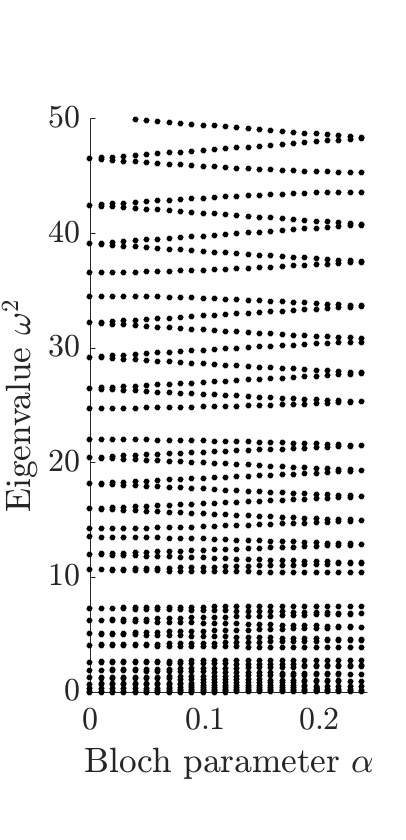}
         \caption{$\theta = \frac{21}{13}$}
         \label{fig: qp_wave_3}
     \end{subfigure}
     \caption{The Floquet-Bloch spectra of a sequence of periodic approximants of the generalised eigenvalue problem \eqref{eq: generalised_problem}.}
     \label{fig: qp_wave}
\end{figure}

\section{Superspace method}\label{sec: superspace}
A different approach to compute the spectrum is to exploit the fact that quasicrystals are periodic in a higher dimensional superspace: the "parent" space of the coefficients. Using this to perform numerical computations of the spectra was proposed by \cite{Rodriguez2008} and analogous ideas were also used to study forced problems by \cite{amenoagbadji2023wave}. 

\subsection{Theory}

In our example of the one-dimensional differential operator
\begin{equation}\label{eq: operator_qp_v2}
    A_\theta(x) = \sum_k a_k(x,\theta x) \frac{d^k}{dx^k},
\end{equation}
we lift into the two-dimensional space $\R^2$ and the new operator is
\begin{equation} \label{eq:liftedop}
    B_\theta = \sum_k a_k(x,y) D_\theta^k,
\end{equation}
where $D_\theta$ denotes the directional derivative along the vector $(1, \theta)^\top$ in $\R^2$. Since the coefficients of this operator \eqref{eq:liftedop} are periodic, Floquet-Bloch theory applies and one can reduce the spectral problem to studying the operators
\begin{equation}
    B_\theta(\alpha,\beta) = e^{-\i(\alpha x + \beta y)} \circ B_\theta \circ e^{\i(\alpha x + \beta y)}
\end{equation}
on $\mathbb{T}^2$.

It has been argued that, since a slice along an irrational angle fills the unit cell densely (see Figure \ref{fig: unit_cell_filled}), the eigenvalues of the above equation should not depend on $y$. Moreover, the two operators \eqref{eq: operator_qp_v2} and \eqref{eq:liftedop} should share the same spectrum. We will provide a more rigorous justification for these two statements. It has also been argued that to every $\lambda \in \sigma(B_\theta)$ there should exist Floquet eigenmodes
\begin{equation*}
    u_\lambda(x,y) = e^{\i(\alpha x + \beta y)}p(x,y),
\end{equation*}
where $p$ is periodic. However, to the best of the authors' knowledge, the existence of such functions usually requires ellipticity of the associated differential equations, which is not the case here. A priori, we can therefore not assume the existence of such solutions. Only for Schrodinger operators with analytic potential this has been rigorously proven, see \cite{Dinaburg1976}.

The next Theorem justifies the superspace method for spectral problems as it shows that the lifted operator has the same spectrum as the original quasiperiodic operator.
\begin{theorem}\label{thrm: superspace_equivalence}
    Let $\theta \in \R\setminus\Q$ and $A_\theta$ be a one-dimensional differential operator as in \eqref{eq: operator_qp_v2}. If $B_\theta$ is the lifted operator defined in \eqref{eq:liftedop}, then $\sigma(A_\theta) = \sigma(B_\theta)$.
\end{theorem}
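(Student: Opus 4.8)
The plan is to realise $B_\theta$ as a direct integral of one-dimensional operators, each of which lies in the hull $H(A_\theta)$, so that the desired spectral equality reduces to the translation-invariance of the spectrum along the hull (the statement $\sigma(A)=\sigma(\tilde A)$ for $\tilde A\in H(A)$ recorded after the frequency-module theorem).

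First I would straighten the characteristic direction. Introduce the unit-Jacobian change of variables $t=x$, $v=y-\theta x$ on $\R^2$, which induces a unitary $U$ on $L^2(\R^2)$. Since the directional derivative $D_\theta=\partial_x+\theta\partial_y$ is carried to $\partial_t$ by this substitution, and the coefficients satisfy $a_k(x,y)=a_k(t,v+\theta t)$, the conjugated operator becomes $UB_\theta U^{-1}=\sum_k a_k(t,v+\theta t)\,\partial_t^k$, in which no $v$-derivatives appear. Writing $L^2(\R^2)=\int^\oplus_{\R}L^2(\R_t)\,\mathrm{d}v$, this exhibits $B_\theta$ as a decomposable operator whose fibre over $v$ is $A_\theta^{(v)}:=\sum_k a_k(t,\theta t+v)\,\partial_t^k$. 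Each fibre is precisely the translated operator $A_{\theta,(0,v)}$ in the notation of Theorem~\ref{thrm: implicit_domain_qp}.

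Next I would identify every fibre as an element of the hull. Because $\theta$ is irrational, the orbit $\{(h,\theta h)\bmod\Z^2 : h\in\R\}$ is dense in $\mathbb{T}^2$, so for any $v$ one can choose $h_j\in\R$ with $(h_j,\theta h_j)\to(0,v)$ modulo $\Z^2$; uniform continuity of the periodic coefficients $a_k$ then yields $a_k(\,\cdot\,+h_j,\theta\,\cdot\,+\theta h_j)\to a_k(\,\cdot\,,\theta\,\cdot\,+v)$ uniformly, i.e.\ $A_\theta^{(v)}\in H(A_\theta)$. Hull invariance of the spectrum then gives $\sigma(A_\theta^{(v)})=\sigma(A_\theta)$ for every $v$, so all fibres share a single spectrum.

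I would close the argument with the spectral description of a direct integral: for a measurable field of self-adjoint operators, the spectrum of the direct integral is the essential union of the fibre spectra, which collapses to the common value when all fibres share one spectrum. Applying this with common spectrum $\sigma(A_\theta)$ yields $\sigma(B_\theta)=\sigma(A_\theta)$. The main obstacle is the functional-analytic bookkeeping behind this final step: checking that the field $v\mapsto A_\theta^{(v)}$ is measurable, that $B_\theta$ is (essentially) self-adjoint on $L^2(\R^2)$ in spite of its degeneracy in the transverse direction, and that the common-spectrum direct-integral formula is valid for these unbounded fibres. Once that is secured, the density and uniform-continuity verification that $A_\theta^{(v)}\in H(A_\theta)$ is the only remaining point requiring care, and it is exactly here that the hypothesis $\theta\in\R\setminus\Q$ is used.
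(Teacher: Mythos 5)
Your argument is sound and reaches the conclusion by a genuinely different route from the paper. The paper works on the $A_\theta$ side: it invokes Theorem~\ref{thrm: spectrum_equivalence} to identify $\sigma(A_\theta)$ with the Besicovitch spectrum, takes an approximate eigenvector $f\in B^2(\R)$ via Lemma~\ref{lem: approximate_eigenvectors}, uses the fact that $A_\theta$ preserves the frequency modules $k+2\pi(m+\theta n)$ to reduce to a single such module, and then lifts the resulting trigonometric approximate eigenvector to a Bloch-type function $\tilde F(x,y)=e^{\i kx}\sum_{m,n}f_{mn}e^{2\pi\i(mx+\theta ny)}$ on which $B_\theta$ acts through the same symbol; the reverse inclusion is obtained symmetrically from the Fourier series of approximate eigenvectors of $B_\theta$. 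You instead work on the $B_\theta$ side: the shear $(x,y)\mapsto(x,y-\theta x)$ exhibits $B_\theta$ as a direct integral over $v$ of the translated operators $A_{\theta,(0,v)}$, each of which lies in the hull $H(A_\theta)$ by density of the irrational orbit in $\mathbb{T}^2$, so that hull-invariance of the spectrum (the remark following the frequency-module alternative in Section~\ref{sec:prelims}) plus the essential-union formula for direct integrals gives the result. Your route avoids Besicovitch space entirely and makes structurally transparent why the lift loses no spectral information; the price is exactly the overhead you flag, namely measurability of the fibre field, self-adjointness of the degenerate (non-elliptic) operator $B_\theta$ on $L^2(\R^2)$, and reconciling the direct-integral spectrum with the Floquet--Bloch definition of $\sigma(B_\theta)$ implicit in the paper --- points at which your sketch is no less rigorous than the paper's own proof, which is itself informal about the space in which $\tilde F$ is an approximate eigenvector. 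Both arguments consume the hypothesis $\theta\in\R\setminus\Q$ in the same essential place: for you it is the density of $\{(h,\theta h)\bmod\Z^2\}$ in $\mathbb{T}^2$; for the paper it is the density and distinctness of the frequencies $m+\theta n$, which is also what underlies the paper's closing remark that the spectrum of $B_\theta(\alpha)$ is independent of the quasi-momentum.
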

\begin{proof}    
    From Theorem \ref{thrm: spectrum_equivalence}, we have $\sigma_B(A_\theta) = \sigma(A_\theta)$. Let $\lambda \in \sigma(A_\theta)$ and $ \epsilon>0$. By Lemma~\ref{lem: approximate_eigenvectors} and Definition~\ref{def: besicovitch_space}, there exists an almost eigenvector $f \in B^2$ with generalised Fourier series
    \begin{equation}
        f(x) = \sum_{\gamma \in \R} f_\gamma e^{\i\gamma x},
    \end{equation}
    where only countably many $f_\gamma$ are nonzero. By considering how $A_\theta$ acts on the basis vectors $\left\{\exp(\i\gamma x)\right\}_{\gamma \in \R}$, we observe that $A_\theta$ preserves subspaces
    \begin{equation*}
        E_k = \left\{g(x) \in B^2(\R) \mid g(x) = e^{2\pi k x}\sum_{m,n}g_{mn}e^{2\pi \i(mx + \theta n x)} \text{ where }g_{mn} \in l^2(\Z^2)\right\}.
    \end{equation*}
     Hence there must exist some $k \in \R$ such that there exists an $\epsilon$-almost eigenvector
    \begin{equation}
        \Tilde{f}(x) = e^{\i kx} \sum_{m,n} f_{mn}e^{2\pi \i(m+\theta n)x}.
    \end{equation}
    We define 
    \begin{equation}
        \Tilde{F}(x,y) = e^{\i kx} \sum_{m,n} f_{mn}e^{2\pi \i(mx+\theta ny)}.
    \end{equation}
    Since $B_\theta$ acts on the Fourier coefficients of $\Tilde{F}$ like $A_\theta$ acts on the (generalised) Fourier coefficients of $\Tilde{f}$, $\Tilde{F}$ is an $\epsilon$ eigenvector for $B_\theta$. Hence $\sigma(A_\theta) \subset \sigma(B_\theta) $. The converse inclusion follows from a similar argument utilising the Fourier series of (approximate) eigenvectors of $B_\theta$.
\end{proof}
In the proof of Theorem \ref{thrm: superspace_equivalence}, it also becomes apparent why the spectrum of $B_\theta(\alpha)$ must be independent of the quasi-momentum $\alpha$. Since $\theta$ is irrational, any $\alpha$ can be approximated to arbitrary precision just by the Fourier modes of the approximate eigenvector.

\subsection{Numerical Examples}

Theorem~\ref{thrm: superspace_equivalence} shows that the lifted operator has the same spectrum as the original quasiperiodic operator. To use this fact to compute spectra, all that remains is to choose a method to discretise the lifted operator. One way to achieve this is using the method of finite differences, as we did in Section~\ref{sec:superspace_numerics} for the supercell approach. However, some care is needed, as we need to take directional derivatives along a vector with incommensurate entries. If we were to naively use a square mesh, we would need to approximate these vectors, for example by the means of finite continued fractions. This would lead to complicated finite difference matrices. Instead, we opt to employ a rectangular mesh, where the ratio between the $x$ and $y$ step size is given by $\theta^{-1}$. If we index our grid points as $x_i = ih$ and $y_j = j\theta h$, then the second-order finite difference directional derivative of a function $f$ at point $(x_i,y_j)$ is given by
\begin{equation*}
    D_\theta^2 f(x_i,y_j) \approx \frac{f(x_{i+1},y_{j+1}) - 2f(x_i,y_j) + f(x_{i-1},y_{j-1})}{h^2}.
\end{equation*}

\begin{figure}
    \centering
    \begin{subfigure}[b]{0.49\textwidth}
        \includegraphics[width=\linewidth]{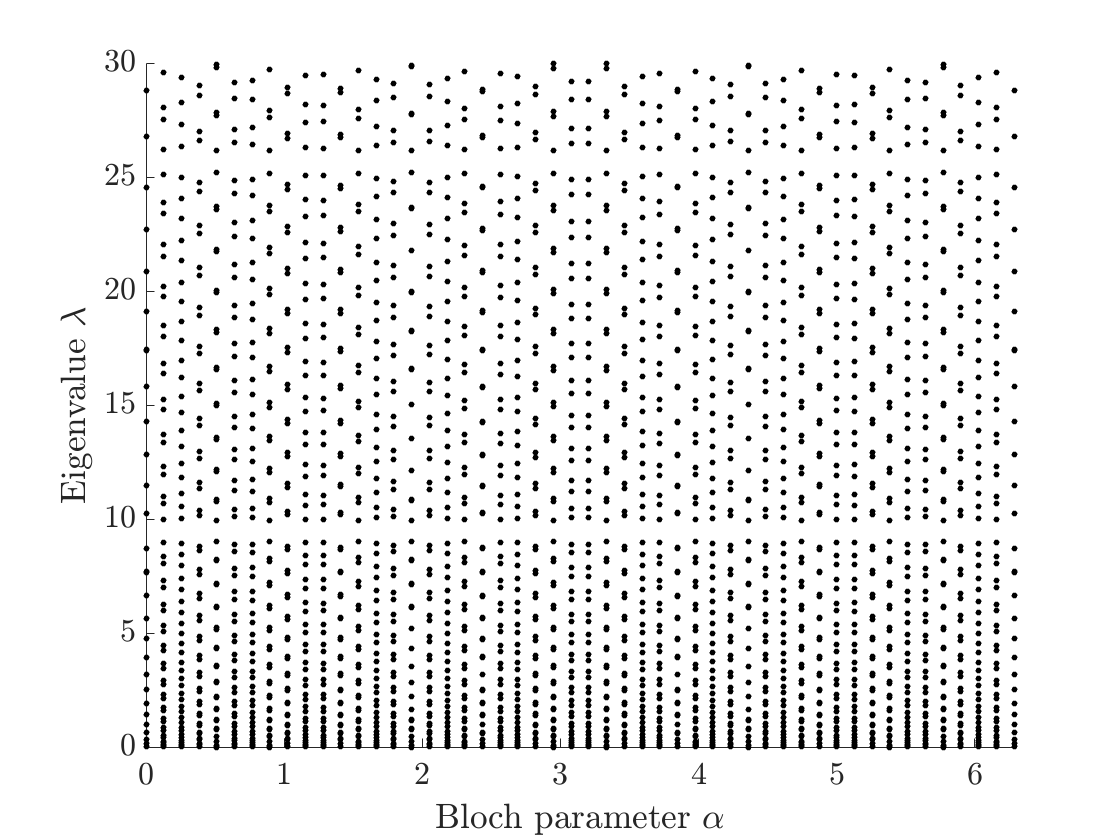}
        \caption{Schrodinger eigenvalue problem \eqref{eq: schrodinger_problem}.}
        \label{fig: superspace_band_diagram}
    \end{subfigure}
    \begin{subfigure}[b]{0.49\textwidth}
        \includegraphics[width=\linewidth]{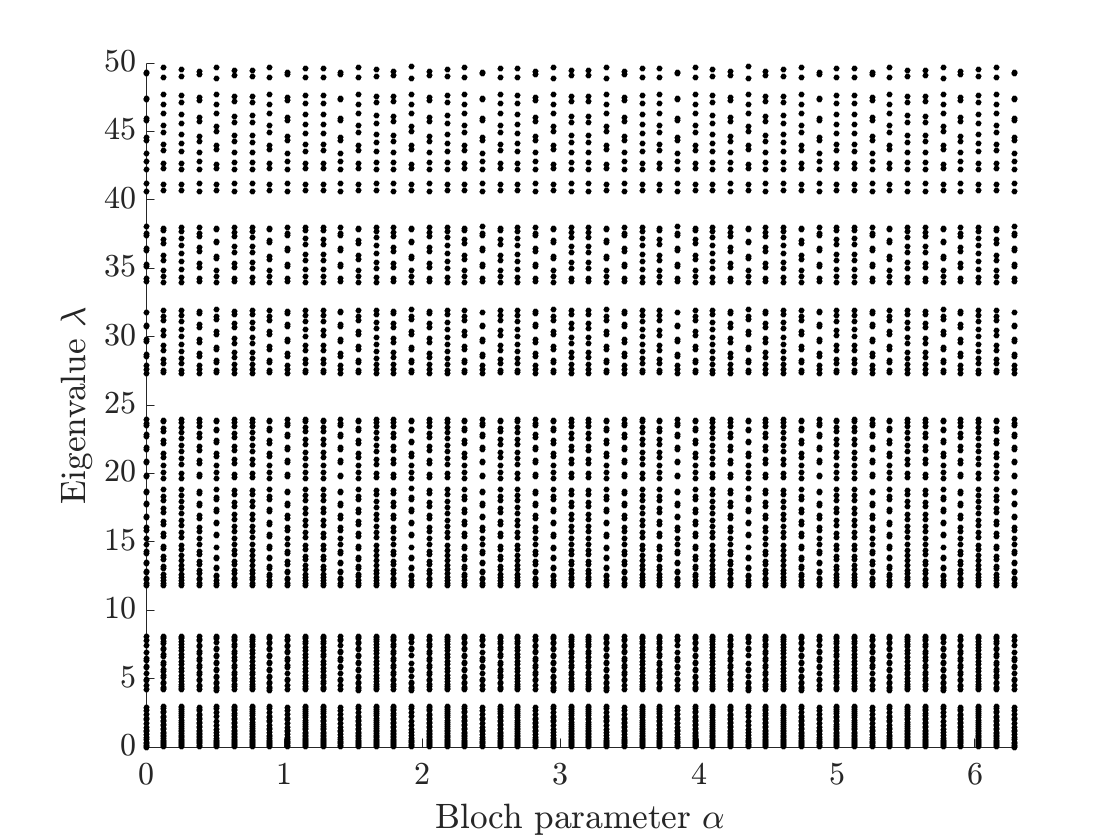}
        \caption{Generalised eigenvalue problem \eqref{eq: generalised_problem}.}
        \label{fig: superspace_band_diagram_wave}
    \end{subfigure}
    \caption{Band diagrams computed using the superspace approach with the finite difference method using a rectangular mesh of characteristic size $h = 0.02$.}
\end{figure}

In Figure \ref{fig: superspace_band_diagram} we plot the band diagram of the Schrodinger eigenvalue problem \eqref{eq: schrodinger_problem} using a mesh size of $h = 0.02$. The bands are shown along the $\alpha$ axis only, for convenience of visualisation. As the potential $V$ is analytic, we expect quasiperiodic eigenvectors, two of which are plotted in Figure \ref{fig: superspace_fd_eigenmodes}. The band diagram of the generalised eigenvalue problem \eqref{eq: generalised_problem} can be seen in Figure \ref{fig: superspace_band_diagram_wave}. In both cases, we observe good agreement with the spectral gaps computed using the supercell approach, shown in Figures~\ref{fig: qp_schrodinger} and~\ref{fig: qp_wave}.

\begin{figure}
     \centering
     \begin{subfigure}[b]{0.49\textwidth}
         \centering
         \includegraphics[width=\textwidth]{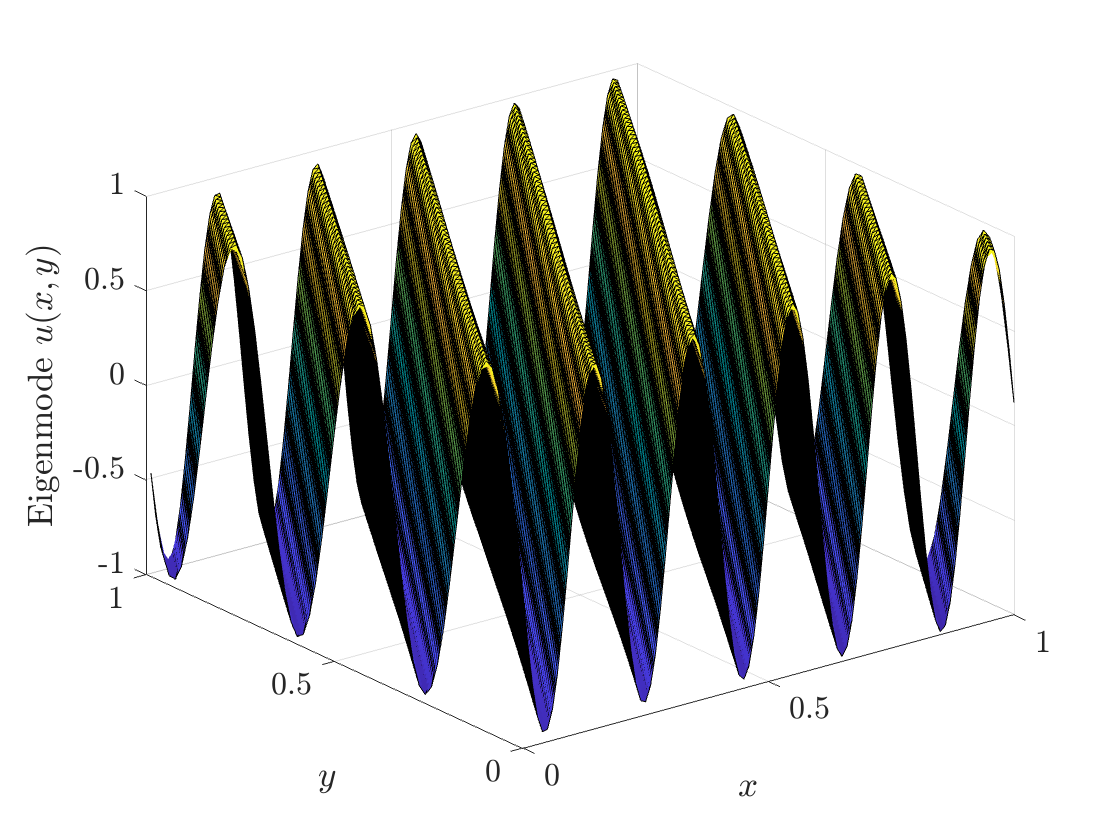}
         \caption{Eigenmode associated to $\lambda = 0.4770$.}
         \label{fig: superspace_fd_eigenmode_1}
     \end{subfigure}
     \hfill
     \begin{subfigure}[b]{0.49\textwidth}
         \centering
        \includegraphics[width=\textwidth]{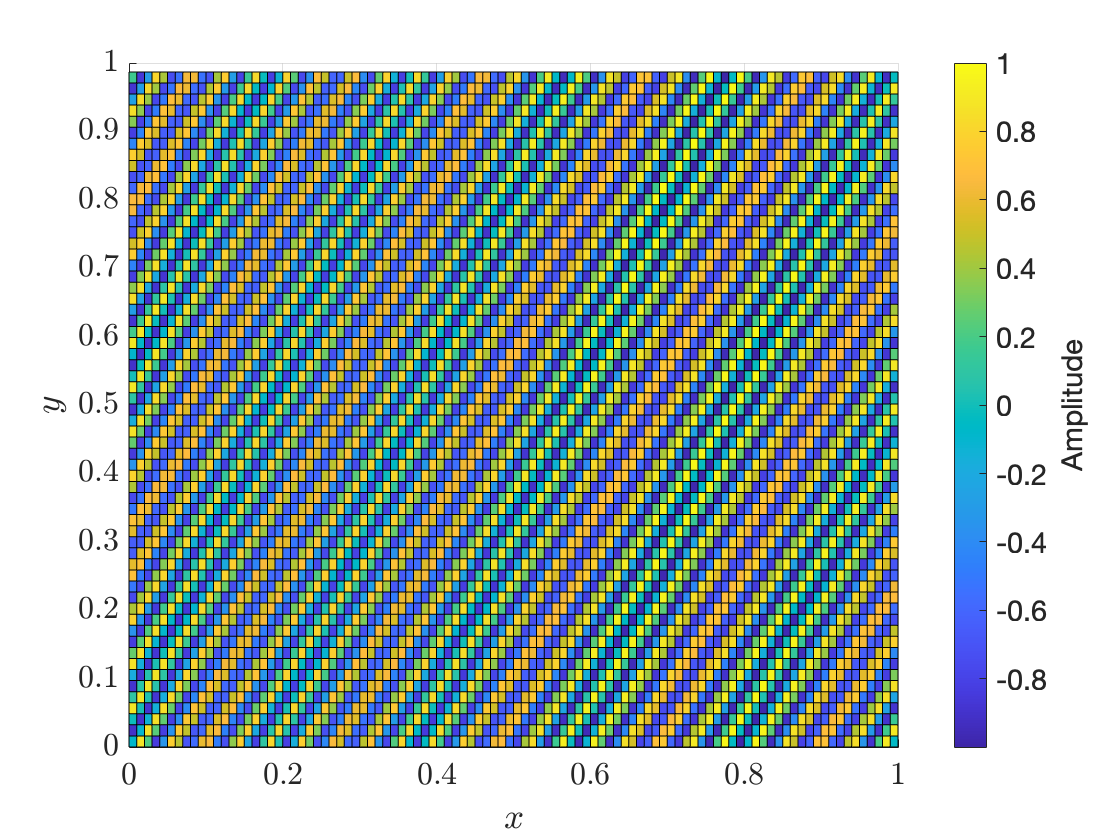}
         \caption{Eigenmode associated to $\lambda = 7.2844$.}
         \label{fig: superspace_fd_eigenmode_2}
     \end{subfigure}
        \caption{Two eigenmodes of the Schrodinger eigenvalue problem \eqref{eq: schrodinger_problem} computed using the superspace approach with a finite difference method and a rectangular mesh of size $h = 0.02$.}
        \label{fig: superspace_fd_eigenmodes}
\end{figure}


\begin{figure}
     \centering
     \begin{subfigure}[b]{0.49\textwidth}
         \centering
         \includegraphics[width=\textwidth]{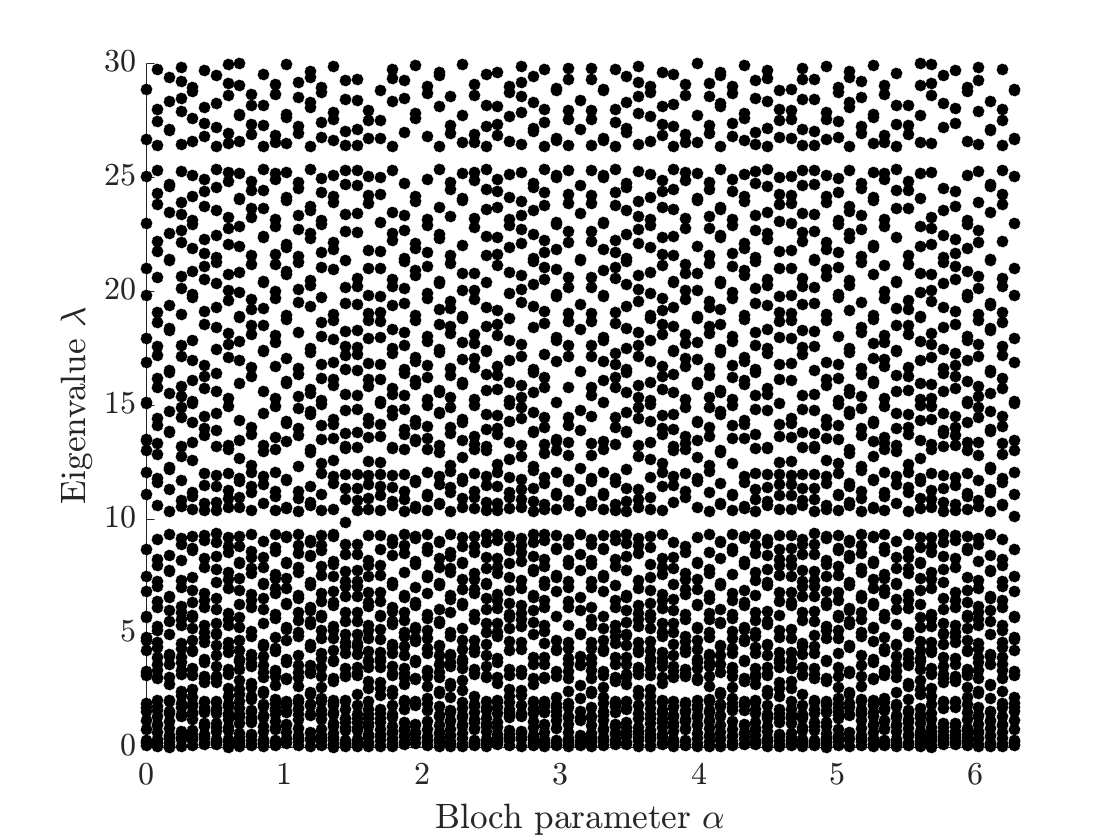}
         \caption{Dispersion relation of \eqref{eq: schrodinger_problem} calculated using the superspace method with $N=50$ plane waves.}
         \label{fig: superspace_schrodinger}
     \end{subfigure}
     \hfill
     \begin{subfigure}[b]{0.49\textwidth}
         \centering
        \includegraphics[width=\textwidth]{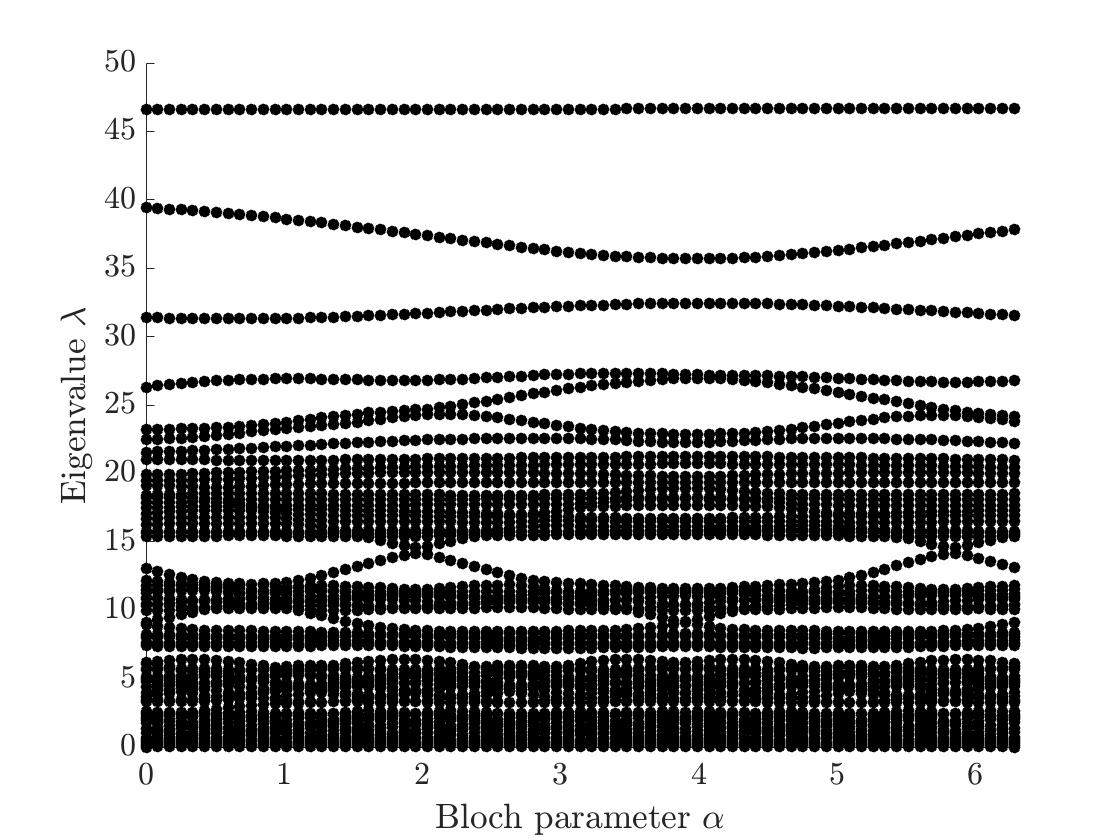}
         \caption{Dispersion relation of \eqref{eq: generalised_problem} calculated using the superspace method with $N=40$ plane waves.}
         \label{fig: superspace_generalised}
     \end{subfigure}
        \caption{Spectra computed using the superspace method with the plane wave expansion method used to discretise in lifted space.}
        \label{fig: superspace_plane_wave}
\end{figure}

Another way to implement the superspace approach is by implementing a plane wave expansion method to discretise the lifted operator, as suggested by \cite{Rodriguez2008}. In Figure~\ref{fig: superspace_plane_wave} we show the spectra of the spectral problem \eqref{eq: eigenvalue_problem} and the generalised spectral problem \eqref{eq: generalised_problem} computed using this approach. In this case, many of the main spectral gaps are obscured by the appearance of spurious eigenvalues. Similar spectral pollution was observed in \cite{Rodriguez2008}. While \cite{Rodriguez2008} proposed some formal arguments for identifying (and, hence, removing) spurious modes, this represents a significant weakness of the plane wave expansion method.

The failure of the plane wave expansion method to reliably predict the spectrum can most likely be attributed to the lack of ellipticity in the superspace, which means we cannot a priori expect any sort of decay of the Fourier coefficients. This can also be understood as the lack of Bragg resonances in this aperiodic material.  For periodic systems, given any fixed frequency $\omega$, there are only a finite number of plane waves with frequency $\omega' \leq \omega$ satisfying given Bloch boundary conditions. Since a Bloch eigenfunction $u(x)$ with eigenvalue $\omega^2$ must predominantly consist of plane waves with frequency less than $\omega$, truncating a plane wave expansion yields sensible results. On the other hand, in quasi-periodic systems, the lack of a countable series of spectral bands means that any eigenmode can be composed of an infinite number of plane waves.

\section{Localised interface modes} \label{sec:interface}
To highlight the importance of convergence statements like Theorem \ref{thrm: explicit_domain_qp}, we will show how it can be applied to the study of localised interface modes. These modes occur when an interface is introduced to a quasicrystal, for example by introducing an artificial axis of reflectional symmetry \cite{davies2022symmetry, MartSabat2021, Apigo2019, Liu2021}, by joining two different materials together \cite{verbin2013observation} or, simply, at the edges of finite-sized materials \cite{xia2020topological, kraus2012topological}. They are localised at this interface, in the sense that their amplitude decays exponentially as a function of distance from the interface. As a result, they allow for very strong focusing of wave energy and are the starting point of many different wave guiding and manipulating devices. Although localised interface modes have been studied extensively in periodic media with defects, analogous modes in quasicrystalline settings have also been studied previously, see for instance \cite{davies2022symmetry} for Fibonacci tiling based structures or \cite{Apigo2019,MartSabat2021,verbin2013observation,Liu2021,MartSabat2022} for more general quasicrystals. The complex spectra of quasiperiodic structures allow for many more bandgaps and hence more opportunity for defect-induced localisation.

In this section, we consider quasicrystalline materials in which interfaces have been formed by introducing an artificial axis of reflectional symmetry. That is, we consider operators of the form
\begin{equation}\label{eq: operator_qp_reflected}
    A_\theta(x)u := \sum_{k = 0}^2 a_k(\lvert x \rvert,\theta \lvert x \rvert) \frac{d^ku}{dx^k} = \omega^2 \rho u
\end{equation}
where $a_k,\rho \in \mathcal{C}^\infty_b(\mathbb{T}^2)$. The main result of this section is the following:
\begin{theorem}
     Let $\theta \in \R$ and $A_\theta$ a self-adjoint elliptic operator of the form \eqref{eq: operator_qp_reflected}. Let $\theta_l = \frac{p_l}{q_l}$ be the $l$\textsuperscript{th} continued fraction approximant of $\theta$. If there exists some $l_0$ such that $\lambda$ is an interface eigenvalue for $A_{\theta_{l_0}}$ that is sufficiently well-separated from the rest of the spectrum, i.e.
     \begin{equation}\label{eq: isolation_condition}
         d(\lambda, \sigma(A_{\theta_{l_0}})> 2C(1 + \lvert \lambda \rvert)\sum_{k = n}^\infty q_k^{-1}\prod_{l = n}^{k-1}(1+Cq_{l}^{-1}), \quad \text{for all } \lambda \in \sigma(A_\theta),
     \end{equation}
     then $A_\theta$ must also have a localised interface eigenmode.
\end{theorem}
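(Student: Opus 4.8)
The plan is to transfer the interface eigenvalue from the periodic approximant $A_{\theta_{l_0}}$ to the limiting operator $A_\theta$ by using the exponentially localized interface eigenfunction as an approximate eigenfunction, and then to promote the resulting spectral point to a genuine discrete eigenvalue using the separation condition \eqref{eq: isolation_condition} together with Theorem~\ref{thrm: explicit_domain_qp}. The reflectional symmetry in \eqref{eq: operator_qp_reflected} plays a double role: it is what produces the localized interface mode in the first place, and it forces the mode to concentrate at $x=0$, precisely where the coefficients of $A_\theta$ and $A_{\theta_{l_0}}$ agree best.

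First I would record the structure of the spectrum. Because the coefficients agree with those of the (non-reflected) bulk operator as $\lvert x\rvert\to\infty$, the essential spectrum of each interface operator coincides with the spectrum of the corresponding bulk operator; for $A_{\theta_{l_0}}$ this is the Floquet–Bloch bulk spectrum, and for $A_\theta$ it is the quasiperiodic bulk spectrum, to which Theorem~\ref{thrm: explicit_domain_qp} applies directly. Since $\lambda$ is an interface eigenvalue of $A_{\theta_{l_0}}$ lying in a gap of this essential spectrum, there is a normalized eigenfunction $v$ with $A_{\theta_{l_0}}v=\lambda v$, and a Combes--Thomas argument yields constants $c,C>0$ such that $\lvert v^{(j)}(x)\rvert\leq C e^{-c\lvert x\rvert}$ for $0\leq j\leq k$, where $c$ depends on the distance of $\lambda$ from the band edges and derivative bounds follow from elliptic regularity.

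Next I would show $v$ is an approximate eigenfunction for $A_\theta$. Writing $\lVert(A_\theta-\lambda)v\rVert=\lVert(A_\theta-A_{\theta_{l_0}})v\rVert$ and invoking the Lipschitz bound $\lvert a_k(\lvert x\rvert,\theta\lvert x\rvert)-a_k(\lvert x\rvert,\theta_{l_0}\lvert x\rvert)\rvert\leq\lVert\nabla a_k\rVert_{\mathcal{C}^0}\lvert\theta-\theta_{l_0}\rvert\lvert x\rvert$ from \eqref{eq: supercell_convergence_qp_9}, the coefficient error grows only linearly in $\lvert x\rvert$ and is damped by the exponential decay of $v$ and its derivatives, so that $\int\lvert x\rvert^2\sum_k\lvert v^{(k)}(x)\rvert^2\,\mathrm dx<\infty$ and $\lVert(A_\theta-\lambda)v\rVert\leq C(1+\lvert\lambda\rvert)\lvert\theta-\theta_{l_0}\rvert$. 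Bounding $\lvert\theta-\theta_{l_0}\rvert$ by the tail $\sum_{k}q_k^{-1}\prod_{l}(1+Cq_l^{-1})$, this error is at most the convergence rate appearing on the right of \eqref{eq: isolation_condition}. By Lemma~\ref{lem: approximate_eigenvectors} there is then a point $\mu\in\sigma(A_\theta)$ with $\lvert\mu-\lambda\rvert$ no larger than this rate. Theorem~\ref{thrm: explicit_domain_qp} places the essential spectrum of $A_\theta$ within the same rate of that of $A_{\theta_{l_0}}$, so the factor $2$ in \eqref{eq: isolation_condition} leaves a strictly positive margin: $\mu$ cannot lie in the essential spectrum of $A_\theta$, hence it is an isolated point of $\sigma(A_\theta)$, i.e. a discrete eigenvalue, whose eigenfunction decays exponentially by a final Combes--Thomas estimate for $A_\theta$.

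The main obstacle I expect is making the exponential-localization step rigorous in this symmetry-defect setting: establishing the Combes--Thomas decay of the interface eigenfunction with derivative control up to order $k$, and then verifying that the resulting approximate-eigenfunction error is genuinely dominated by the convergence bound of Theorem~\ref{thrm: explicit_domain_qp}, so that the factor $2$ in \eqref{eq: isolation_condition} is exactly what separates an approximate spectral point from the essential spectrum. The conceptual crux is not locating a point of $\sigma(A_\theta)$ near $\lambda$, which is routine, but certifying that this point is isolated from the essential spectrum and therefore corresponds to a genuinely localized interface mode rather than to bulk continuous spectrum.
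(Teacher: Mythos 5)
Your proposal is correct and follows the same skeleton as the paper's proof (transfer the eigenvalue via the convergence machinery of Theorem~\ref{thrm: explicit_domain_qp}, then use the factor $2$ in \eqref{eq: isolation_condition} to keep it away from the bands), but the two sub-steps are handled differently, and in both cases your version is more explicit. For existence, the paper simply asserts that ``the arguments of Theorem~\ref{thrm: explicit_domain_qp} apply to this setting as well''; this is not literally true as stated, since Lemma~\ref{lem: explicit_domain_qp} builds its approximate eigenfunctions from extended Floquet--Bloch modes cut off by $h_l$, whereas an interface eigenvalue of $A_{\theta_{l_0}}$ has no Bloch representation. Your substitute --- use the genuine interface eigenfunction, which a Combes--Thomas bound makes exponentially localised at $x=0$ exactly where the coefficients of $A_\theta$ and $A_{\theta_{l_0}}$ agree best, and feed it to Lemma~\ref{lem: approximate_eigenvectors} --- is the natural way to make that assertion rigorous (note only that your constant now depends on the decay rate of the mode, hence on $l_0$ and the gap width, so it is not a priori the same $C$ as in \eqref{eq: isolation_condition}). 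For isolation, the paper invokes a Liouville transform and external results guaranteeing finitely many interface eigenvalues per band gap of a periodic structure, in order to rule out $\lambda$ being an accumulation point of other interface eigenvalues; you instead identify $\sigma_{\mathrm{ess}}(A_\theta)$ with the bulk quasiperiodic spectrum via a half-line decomposition and observe that, for a self-adjoint operator, any non-isolated spectral point lies in $\sigma_{\mathrm{ess}}$, so the margin left by the factor $2$ already forces $\mu$ into the discrete spectrum. This is self-contained and arguably cleaner, at the price of having to justify the Weyl-sequence identification of the essential spectrum of the reflected operator with that of the bulk operator, which you state but do not prove.
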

\begin{proof}
    Existence of the limiting eigenvalue follows from Theorem \ref{thrm: explicit_domain_qp}, since the arguments apply to this settings as well. To see that this eigenvalue is isolated (and hence localised), we note that \eqref{eq: isolation_condition} ensures that $\lambda$ cannot merge with the spectral bands as $l\rightarrow \infty$. By utilizing the Liuoville-transform to bring equation \eqref{eq: operator_qp_reflected} in divergence form, we can see that the results of \cite{thiang2023bulk, Coutant2024, alexopoulos2023topologically} apply and hence it follows that any bandgap in a periodic structure can only support finitely many interface eigenvalues. Hence $\lambda$ must always remain isolated.
\end{proof}

\begin{figure}
     \centering
     \begin{subfigure}[b]{0.45\textwidth}
         \centering
         \includegraphics[width=\textwidth]{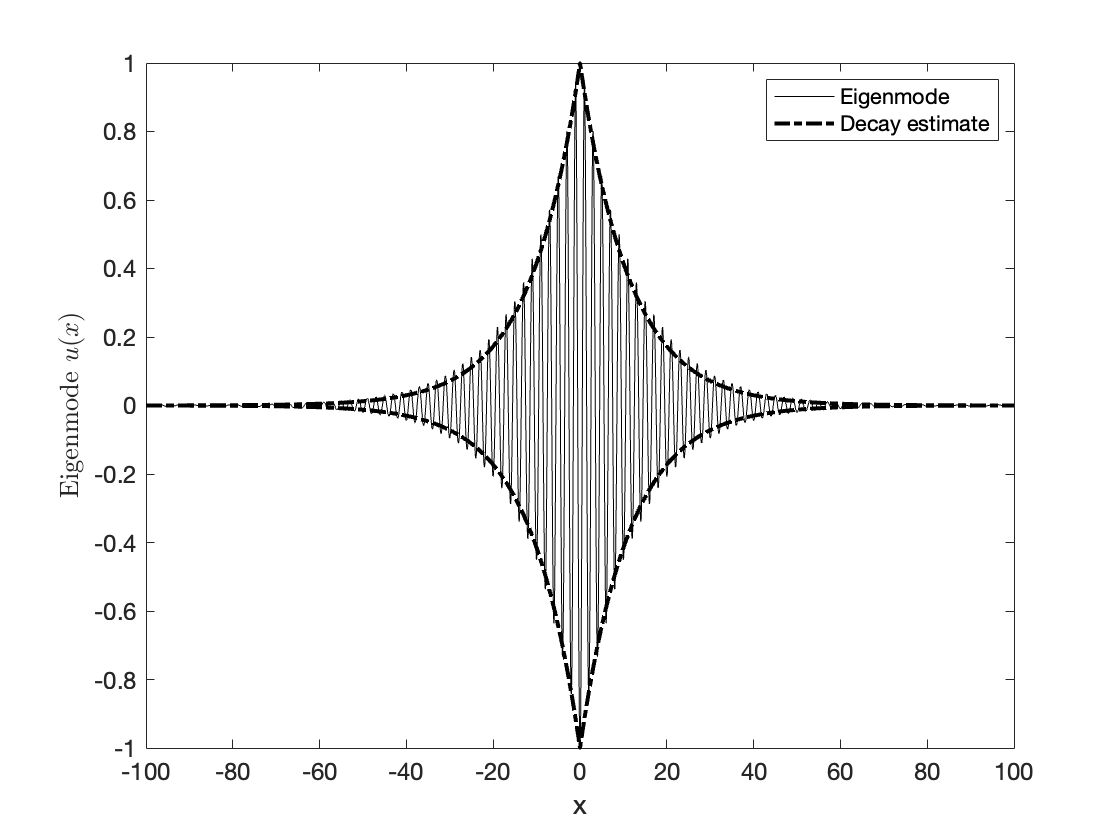}
         \caption{$\omega_1^2 = 9.9358$}
         \label{fig: interface_qp_1}
     \end{subfigure}
     \hspace{0.2cm}
     \begin{subfigure}[b]{0.45\textwidth}
         \centering
         \includegraphics[width=\textwidth]{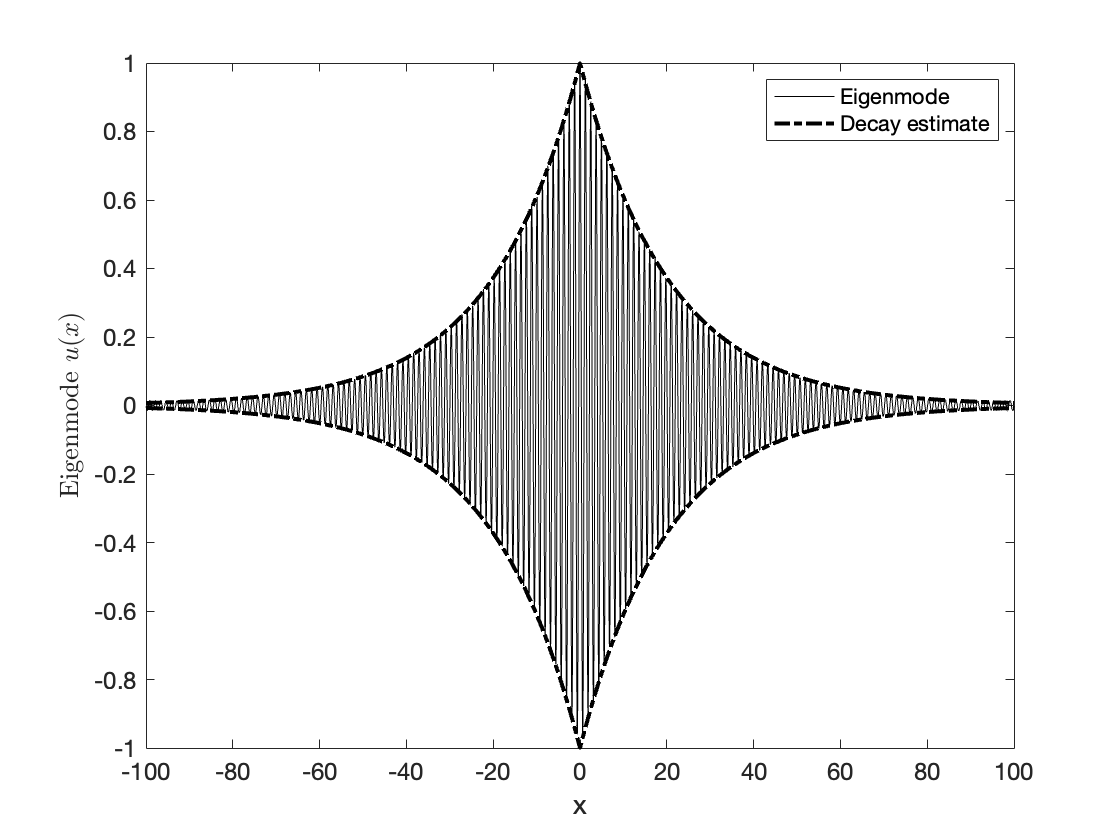}
         \caption{$\omega_2^2 = 25.8574$}
         \label{fig: interface_qp_2}
     \end{subfigure}
        \caption{The localised interface modes for \eqref{eq: reflected_schrodignger} decay exponentially away from the reflection-based interface. Alongside the modes the decay rate derived from the periodic approximation is shown as a dotted line.}
        \label{fig: interface_qp}
\end{figure}

As an example, consider a quasiperiodic Schrodinger operator with a reflection-induced interface
\begin{equation}\label{eq: reflected_schrodignger}
    A_\theta = -\frac{d^2}{dx^2} + \sin(2\pi\lvert x \rvert) + \sin(2\pi\theta\lvert x\rvert).
\end{equation}
For $k>l$ and $\theta = \frac{1 + \sqrt{5}}{2}$, the unit cell with index $l$ (up to errors of size $\mathcal{O}(q_{l+1}^{-1})$) is contained in the unit cell with index $k$ approximately $\theta^{k-l}$ times. Assuming that the mode decays uniformly across the unit cell, this suggests to consider an approximate decay rate given by
\begin{equation}\label{eq: improved_decay_estimate}
\exp\left(\min_{k}\left(\frac{\log(|\lambda_{k,min}|)}{q_k}\right)\lvert x\rvert \right),
\end{equation}
where $\lambda_{k,min}$ is the smaller of the two eigenvalues associated to the transfer matrix of the $k$\textsuperscript{th} continued fraction approximant. Note that, since we are in a band gap of these periodic approximants, we expect $|\lambda_{k,min}|<1$ so $\log(|\lambda_{k,min}|)<0$. A similar decay estimate for interface structures composed of Fibonacci tilings was derived in \cite{davies2022symmetry}. In Figure \ref{fig: interface_qp}, it can be seen that this expression indeed provides a very accurate approximation of the decay.

Similarly, we can also consider interface modes for the generalised eigenvalue problem with reflected quasiperiodic coefficients
\begin{equation}\label{eq: reflected_generalised}
    -\frac{d^2}{dx^2} u = \lambda (3+ \sin(2\pi\lvert x \rvert) + \sin(2\pi\theta\lvert x\rvert))u.
\end{equation}
Two resulting localised modes are plotted together with decay estimates in Figure~\ref{fig: interface_qp_generalised}.

\begin{figure}
     \centering
     \begin{subfigure}[b]{0.45\textwidth}
         \centering
         \includegraphics[width=\textwidth]{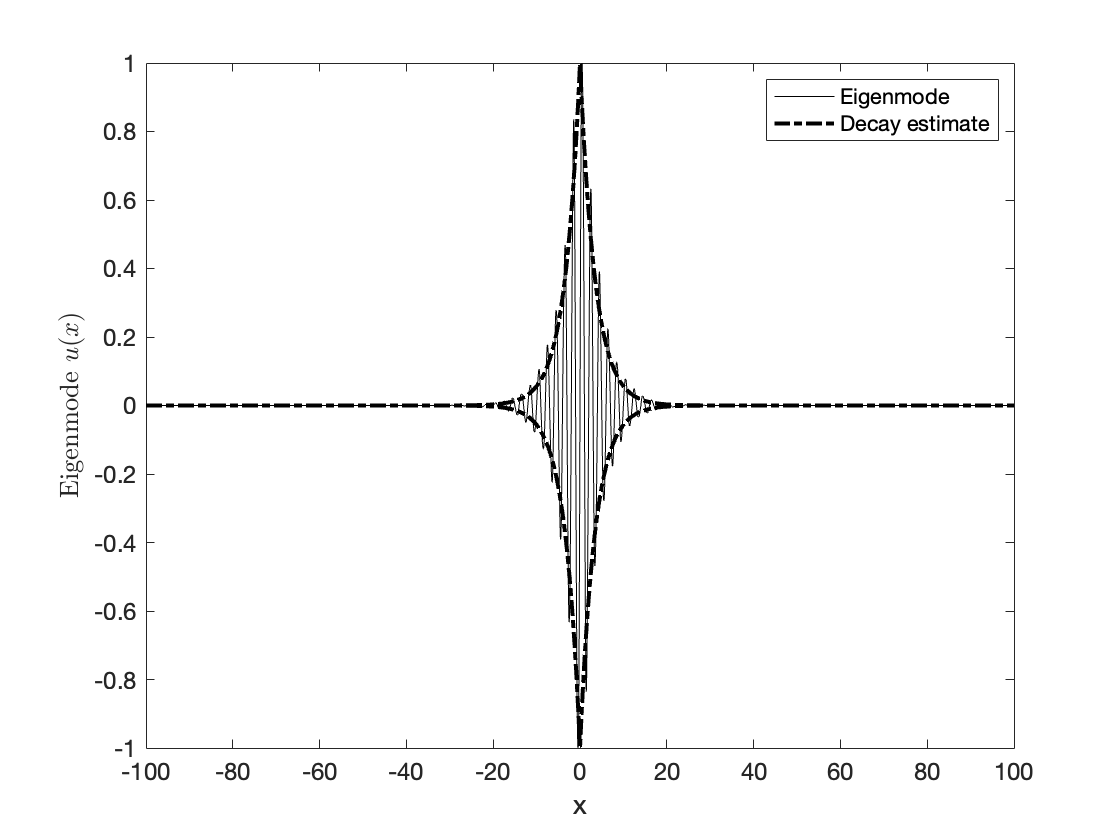}
         \caption{$\omega_1^2 = 3.2403$}
       
     \end{subfigure}
     \hspace{0.2cm}
     \begin{subfigure}[b]{0.45\textwidth}
         \centering
         \includegraphics[width=\textwidth]{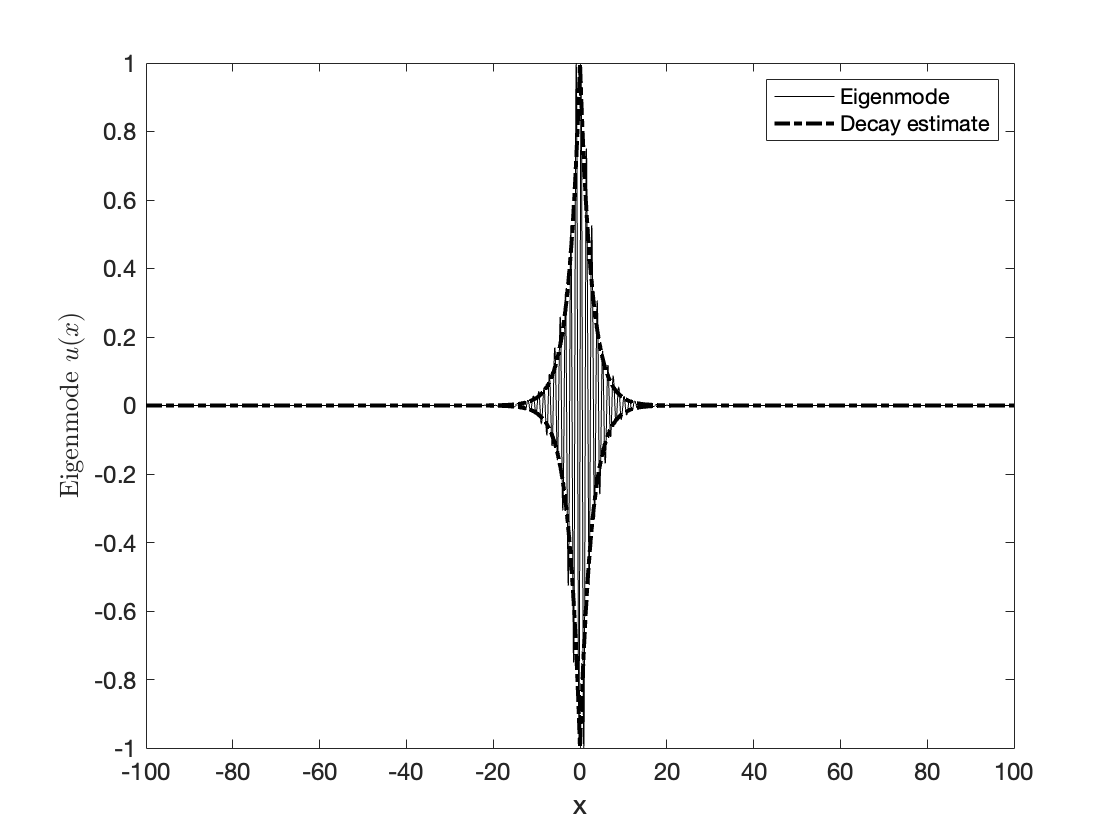}
         \caption{$\omega_2^2 = 8.1634$}
  
     \end{subfigure}
        \caption{The localised interface modes for the generalised eigenvalue problem \eqref{eq: reflected_generalised}. The decay rate derived from the periodic approximation is shown as a dotted line.}
        \label{fig: interface_qp_generalised}
\end{figure}

\section{Conclusion}

In this work, we have presented convergence theories for the supercell and superspace methods for approximating the spectra of quasiperiodic operators. We have shown that the series of Floquet-Bloch spectra predicted by supercell approximations converges to the limiting spectrum in a Hausdorff sense when the unit cell is arbitrarily large. This result builds on \cite{Damanik2016} by giving explicit constructions of the supercells needed to yield predicted convergence rates. This shows that when super band gaps appear in the supercell approximations \cite{Morini2018, davies2023super}, these must correspond to spectral gaps in the spectrum of the limiting operator. For the superspace method, we proved that the lifted operator has the same spectrum and showed how the spectral pollution observed in \cite{Rodriguez2008} can be mitigated by choosing the numerical discretisation method appropriately. Finally, we extended this theory to describe localised eigenmodes in systems that have been perturbed to have interfaces. We proved that such modes exist and used periodic approximations to gives estimates for their eigenfrequencies and decay rates. 

Our work helps to pave the way for the realisation of new quasiperiodic metamaterial devices. The theory discussed here provides rigorous foundations for using convenient Floquet-Bloch methodologies to approximate spectra. This will allow for more systematic investigation of quasiperiodic metamaterial design problems, using approaches that build on techniques that have been well established for periodic materials in metamaterial science. Our results are particularly useful for predicting the main spectral gaps and the eigenfrequencies of any localised modes, both of which are fundamental to wave guiding and manipulation applications.

\vskip6pt

\enlargethispage{20pt}

\section*{Acknowledgements}{The authors would like to thank Sebastien Guenneau, Richard Craster and Habib Ammari for their helpful suggestions. The authors declare that they have no competing interests and that they have not used AI-assisted technologies in creating this article.}

\printbibliography

\end{document}